\DeclareFontFamily{OMS}{rsfs}{\skewchar\font'60}
\DeclareFontShape{OMS}{rsfs}{m}{n}{<-5>rsfs5 <5-7>rsfs7 <7->rsfs10 }{}
\DeclareSymbolFont{rsfs}{OMS}{rsfs}{m}{n}
\DeclareSymbolFontAlphabet{\scr}{rsfs}
\newtheorem{theorem}{Theorem}[section]
\newtheorem{lemma}[theorem]{Lemma}
\newtheorem{proposition}[theorem]{Proposition}
\newtheorem{corollary}[theorem]{Corollary}
\theoremstyle{definition}
\newtheorem{definition}[theorem]{Definition}
\newtheorem{example}[theorem]{Example}
\theoremstyle{remark}
\newtheorem{remark}[theorem]{Remark}
\newcommand{\sH}{\scr{H}}
\newcommand{\bN}{\mathbb{N}}
\newcommand{\N}{\mathbb {N}}
\newcommand{\bQ}{\mathbb{Q}}
\newcommand{\bR}{\mathbb{R}}
\DeclareMathOperator{\Spec}{{Spec}}
\DeclareMathOperator{\End}{{End}}
\newcommand{\blank}{\underline{\hskip 10pt}}
\newcommand{\sF}{\scr{F}}
\newcommand{\sG}{\scr{G}}
\DeclareMathOperator{\Supp}{{Supp}}
\DeclareMathOperator{\Sing}{{Sing}}
\newcommand{\sM}{\scr{M}}
\newcommand{\sL}{\scr{L}}
\DeclareMathOperator{\Hom}{Hom}
\DeclareMathOperator{\Conv}{Conv}
\DeclareMathOperator{\sHom}{{\sH}om}
\DeclareMathOperator{\id}{{id}}
\DeclareMathOperator{\Div}{{div}}
\DeclareMathOperator{\Frac}{{Frac}}
\newcommand{\tensor}{\otimes}
\newcommand{\bF}{\mathbb{F}}
\newcommand{\bm}{\mathfrak{m}}
\newcommand{\ba}{\mathfrak{a}}
\newcommand{\bb}{\mathfrak{b}}
\newcommand{\tld}{\widetilde }
\newcommand{\sI}{\scr{I}}
\newcommand{\mJ}{\mathcal{J}}
\DeclareMathOperator{\exc}{{exc}}
\newcommand{\sJ}{\scr{J}}
\newcommand{\cf}{{\itshape cf.} }
\newcommand{\ie}{{\itshape i.e.} }
\DeclareMathOperator{\Newt}{Newt}
\DeclareMathOperator{\relint}{{relint}}
\theoremstyle{theorem}
\newtheorem*{mainthm}{Main Theorem}
\newtheorem*{theorem*}{Theorem}
\newtheorem*{corollary*}{Corollary}
\renewcommand{\O}{\mathcal O}
 \title{Cartier modules on toric varieties}
\author{Jen-Chieh Hsiao, Karl Schwede, Wenliang Zhang}
\thanks{This research was initiated at the Commutative Algebra MRC held in June 2010.  Support for this meeting was provided by the NSF and AMS.}
\thanks{The first author was partially supported by the NSF grant DMS \#0901123}
\thanks{The second author was supported by an NSF postdoctoral fellowship and also by NSF grant DMS \#1064485.}
\thanks{The third author was partially supported by the NSF grant DMS \#1068946.}
\subjclass[2000]{14M25, 13A35, 14F18, 14B05}
\address{\vskip -.65cm \noindent  Jen-Chieh Hsiao: \sf Department of Mathematics,
  Purdue University.  West Lafayette, Indiana 47907}
\email{jhsiao@math.purdue.edu}
\address{\vskip -.65cm \noindent Karl Schwede: \sf Department of Mathematics,
  The Pennsylvania State University.  University Park, Pennsylvania 16802}
\email{schwede@math.psu.edu}
\address{\vskip -.65cm \noindent Wenliang Zhang: \sf Department of Mathematics,
  University of Michigan.  Ann Arbor, Michigan 48109-1109}
\email{wlzhang@umich.edu}
\begin{document}
\maketitle

\begin{abstract}
Assume that $X$ is an affine toric variety of characteristic $p > 0$. Let $\Delta$ be an effective toric $\bQ$-divisor such that $K_X+\Delta$ is $\mathbb{Q}$-Cartier with index not divisible by $p$ and let $\phi_{\Delta}:F^e_*\O_X\to \O_X$ be the toric map corresponding to $\Delta$. We identify all ideals $I$ of $\O_X$ with $\phi_{\Delta}(F^e_* I)=I$ combinatorially and also in terms of a log resolution (giving us a version of these ideals which can be defined in characteristic zero).  Moreover, given a toric ideal $\ba$, we identify all ideals $I$ fixed by the Cartier algebra generated by $\phi_{\Delta}$ and $\ba$; this answers a question by Manuel Blickle in the toric setting.
\end{abstract}

\section{Introduction}

Suppose that $R$ is a ring of characteristic $p > 0$ and $F : R \to R$ is the Frobenius map which we always assume is a finite map.  If $\phi : R \to R$ is a splitting of Frobenius, then there are finitely many ideals $I$ such that $\phi(I) \subseteq I$, see \cite{KumarMehtaFiniteness,SchwedeFAdjunction}.  These ideals are called \emph{$\phi$-compatible} and are an interesting and useful collection of objects to study in their own right (they are closely related to the characteristic zero notion of ``log canonical centers'').

Much more generally, suppose that $R$ is a reduced ring and $\phi : R \to R$ is an additive map that satisfies the condition $\phi(r^{p^e} \cdot x) = r \phi(x)$ for all $r, x \in R$ (for example, a splitting of Frobenius).  In \cite{BlickleBoeckleCartierModulesFiniteness}, M. Blickle and G. B\"ockle generalize the above mentioned finiteness results and show that there are finitely many $I \subseteq R$ such that $\phi(I) = I$ (such ideals we call \emph{$\phi$-fixed}).  In \cite{BlickleTestIdealsViaAlgebras}, M. Blickle generalized the notion of $\phi$-fixed ideals to include the additional data of an ideal $\ba$ to a formal real power $t \geq 0$, in fact, he generalized these fixed ideals in even greater settings.  However, the full finiteness results still remain illusive.  Explicitly, it is natural to study ideals $I$ such that
\[
\sum_{n >0} \phi^n \left( \overline{\ba^{t(p^{n e}-1)}}\cdot I \right)=I
\]
where $\phi^n$ denotes the $e$th iterate of $\phi$ and $\overline{J}$ denotes the integral closure of an ideal.  We call such ideals \emph{$\phi, \ba^t$-fixed}.

These $\phi$-fixed ideals are exactly the ideals for which one has certain Fujita-type global generation statements \cite[Section 6]{SchwedeACanonicalLinearSystem}.  In particular, identifying these ideals might be very useful in the problem of lifting sections for projective varieties in characteristic $p > 0$.
However, very few examples of the sets of $\phi$-fixed ideals, let alone $\phi, \ba^t$-fixed ideals, are known.  In this paper we compute these ideals in the toric setting.  In other words, $X = \Spec k[S]$ is an affine
toric variety, $\phi:F^{e}_*\O_X\to \O_X$ is a toric map and $\ba$ is a monomial ideal. Here $S = M \cap \sigma$ for some lattice $M$ and some rational convex polyhedral cone $\sigma$ in $M_{\bR}$. Because $\phi$ is a toric map, we can write $\phi(\blank) = \phi_c(x^{-w}\cdot \blank)$ for some $w
\in M$ where $\phi_c$ is the canonical splitting\footnote{The canonical splitting is the map which sends a monomial $x^m$ to $x^{m/p^e}$ if $m/p^e$ is an integer, and otherwise sends $x^m$ to zero.} of $F^e:k[S]\to k[S]$.  Finally let $t = a/b$ be a rational number such that $p$ does not divide $b$. Set $P = t \Newt(\ba) $ and $\sF = \{ \text{ faces of }P \}$. For $\tau \in \sF$, denote $I_{\tau} :=  \langle  x^v | v \in \relint({w \over 1-p^{e}}+\tau')\cap S \text{ for some }\tau' \supseteq \tau \rangle.$
Our main result is as follows:

\begin{mainthm} [Theorem \ref{cartier-algebra-fixed-ideal}]
$\sum_{n >0} \phi^{n} \left( \overline{\ba^{t(p^{ne}-1)}}\cdot I \right)=I $ if and only if $I = \sum_{\tau \in \sG} I_{\tau}$ for some $\sG \subseteq \sF$, where the sum is taken over all positive integers $n > 0$ such that $t(p^{n e}-1)$ is an integer.
\end{mainthm}

As an immediate corollary, we obtain:

\begin{corollary*}
Suppose that $(k[S], \phi, \ba^t)$ is as above.  Then there are only finitely many ideals $I$ such that $\sum_{n >0} \phi^n \left( \overline{\ba^{t(p^{n e}-1)}}\cdot I \right)=I $ where we again sum over $n$ such that $t(p^{ne} - 1)$ is an integer.
\end{corollary*}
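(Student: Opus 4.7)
The plan is to deduce the corollary immediately from the Main Theorem. The Main Theorem provides a complete combinatorial classification: every $\phi,\ba^t$-fixed ideal $I$ is expressible in the form $I = \sum_{\tau \in \sG} I_\tau$ for some subset $\sG \subseteq \sF$, where $\sF$ is the collection of faces of the polytope $P = t\Newt(\ba)$. So the finiteness question reduces to a finiteness statement about the face lattice of $P$.

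First I would observe that, because $\ba$ is a monomial ideal in $k[S]$, its Newton polyhedron $\Newt(\ba)$ is a rational convex polyhedron with finitely many faces. Scaling by the rational number $t$ preserves this property, so $P = t \Newt(\ba)$ likewise has finitely many faces; that is, $\sF$ is a finite set. Consequently, the power set $2^{\sF}$ is finite.

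Next I would invoke the Main Theorem to conclude that the assignment $\sG \mapsto \sum_{\tau \in \sG} I_\tau$ is a surjection from the finite set $2^\sF$ onto the collection of $\phi,\ba^t$-fixed ideals. Surjectivity alone suffices for the corollary, so there is no need to analyze whether distinct subsets $\sG$ can give the same ideal. This yields the required finiteness statement.

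There is essentially no obstacle here: once the Main Theorem is available, the corollary is a one-line consequence of the finiteness of the face lattice of a rational polytope. The only point that even deserves mention is the justification that $P$ is genuinely a polytope (not an unbounded polyhedron) in the sense relevant for counting faces; but since only the face lattice matters and $\Newt(\ba)$ is a finite polyhedral complex whose faces correspond bijectively to those of its scaled version, the finiteness is automatic in either interpretation.
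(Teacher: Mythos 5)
Your proof is correct and is exactly the argument the paper has in mind: the paper states the corollary as an immediate consequence of the Main Theorem, and your one-line deduction (finitely many faces of the rational polyhedron $t\Newt(\ba)$, hence finitely many subsets $\sG$, hence finitely many ideals $\sum_{\tau\in\sG} I_\tau$) is the standard justification. The aside about polytope versus polyhedron is handled properly --- $\Newt(\ba)$ is unbounded, but a rational convex polyhedron still has only finitely many faces, which is all that is needed.
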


This answers a question by Manuel Blickle (\cite[Question 5.4]{BlickleTestIdealsViaAlgebras}) in the toric setting.  Even in the case when $t=0$ (we define $J^0=R$ for any ideal $J$) or $\ba=R$, i.e. we have a pair $(R,\phi)$ instead of a triple $(R,\phi,\ba^t)$, our result identifies all $\phi$-fixed ideals.  We illustrate our Main Theorem in this special case below; in this case $\sigma = \Newt(\ba)$.

In the following diagram, circles represent the monomials of the semi-group ring $k[S]$ and solid lines represent the boundaries of $\sigma$.  Given $\phi$ as above, we consider the vector $w \over 1- p^e $.

\begin{center}
{\scriptsize
\begin{tikzpicture}
\draw (0,0) -- (4,-2) (1,4)-- (0,0);
\draw (-0.2, -0.3) node{$x^0y^0$};
\draw (0,0) circle (0.3ex);
\draw (1,0) circle (0.3ex);
\draw (1,1) circle (0.3ex);
\draw (1,2) circle (0.3ex);
\draw (1,3) circle (0.3ex);
\draw (1,4) circle (0.3ex);
\draw (2,-1) circle (0.3ex);
\draw (2,0) circle (0.3ex);
\draw (2,1) circle (0.3ex);
\draw (2,2) circle (0.3ex);
\draw (2,3) circle (0.3ex);
\draw (2,4) circle (0.3ex);
\draw (3,-1) circle (0.3ex);
\draw (3,0) circle (0.3ex);
\draw (3,1) circle (0.3ex);
\draw (3,2) circle (0.3ex);
\draw (3,3) circle (0.3ex);
\draw (3,4) circle (0.3ex);
\draw (4,-2) circle (0.3ex);
\draw (4,-1) circle (0.3ex);
\draw (4,0) circle (0.3ex);
\draw (4,1) circle (0.3ex);
\draw (4,2) circle (0.3ex);
\draw (4,3) circle (0.3ex);
\draw (4,4) circle (0.3ex);
\draw (0, 0) -- (1.38, 0.5);
\draw (1.38, 0.5) -- (1.18, 0.3);
\draw (1.38, 0.5) -- (1.10, 0.53);
\draw (1.8, 0.5) node{$w \over 1- p^e $};
\draw (2.1, 3.25) node{$x^3y^2$};
\begin{scope}[very thick,dashed]
\draw (1.38, 0.5) -- (5.38, -1.5);
\draw (1.38, 0.5) -- (2.38, 4.5);
\end{scope}
\end{tikzpicture}
}
\end{center}

The $\phi$-fixed ideals will each be generated by monomials contained in the interior or boundary of the above  ``dotted'' region and we can explicitly identify them pictorially.
Explicitly, as our main theorem says, each of the $\phi$-fixed ideals will be generated by all monomials contained in one of the following shaded regions (in each region, the open circle corresponds to the point $w \over 1 - p^e$).

\begin{center}
\begin{tikzpicture}
\begin{scope}[dashed]
\draw (0, 0) -- (2, -1) (0,0) -- (0.5, 2) ;
\end{scope}
\draw (0,0) circle (0.5ex);
\fill[gray] (0.1, 0.1) -- (2.1, -0.9) -- (2.1, 2.1) -- (0.6, 2.1) -- cycle;
\draw (1, -2) node{I};
\draw (-0.3, -0.3) node{$w \over 1- p^e $};

\fill[gray] (2.6, 0.1) -- (4.6, -0.9) -- (4.6, 2.1) -- (3.0, 2.1) -- (2.55, 0.1) -- cycle;
\begin{scope}[dashed]
\draw (2.5, 0) -- (4.5, -1) ;
\end{scope}
\begin{scope}[very thick]
\draw (2.55,0.2) -- (3, 2);
\end{scope}
\draw (2.5,0) circle (0.5ex);
\draw (3.5, -2) node{II};

\fill[gray] (5.1, -0.05) -- (7.1, -1.05) -- (7.1, 2.1) -- (5.6, 2.1) -- cycle;
\begin{scope}[dashed]
\draw (5,0) -- (5.5, 2) ;
\end{scope}
\begin{scope}[very thick]
\draw (5.2, -0.1) -- (7, -1) ;
\end{scope}
\draw (5,0) circle (0.5ex);
\draw (6, -2) node{III};

\fill[gray] (7.7, -0.1) -- (9.7, -1.1) -- (9.7, 2.1) -- (8.0, 2.1) -- (7.5, 0.2) -- cycle;
\begin{scope}[very thick]
\draw (7.55,0.2) -- (8, 2);
\draw (7.7, -0.1) -- (9.5, -1) ;
\end{scope}
\draw (7.5,0) circle (0.5ex);
\draw (8.5, -2) node{IV};

\fill[gray] (10, 0) -- (12.2, -1.1) -- (12.2, 2.1) -- (10.5, 2.1) -- cycle;
\begin{scope}[very thick]
\draw (10,0) -- (10.5, 2);
\draw (10, 0) -- (12, -1) ;
\draw[fill=gray] (10,0) circle (0.5ex);
\draw (11, -2) node{V};
\end{scope}
\end{tikzpicture}
\end{center}
While each of the ideals associated with these different bodies are potentially different, in many cases (depending on the particular $w$), they are the same.

As we have already noted, in the case that $\phi$ is a Frobenius splitting, the $\phi$-fixed ideals are closely related to log canonical centers (a notion defined by using a resolution of singularities).  It is thus natural to ask if these ideals $I$ such that $\phi(I) = I$ are also related to a notion defined using a resolution of singularities.  At least in the toric setting, we identify a class of ideals, defined using a resolution of singularities which coincides with the $\phi$-fixed ideals $I$.
Our main result on relating $\phi$-fixed ideals and resolutions of singularities is the following.

\begin{theorem*}[Theorem \ref{adjoint-ideals-are-fixed}]
Let $X$ be an affine toric variety of characteristic $p>0$ and let $\Delta$ be an effective torus-invariant $\bQ$-divisor on $X$ such that $K_X+\Delta$ is $\bQ$-Cartier. Let $\ba$ be a toric ideal and $t$ a nonnegative rational number which can be written without $p$ in its denominator. Let $\phi:F^e_*\O_X\to \O_X$ be the toric map corresponding to $\Delta$. Then an ideal $I\subset \O_X$ satisfies $\sum_{n >0} \phi^n \left( \overline{\ba^{t(p^{ne}-1)}}\cdot I \right)=I $ if and only if there exists a toric log resolution $\pi:X'\to X$ of $(X,\Delta,\ba)$ such that $\ba\cdot \O_{X'}=\O_{X'}(-G)$ and an effective divisor $E$ on $X'$ with $\pi(E)\subset \Supp\Delta\cup \Sing X \cup V(\ba)$ such that
\[I=\pi_*\O_{X'}(\lceil K_{X'}-\pi^*(K_X+\Delta) -tG +\varepsilon E\rceil)\ for\ 1\gg \varepsilon >0. \]
\end{theorem*}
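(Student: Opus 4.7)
The plan is to combine the Main Theorem (Theorem \ref{cartier-algebra-fixed-ideal}), which classifies $\phi_\Delta,\ba^t$-fixed ideals as $I=\sum_{\tau\in\sG}I_\tau$ for $\sG\subseteq\sF$, with a direct combinatorial description of the right-hand side. Let $\alpha := w/(1-p^e)\in M_\bQ$ be the vector appearing in the Main Theorem. Since $X$ is affine toric and $K_X+\Delta$ is $\bQ$-Cartier, the support function of $K_X+\Delta$ is globally linear on $\sigma$; unwinding $\phi_\Delta=\phi_c\circ x^{-w}$ one computes that for any toric log resolution $\pi:X'\to X$ of $(X,\Delta,\ba)$ whose fan $\Sigma'$ refines both $\Sigma$ and the normal fan of $\Newt(\ba)$, the coefficient of a prime torus-invariant divisor $D_\rho$ in $K_{X'}-\pi^*(K_X+\Delta)-tG+\varepsilon E$ equals
\[-1-\langle\alpha,v_\rho\rangle-h_P(v_\rho)+\varepsilon e_\rho,\]
where $h_P$ is the support function of $P=t\Newt(\ba)$ and $E=\sum_\rho e_\rho D_\rho$.

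The standard formula $\chi^m\in\pi_*\O_{X'}(D)\iff \langle m,v_\rho\rangle+d_\rho\geq 0$ for every ray $\rho$ of $\Sigma'$, combined with a careful analysis of the floor of $1+\langle\alpha,v_\rho\rangle+h_P(v_\rho)-\varepsilon e_\rho$ for $0<\varepsilon\ll 1$, will show that $\chi^m$ lies in the pushforward if and only if $\langle m-\alpha,v_\rho\rangle>h_P(v_\rho)$ for every $\rho\notin\Supp E$ and $\langle m-\alpha,v_\rho\rangle\geq h_P(v_\rho)$ for every $\rho\in\Supp E$. Equivalently, $m-\alpha$ lies in the relative interior of some face $\tau'$ of $P$ whose normal cone $\sigma_{\tau'}\subseteq N_\bR$ contains only rays of $\Sigma'$ that belong to $\Supp E$. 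Setting $\sG(\pi,E):=\{\tau\in\sF:\sigma_\tau\cap\{v_\rho:\rho\in\Sigma'\}\subseteq\Supp E\}$, which is automatically up-closed (since $\tau\subseteq\tau'$ implies $\sigma_{\tau'}\subseteq\sigma_\tau$), one concludes that the pushforward equals $\sum_{\tau\in\sG(\pi,E)}I_\tau$, a $\phi_\Delta,\ba^t$-fixed ideal by the Main Theorem.

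For the converse, given a $\phi_\Delta,\ba^t$-fixed ideal $I=\sum_{\tau\in\sG}I_\tau$ with $\sG$ taken up-closed (which may always be arranged without changing the sum), I will realize it as a pushforward as follows. Choose a toric log resolution $\pi:X'\to X$ whose fan $\Sigma'$ refines $\Sigma$ and the normal fan of $P$ and which contains, in the relative interior of each cone $\sigma_\tau$ ($\tau\in\sF$), at least one of its own rays. Define $e_\rho=1$ precisely when $v_\rho\in\sigma_\tau$ for some $\tau\in\sG$, and $e_\rho=0$ otherwise. Then $\sG(\pi,E)=\sG$: the inclusion $\sigma_\tau\cap\Sigma'\subseteq\Supp E$ for $\tau\in\sG$ follows from up-closure, while for $\tau\notin\sG$ the distinguished interior ray of $\sigma_\tau$ lies in no $\sigma_{\tau'}$ with $\tau'\in\sG$ (such a containment would force $\tau'\subseteq\tau$ and hence $\tau\in\sG$), hence is not in $\Supp E$. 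The support condition $\pi(E)\subseteq\Supp\Delta\cup\Sing X\cup V(\ba)$ holds because the rays of $\Sigma'$ where $\langle\alpha,\cdot\rangle$ or $h_P$ behaves non-trivially lie over these loci.

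The main technical obstacle is the floor/ceiling analysis in the second paragraph: one must verify that at rays $\rho$ where $\langle\alpha,v_\rho\rangle+h_P(v_\rho)\in\bZ$, the perturbation $\varepsilon e_\rho>0$ with $\varepsilon$ sufficiently small decreases $\lfloor 1+\langle\alpha,v_\rho\rangle+h_P(v_\rho)-\varepsilon e_\rho\rfloor$ by exactly one, thereby relaxing the strict inequality to a non-strict one, while at other rays the floor is unaffected. A secondary difficulty is ensuring that the refinement $\Sigma'$ separates the normal cones finely enough for the bijection $\sG\leftrightarrow(\pi,E)$ to go through, which is handled by star-subdividing each $\sigma_\tau$ through a ray in its relative interior.
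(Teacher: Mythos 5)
Your approach is genuinely different from the paper's in both directions. For ``pushforward implies fixed,'' the paper's Theorem~\ref{adjoint-ideals-are-fixed} directly verifies $\sum_n \phi^n\bigl(\overline{\ba^{t(p^{en}-1)}}\cdot\mathcal J(E)\bigr)=\mathcal J(E)$ by a coefficient estimate that exploits the integrality of $\langle\widetilde u, v_i\rangle$, independently of any classification of fixed ideals; you instead compute the monomials of the pushforward directly and then invoke Theorem~\ref{cartier-algebra-fixed-ideal}. For ``fixed implies a pushforward,'' the paper's Theorem~\ref{fixed-ideals-are-adjoint-ideals} realizes $\sum_{F\in\sG}I_F$ as the non-LC ideal $\mJ'(X,\Delta,\ba^t\bb^\delta)$ of an auxiliary triple with $\bb$ a monomial ideal attached to $\sG$, and the support condition $\pi(E)\subseteq Z$ comes for free from Proposition~\ref{PropHowToConstructInterAdj}, which takes $E$ to be the non-klt locus; you instead build $\Sigma'$ and $E$ by hand from the normal fan of $P$. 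Your strategy is sound in outline, and the ceiling/floor analysis in the forward direction is correct.

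The concrete gap is in the converse, specifically in verifying $\pi(E)\subseteq Z$. You define $e_\rho=1$ whenever $v_\rho\in\sigma_\tau$ for some $\tau\in\sG$, and you dismiss the support constraint with ``the rays where $\langle\alpha,\cdot\rangle$ or $h_P$ behaves non-trivially lie over these loci.'' But the dangerous rays are precisely those where those functions behave \emph{trivially}: the strict transform of a prime divisor $D_{\rho_0}\not\subseteq Z$ has $\langle\alpha,v_{\rho_0}\rangle=-1$ and $h_P(v_{\rho_0})=0$, yet such a ray can lie in $\sigma_\tau$ for some $\tau$ in an up-closed $\sG$. For example, on $\bA^2$ with $\Delta=\tfrac32 D_1$, $p^e=3$, every $I_F$ equals $(x)$, so $\sG=\sF$ is a legitimate up-closed choice with $\sum_{F\in\sG}I_F=I$; then your $E$ contains the strict transform of $D_2\not\subseteq Z$, which both violates $\pi(E)\subseteq Z$ and, since $\lceil -1-x_{\rho_0}+\varepsilon\rceil=1>0$, makes the pushforward strictly fractional (here $xy^{-1}\O_X\neq (x)$). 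The missing observation is that if $\sigma_\tau$ meets such a bad ray $v_{\rho_0}$, then $\relint\bigl(\tfrac{w}{1-p^e}+\tau\bigr)\cap S=\emptyset$ (any such $m$ would have $\langle m,v_{\rho_0}\rangle=-1$, impossible for $m\in S$); hence $\tau$ contributes nothing and may be discarded from $\sG$, after which the resulting $E$ satisfies $\pi(E)\subseteq Z$ and the pushforward is an honest ideal. In short, you must take $\sG$ to be the set of faces whose shifted relative interior actually meets $S$ (as in the proof of Theorem~\ref{cartier-algebra-fixed-ideal}), not an arbitrary up-closed $\sG$, and you must supply the argument above rather than the asserted hand-wave.
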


This result perhaps should not be unexpected, we explain the motivation for this result in the special case that $\ba = R$.  Note that $\phi$ extends to a map $\phi' : F^{e}_* \sM \to \sM$ where $\sM = \O_{X'}(\lceil K_{X'}-\pi^*(K_X+\Delta) + E\rceil)$ for every effective $\bQ$-divisor $E$ on $X'$, \cite[Proof \#2 of Theorem 3.3]{HaraWatanabeFRegFPure}, \cite[Theorem 6.7]{SchwedeCentersOfFPurity}.  Choose now $1 \gg \varepsilon > 0$, and it follows that the fractional ideal
\[
\sI = \O_{X'}(\lceil K_{X'}-\pi^*(K_X+\Delta) + \varepsilon E\rceil)
\]
is $\phi'$-fixed.  The ideals discussed in Theorem \ref{adjoint-ideals-are-fixed} are thus exactly the push-forwards of $\phi'$-fixed fractional ideals whose pushforwards are honest (non-fractional) ideals of $\O_X$.

\vskip 6pt
\noindent
{\it Acknowledgements: }

The authors began work on this project during the Commutative Algebra MRC held in June 2010 in Snowbird Utah, organized by David Eisenbud, Craig Huneke, Mircea Musta{\c{t}}{\u{a}}, and Claudia Polini.  We would like to thank Mircea Musta{\c{t}}{\u{a}}, Lance Miller, and Julian Chan for numerous valuable discussions and encouragement at the MRC.  We also thank Manuel Blickle several discussions related to these ideas and in particular discussions clarifying Example \ref{exInfinitelyManyIdealsEventually}.  Finally we thank the referee for some quite helpful suggestions.

\section{Preliminaries for fixed ideals}
\label{secPreliminariesForFixed}

Throughout this paper, all rings are Noetherian and excellent and possess dualizing complexes and all schemes are separated.  Mostly we will work in the setting of toric varieties where all these conditions are automatic.

In this section, $R$ is a normal $F$-finite domain of characteristic $p > 0$ which admits an additive map $\phi:  R \rightarrow R$ satisfying the relation that
$\phi(r^{p^e}x) = r \phi(x)$ for all $r,x \in R$. Such a $\phi$ is called a $p^{-e}$-linear map.
Typical examples of such maps are the maps that split the Frobenius map $F: R \rightarrow R$.  It can be difficult to distinguish the source and target of this map.  Thus for any $R$-module $M$, we use $F^e_* M$ to denote the $R$-module isomorphic to $R$ as an Abelian group but with the $R$-action, $r.m = r^{p^e} m$.  From this perspective, a $p^{-e}$-linear map is simply an $R$-linear map $F^e_* R \rightarrow R$.  For the rest of the paper, all $p^{-e}$-linear maps $\phi : R \to R$ will be written as $R$-linear maps $\phi : F^e_* R \to R$.

Once a map $\phi : F^e_* R \to R$ has been fixed, $R$ fits into the theory of Cartier modules developed by Blickle and B\"ockle \cite{BlickleBoeckleCartierModulesFiniteness}.
One of the main theorems in \cite{BlickleBoeckleCartierModulesFiniteness} guarantees that there are only finitely many ideals $I$ of $R$ satisfying $\phi(F^e_* I)=I$.
We will call such an $I$ a \emph{$\phi$-fixed ideal} (according to the terminology of \cite{BlickleTestIdealsViaAlgebras}, these ideals are also called \emph{$F$-pure Cartier-submodules of $(R, \phi)$}).  Moreover, the theory of Cartier modules in \cite{BlickleBoeckleCartierModulesFiniteness} was generalized in \cite{BlickleTestIdealsViaAlgebras} which we will review briefly.

\subsection{Cartier Algebras}

\begin{definition}[\cite{SchwedeTestIdealsInNonQGor}, \cf \cite{BlickleTestIdealsViaAlgebras}]
Let $R$ be commutative Noetherian ring of characteristic $p>0$. An \emph{algebra of $p^{-e}$-linear maps on $R$}, or simply a \emph{Cartier algebra on $R$} is an $\mathbb{N}$-graded ring $\mathcal{C}:=\bigoplus_{e\geq 0}\mathcal{C}_e$ such that each $\phi \in \mathcal{C}_e$ is a map $\phi : F^e_* R \to R$.  Furthermore, multiplication of elements of $\mathcal{C}$ corresponds to composition of maps.  In other words, for $\phi \in \mathcal{C}_e$ and $\psi \in \mathcal{C}_d$, we define:
\[
\phi \cdot \psi := \phi \circ (F^e_* \psi).
\]
We call the pair $(R, \mathcal{C})$ \emph{$F$-pure} if there exists a surjective $\phi \in \mathcal{C}_e$ for some $e > 0$.
\end{definition}

If one uses $\End_e(R)$ to denote $\Hom_R(F^e_*R,R)$, then $\mathcal{C}_R:=\bigoplus_e\End_e(R)$ will be an example of an $R$-Cartier-algebra. In this paper, we will focus on the subring determined by an ideal $\ba$, a nonnegative real number $t$ and a $p^{-e}$-linear map $\phi$, denoted by $\mathcal{C}^{\phi, \ba^t}$, \ie
\[
\mathcal{C}^{\phi, \ba^t}:=  \bigoplus_n \phi^n\cdot F^{ne}_* \overline{\ba^{t(p^{ne}-1)}} = \bigoplus_{n \geq 0} \mathcal{C}^{\phi, \ba^t}_{n},
\]
where the sum is taken over all $n$ such that $t(p^{ne}-1)$ is an integer.  As mentioned before, $\overline{J}$ is used to denote the integral closure of $J$, see \cite{HunekeSwansonIntegralClosure}.  If $\ba = R$ or $t = 0$, then we use $\mathcal{C}^{\phi}$ to denote $\mathcal{C}^{\phi, \ba^t}$.

Furthermore, we define
\[
\mathcal{C}^{\phi, \ba^t}_{+} := \bigoplus_{n > 0} \mathcal{C}^{\phi, \ba^t}_n.
\]
Finally, we state the definition of our main object of study in this paper.

\begin{definition}
If an ideal $I$ satisfies $\mathcal{C}^{\phi, \ba^t}_+(I)=I$, \ie $\sum_{n > 0} \phi^{n} (\overline{F^{ne}_* \ba^{t(p^{ne}-1)}}\cdot I) = I$, then $I$ is called $\mathcal{C}^{\phi, \ba^t}$-fixed. According to the terminology of \cite{BlickleTestIdealsViaAlgebras}, these ideals are also called \emph{$F$-pure Cartier-submodules of the triple $(R, \phi,\ba^t)$}.
\end{definition}

Note that when either $t=0$ or $\ba=R$, this Cartier-algebra $\mathcal{C}^{\phi, \ba^t}$ is the sub-Cartier-algebra of $\mathcal{C}_R$ generated by $\phi$; we will denote this sub-Cartier-algebra by $\mathcal{C}^{\phi}$. In this special case, one of the main theorems in \cite{BlickleBoeckleCartierModulesFiniteness} guarantees that there are only finitely many ideals $I$ of $R$ satisfying $\phi(F^l_*I)=I$ ( \ie $\mathcal{C}^{\phi}_+(I)=I$). However, the finiteness of $\mathcal{C}^{\phi, \ba^t}$-fixed ideals was left as an open question in \cite[Question 5.4]{BlickleTestIdealsViaAlgebras}, to which our main theorem (Theorem \ref{cartier-algebra-fixed-ideal}) gives a partial positive answer.

\subsection{Fixed ideals}

Fixed ideals have appeared throughout commutative algebra and representation theory.  Indeed, if $\phi : F_* R \to R$ is a splitting of Frobenius, then a \emph{compatibly-$\phi$-split ideal} $J \subseteq R$ is an ideal such that $\phi(F_* J) = J$.  Alternately, note there is always an $R$-linear map $\phi^e : F_*^e \omega_R \to \omega_R$ for each $e > 0$ -- the Grothendieck trace map.  If $R$ is Gorenstein and local, then $\omega_R \cong R$ and thus we obtain a a canonical $p^{-e}$-linear map $\phi^e : F^e_* R \rightarrow R$.
In this case, the smallest non-zero ideal $J$ such that $\phi^e(F^e_*J)=J$ is the (big) test ideal of $R$, \cite{SchwedeFAdjunction}.  Furthermore, if $\phi$ is surjective, the largest such proper ideal is the splitting prime of Aberbach and Enescu, \cite{AberbachEnescuStructureOfFPure}.  In this section, we develop the theory of $\mathcal{C}^{\phi, \ba^t}$-fixed ideals.

We mention the following results about the set of $\phi$-fixed ideals which we will need.
\begin{proposition}
\label{propPropertiesOfFixedIdeals}
Suppose that $(R,\phi,\ba^t)$ is a triple as above with $R$ an $F$-finite domain\footnote{We make this hypothesis only for simplicity, most of what follows below can be generalized outside of this setting with minimal work.}.
\begin{itemize}
\item[(i)]  The set of $\mathcal{C}^{\phi, \ba^t}$-fixed ideals of $R$ is closed under sum.
\item[(ii)]  If $\phi$ is surjective, then the set of $\phi$-fixed ideals is closed under intersection.
\item[(iii)]  If $\phi$ is surjective, then any $\phi$-fixed ideal is a radical ideal.
\item[(iv)]  If $R$ is a normal domain and $\phi$ corresponds to a $\bQ$-divisor $\Delta_{\phi}$ as in subsection \ref{subsecnRelationBetweenMapsAndDivisors} below, then the test ideal\footnote{The reader may take this to be the definition of the test ideal if they are not already familiar with it.} $\tau(R, \Delta_{\phi},\ba^t)$ is the unique smallest non-zero $\mathcal{C}^{\phi, \ba^t}$-fixed ideal of $R$.
\item[(v)]  There are finitely many $\phi$-fixed ideals\footnote{As our main result shows, in the toric setting there are also finitely many $\mathcal{C}^{\phi, \ba^t}$-fix ideals.}.
\item[(vi)]  For an element $d \in R$, define a new map $\psi(\blank) = \phi(F^e_*d^{p^e - 1} \cdot \blank)$.  Then an ideal $J \subseteq R$ is $\mathcal{C}^{\phi, \ba^t}$-fixed if and only if $d J$ is $\mathcal{C}^{\psi,\ba^t}$-fixed.
\end{itemize}
\end{proposition}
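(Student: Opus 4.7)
The plan is to handle the six parts essentially independently, since they are standard structural facts whose proofs in this setting are short, mostly following by citation or direct computation; the only parts requiring real argument are (iii) and (vi).

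For (i), I would observe that each $\phi^n$ is additive and $\mathcal{C}^{\phi,\ba^t}_+$ is built from these, so $\mathcal{C}^{\phi,\ba^t}_+(I_1 + I_2) = \mathcal{C}^{\phi,\ba^t}_+(I_1) + \mathcal{C}^{\phi,\ba^t}_+(I_2) = I_1 + I_2$. For (iv) and (v) I would just cite the relevant statements: (v) is \cite{BlickleBoeckleCartierModulesFiniteness}, and (iv) is the characterization of the test ideal as the smallest nonzero object fixed by the Cartier subalgebra generated by $\phi$ together with $\overline{\ba^{t(p^{ne}-1)}}$, which appears in \cite{BlickleTestIdealsViaAlgebras, SchwedeTestIdealsInNonQGor}.

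For (iii), the approach is the classical radicalization trick. Given $f \in \sqrt{I}$ with $I$ a $\phi$-fixed ideal, choose $n$ large so that $f^{p^{ne}} \in I$. Iterating $\phi$ gives a surjective map $\phi^n \colon F^{ne}_* R \to R$ (surjectivity is preserved under composition), so choose $x \in R$ with $\phi^n(F^{ne}_* x) = 1$. Then by the $p^{-ne}$-linearity
\[
f \;=\; f \cdot \phi^n(F^{ne}_* x) \;=\; \phi^n(F^{ne}_* f^{p^{ne}} x) \;\in\; \phi^n(F^{ne}_* I) \;=\; I,
\]
as $f^{p^{ne}} x \in I$. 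Part (ii) then drops out: the containment $\phi(F^e_*(I_1 \cap I_2)) \subseteq I_1 \cap I_2$ is clear, and for the reverse, given $f \in I_1 \cap I_2$ the same surjectivity argument writes $f = \phi(F^e_* f^{p^e} x)$ with $f^{p^e} x \in I_1 \cap I_2$; combined with the radicality established in (iii) one sees $\phi(F^e_*(I_1 \cap I_2)) = I_1 \cap I_2$.

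For (vi), the key is the iteration identity $\psi^n(F^{ne}_* x) = \phi^n(F^{ne}_* d^{p^{ne}-1} x)$, proved by induction using $p^{-e}$-linearity of $\phi$ (the exponents telescope as $(p^e-1)(1+p^e+\cdots+p^{(n-1)e}) = p^{ne}-1$). Applied to the ideal $dJ$, this gives
\[
\mathcal{C}^{\psi,\ba^t}_+(dJ) \;=\; \sum_{n>0} \phi^n\!\left(F^{ne}_* \overline{\ba^{t(p^{ne}-1)}}\, d^{p^{ne}}\, J\right) \;=\; d\cdot \mathcal{C}^{\phi,\ba^t}_+(J),
\]
where the last equality uses the $p^{-ne}$-linearity of $\phi^n$ to pull out $d$. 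Since $R$ is a domain, cancelling $d$ shows that $dJ$ is $\mathcal{C}^{\psi,\ba^t}$-fixed if and only if $J$ is $\mathcal{C}^{\phi,\ba^t}$-fixed. The only delicate point here is bookkeeping the integral closure $\overline{\ba^{t(p^{ne}-1)}}$ through the computation, but this is inert under multiplication by the element $d^{p^{ne}}$ on the submodule $J$, so no real obstacle arises. Overall the main conceptual step is the surjectivity-plus-radical trick in (iii); the rest is formal.
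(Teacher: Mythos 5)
Your proof is correct and follows the same route as the paper's (which is quite terse, mostly deferring to "easy" or to citations): part (i) by additivity, (ii) and (iii) by the standard trick that surjectivity of $\phi$ yields $J = \phi(F^e_* J)$ from $\phi(F^e_*J)\subseteq J$ via $1 = \phi(F^e_* x)$, (iv) and (v) by citation, and (vi) by the telescoping exponent identity $\psi^n(F^{ne}_*\,\cdot\,) = \phi^n(F^{ne}_* d^{p^{ne}-1}\,\cdot\,)$ followed by pulling $d^{p^{ne}}$ out as $d$. One tiny point worth making explicit, which both you and the paper leave tacit: part (vi) requires $d \neq 0$ (so that $d$ can be cancelled in the domain $R$); for $d=0$ the statement fails since $\psi = 0$ makes the zero ideal the only $\mathcal{C}^{\psi,\ba^t}$-fixed ideal regardless of $J$.
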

\begin{proof}
Part (i) is trivial from the definition.  Part (ii) follows from the observation that if $\phi$ is surjective, then any ideal $J$ satisfying the condition $\phi( F^e_*J) \subseteq J$ is automatically $\phi$-fixed, and the set of these ideals is closed under intersection.  Part (iii) is again easy.  Part (iv) is an easy exercise based upon \cite{SchwedeTestIdealsInNonQGor}.
Part (v) is, as mentioned, one of the main results of \cite{BlickleBoeckleCartierModulesFiniteness}.

To prove (vi), suppose first that $J$ is $\mathcal{C}^{\phi, \ba^t}$-fixed so that $\sum_{n>0}\phi^n(\overline{\ba^{t(p^{ne}-1)}} J) = J$.  Then
\begin{align}
\sum_{n>0}\psi^n(\overline{\ba^{t(p^{ne}-1)}} dJ) &= \sum_{n>0}\phi^n(F^{ne}_*d^{p^{ne}-1}\overline{\ba^{t(p^{ne}-1)}}d J)\notag\\
&=\sum_{n>0}\phi^n(F^{ne}_*d^{p^{ne}}\overline{\ba^{t(p^{ne}-1)}} J)\notag\\
&=d\sum_{n>0}\phi^n(\overline{\ba^{t(p^{ne}-1)}}J)\notag\\
&=dJ\notag
\end{align}
The converse statement merely reverses this.
\end{proof}

\begin{remark}
We will see in the toric setting that the set of $\phi$-fixed ideals is closed under intersection for any $\phi$.  It would be interesting to discover if this holds more generally.
\end{remark}

\begin{remark}
\label{remEarlyInfinitelyManyFixedIdeals}
Suppose that $R$ is a normal domain and $\phi : F^e_* R \to R$ is an $R$-linear map as in Proposition \ref{propPropertiesOfFixedIdeals}.  One can always extend $\phi$ to a map $\bar{\phi} : F^e_* K(R) \to K(R)$ where $K(R)$ is the fraction field of $R$.  While it is true that there are only finitely many $\phi$-fixed ideals of $R$, in general there are infinitely many $\phi$-fixed fractional ideals of $K(R)$.  For example, consider the ring $R = k[x]$ with the canonical splitting $\phi_c : F_* R \to R$.  Then the fractional ideal generated by $1 \over x+1$ is $\phi_c$-fixed.  Indeed, a generating set of this fractional ideal over $R^p$ is $\left\{ {x^i \over x+1} \right\}_{0 \leq i \leq p-1}$.  Notice that
\[
\xymatrix{
\phi_c\left({x^i \over x+1}\right) = \phi_c\left({(x+1)^{p-1} x^i \over (x+1)^p}\right) = {1 \over x+1} \phi_c\left(x^{p-1 + i} + {p-1 \choose 1 } x^{p-1 + i -1} + \dots + x^i\right) = b{x^{\lceil i / p \rceil} \over x+1}
}
\]
for some constant $b$, which proves the claim.  Clearly the fractional ideal generated by $1 \over x+1$ is not toric.  Of course, the same statement holds for the fractional ideal generated by $1 \over x+\lambda$ for any $\lambda \in k$ as well as for many other ideals.

\end{remark}

We now give a method for constructing $\phi$-fixed ideals.  While we will not use it directly, an analog of this result for ideals defined using resolution of singularities is the key observation which allows us to characterize $\phi$-fixed ideals via a resolution, compare with Proposition \ref{PropHowToConstructInterAdj}.
We first recall that given $\phi : F^e_* R \to R$, we can compose $\phi$ with itself to obtain a map $\phi^2 = \phi \circ F^e_* \phi : F^{2e}_* R \to R$ and similarly construct $\phi^n$ for any positive integer $n$.  While \emph{a priori}, we may have an infinite descending chain of ideals $\phi^n(F^{ne}_* R) \supseteq \phi^{n+1}(F^{(n+1)e}_* R) \supseteq \dots$; it is a theorem of Gabber that this chain eventually stabilizes, see \cite{Gabber.tStruc} (also see  \cite{BlickleTestIdealsViaAlgebras} for a generalization and \cite{HartshorneSpeiserLocalCohomologyInCharacteristicP} for the local dual statement in the geometric setting).
\begin{definition}
The stable image $\phi^n(F^{ne}_* R) = \phi^{n+1}(F^{(n+1)e}_* R) = \dots$ will be denoted by $S(\phi) \subseteq R$.  it is automatically $\phi$-fixed (and it is by construction, the largest $\phi$-fixed ideal).
\end{definition}
In \cite{FujinoSchwedeTakagiSupplements}, $S(\phi)$ was denoted by $\sigma(\phi)$.  We avoid this notation because we already are utilizing $\sigma$.

\begin{proposition}
\label{propWayToConstructFixedIdeals}
Suppose that $R$ is a domain and $d \in R$ is a non-zero element.  Then for any $n \gg 0$, if we define a map $\psi_n : F^{ne}_* R \to R$ by the formula $\psi_n(\blank) = \phi^n(F^{ne}_* d \cdot \blank)$, we have that $S(\psi_n)$ is $\phi$-fixed.
\end{proposition}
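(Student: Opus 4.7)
The plan is to exhibit $S(\psi_n)$ as the stable term of an \emph{increasing} chain of ideals indexed by $n$, and then to exploit two simultaneous fixedness identities in order to peel off a single factor of $\phi$.

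First, I would derive a closed form for the iterates $\psi_n^m\colon F^{mne}_* R \to R$. A straightforward induction on $m$, repeatedly using the $R$-linearity identity $d \cdot \phi^k(F^{ke}_* z) = \phi^k(F^{ke}_* d^{p^{ke}} z)$ to push each new copy of $d$ through the interior of the composition, yields
\[
\psi_n^m(F^{mne}_* x) = \phi^{mn}\bigl(F^{mne}_*(d^{N_{m,n}} x)\bigr), \qquad N_{m,n} := 1 + p^{ne} + \cdots + p^{(m-1)ne} = \frac{p^{mne}-1}{p^{ne}-1}.
\]
Combined with Gabber's stabilization theorem (recalled just before the proposition), this gives $S(\psi_n) = \phi^{mn}(F^{mne}_* d^{N_{m,n}} R)$ once $m$ is large enough (depending on $n$).

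Next I would prove that $S(\psi_n) \subseteq S(\psi_{n+1})$ for every $n \ge 1$. Fix $n$ and choose a positive integer $k$ large enough that $m := k(n+1)$ and $m' := kn$ each exceed the stability thresholds of $\psi_n$ and $\psi_{n+1}$ respectively; set $M := mn = m'(n+1) = kn(n+1)$. Then
\[
S(\psi_n) = \phi^M\bigl(F^{Me}_* d^{N_{m,n}} R\bigr), \qquad S(\psi_{n+1}) = \phi^M\bigl(F^{Me}_* d^{N_{m',n+1}} R\bigr).
\]
A direct computation gives $N_{m,n} = (p^{Me}-1)/(p^{ne}-1) > (p^{Me}-1)/(p^{(n+1)e}-1) = N_{m',n+1}$, so $d^{N_{m,n}} R \subseteq d^{N_{m',n+1}} R$ and hence $S(\psi_n) \subseteq S(\psi_{n+1})$. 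Since $R$ is Noetherian, this ascending chain of ideals stabilizes: there exists $n_0$ such that $J := S(\psi_n)$ is independent of $n$ for all $n \ge n_0$.

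Finally, for each $n \ge n_0$ the ideal $J$ is simultaneously $\psi_n$-fixed and $\psi_{n+1}$-fixed, which unpacks to $\phi^n(F^{ne}_* dJ) = J = \phi^{n+1}(F^{(n+1)e}_* dJ)$. Rewriting $\phi^{n+1} = \phi \circ F^e_* \phi^n$ in the second equation and substituting the first yields
\[
J = \phi^{n+1}\bigl(F^{(n+1)e}_* dJ\bigr) = \phi\bigl(F^e_* \phi^n(F^{ne}_* dJ)\bigr) = \phi(F^e_* J),
\]
so $J$ is $\phi$-fixed, as required. The main obstacle I anticipate is the bookkeeping in the middle paragraph: the stability thresholds for $\psi_n$ and $\psi_{n+1}$ both depend on $n$, so one has to choose the auxiliary multiplier $k$ large enough to dominate both at once; once that is arranged the comparison of exponents $N_{m,n}$ and $N_{m',n+1}$ is a one-line calculation and the rest of the argument is formal.
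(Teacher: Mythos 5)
Your proof is correct and takes essentially the same approach as the paper's: both establish the monotonicity $S(\psi_n) \subseteq S(\psi_{n+1})$ by comparing exponents of $d$ in the iterates, then exploit stabilization of that chain to peel off a single $\phi$. The only cosmetic difference is that your final step unpacks the two fixedness identities directly and factors $\phi^{n+1} = \phi \circ F^e_*\phi^n$, which is a somewhat leaner route to $\phi(F^e_* J) = J$ than the chain of rewritings the paper uses, but the underlying mechanism is identical.
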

\begin{proof}
We first claim that  $S(\psi_n) \subseteq S(\psi_{n+1})$ for any $n$ (compare also with \cite[Proposition 14.10(1)]{FujinoSchwedeTakagiSupplements}).  To prove this claim, first notice that for any $\alpha : F^e_* R \to R$, $S(\alpha) = S(\alpha^m)$.  Thus, in order to show the claim for $\psi$, it suffices show that $\psi_n^{n+1}(F^{(n+1)ne}_* J) \subseteq \psi_{n+1}^n(F^{(n+1)ne}_* J)$ for every ideal $J$.  However, because
\[
1 + p^{ne} + \dots + p^{n^2e} = {p^{(n+1)ne} - 1 \over p^{ne} - 1 } \geq { p^{n(n+1)e} - 1 \over p^{(n+1)e} - 1 }= 1 +p^{(n+1)e} + \dots + p^{(n-1)(n+1)e}
\]
we obtain that
\[
\begin{split}
\psi_n^{n+1}(F^{(n+1)ne}_* J) = \phi^{n(n+1)}(F^{(n+1)ne}_* d^{1 + p^{ne} + \dots + p^{n^2e} } J) \\
\subseteq \phi^{n(n+1)}(F^{(n+1)ne}_* d^{ 1 + p^{(n+1)e} + \dots + p^{ (n-1)(n+1)e }} J) = \psi_{n+1}^{n}(F^{(n+1)ne}_* J)
\end{split}
\]
which proves the claim.

We choose $n$ which stabilizes this chain $S(\psi_n) = S(\psi_{n+1})$.  Suppose now $m > 0$ is such that $S(\psi_n) = \psi_n^m(F^{m(ne)}_* R)$.
Then
\[
\begin{split}
\phi(S(\psi_n))\\
 = \phi(F^e_* \psi_n^{m+1}(F^{(m+1)ne}_* R))\\
 = \psi_{n+1}\left(F^{(n+1)e}_* ( \psi_n^m(F^{mne}_* R) ) \right) \\
= \psi_{n+1}\left( F^{(n+1)e}_* S(\psi_n) \right)\\
= \psi_{n+1}\left( F^{(n+1)e}_* S(\psi_{n+1}) \right) \\
= S(\psi_{n+1})\\
= S(\psi_n)
\end{split}
\]
which proves the proposition.
\end{proof}

\subsection{The relation between $\phi$ and $\bQ$-divisors.}
\label{subsecnRelationBetweenMapsAndDivisors}

Later, we will relate the $\phi$-fixed ideals with the ideals coming from a resolution of singularities in characteristic $0$ (e.g. multiplier ideals, Fujino's non-LC ideal, and the
ideals defining arbitrary unions of log canonical centers). This relation comes from a correspondence between pairs $(X=\Spec R,\Delta)$ and certain $p^{-e}$-linear maps $\phi:R
\rightarrow R$ in the theory of $F$-singularities. The reader is referred to \cite{SchwedeFAdjunction} for a detailed account of this correspondence, also see \cite{SchwedeTuckerTestIdealSurvey}.

Suppose that $(X, \Delta)$ is a pair where $X$ is a variety of finite type over an $F$-finite field $k$ such that $K_X + \Delta$ is $\bQ$-Cartier with index not divisible by
$p > 0$.  Further suppose that $\Delta$ is an effective $\bQ$-divisor such that $(p^e-1)\Delta$ is integral and $(p^e-1)(K_X+\Delta)$ is Cartier for some $e$.
Then there is a bijection of sets:
\begin{equation*}
\left\{
\begin{aligned}
&\text{Effective $\bQ$-divisors $\Delta$ on $X$ such }\\
&\text{that $(p^e -1)(K_X + \Delta)$ is Cartier}
\end{aligned}
\right\}
\longleftrightarrow
\left\{
\begin{aligned}
&\text{Line bundles $\sL$ and non-zero} \\
&\text{elements of $\Hom_{\O_X} (F^e_* \sL , \O_X )$}
\end{aligned}
\right\}
\bigg/ \sim
\end{equation*}
The equivalence relation on the right side identifies two maps $\phi_1 : F^e_* \sL_1 \rightarrow \O_X$ and
$\phi_2 : F^e_* \sL_2 \rightarrow \O_X$ if there is an isomorphism $\gamma : \sL_1 \rightarrow \sL_2$ and a commutative diagram:
\[\begin{CD}
F^e_* \sL_1  @>\phi_1 >> \O_X \\
@V F^e_* \gamma VV      @VV \id V \\
F^e_* \sL_2  @>\phi_2 >> \O_X
\end{CD}.\]
Given $\Delta$, set $\sL = \O_X ((1 -p^e )(K_X + \Delta))$. Then observe that
          \[  F^e_* \O_X ((p^e - 1)\Delta) \cong  F^e_* \sHom_{\O_X} (\sL , \O_X ((1 - p^e )K_X )) \cong \sHom_{\O_X} (F^e_* \sL , \O_X ).
          \]
The choice of a section $\eta \in \O_X ((p^e - 1)\Delta)$ corresponding to $(p^e-1)\Delta$ thus gives a map $\phi_{\Delta} : F^e_* \sL \to \O_X$.  The choice depends on various isomorphisms selected, but this is harmless for our purposes.

For the converse direction, an element $\phi \in \sHom_{\O_X}(F^e_* \sL, \O_X) \cong F^e_* \sL^{-1}((1-p^e)K_X)$ determines an $F^e_* \O_X$-linear map
\[ F^e_* \O_X \xrightarrow{1 \mapsto \phi} \sHom_{\O_X}(F^e_* \sL, \O_X) \xrightarrow{\sim} F^e_* \sL^{-1}((1-p^e)K_X) \]
which corresponds to an effective Weil divisor $D$ such that $\O_X(D) \cong \sL^{-1}((1-p^e)K_X)$. Set $\Delta_{\phi} = {1 \over p^e-1} D$.

\begin{remark}
\label{remTweakMapTweaksDivisor}
Explicitly, suppose $\phi : F^e_* R \to R$ is an $R$-linear map and $d \in R$.  Define a new map $\psi(\blank) = \phi(F^e_* d \cdot \blank)$.  Then $D_{\psi} = D_{\phi} + {1 \over p^e - 1} \Div(d)$.
\end{remark}

\begin{definition}
Suppose that $\phi : F^e_* R \to R$ corresponds to a divisor $\Delta$.  Then we define $S(R, \Delta)$ to be $S(\phi)$ where $S(\phi)$ is defined in the paragraph before Proposition \ref{propWayToConstructFixedIdeals}.
\end{definition}

We illustrate the above construction by the case where $X$ is an affine toric variety (which recall has trivial Picard group, so that every line bundle is isomorphic to $\O_X$)
Let $M$ be a lattice and $\sigma$ be a rational convex polyhedral cone in $M_{\bR}$. Set $R = k [\sigma \cap M]$.
Suppose $\Delta$ is an effective $\bQ$-divisor on $X = \Spec R$ as above. Then we can write
\[ (1-p^e)(K_X+\Delta) = \Div_X(x^w) \]
for some $w \in M$.
Then a map $\phi_{\Delta}$ corresponding to $\Delta$ can be expressed as
\[\phi_{\Delta}(\blank) = \phi_c(x^{-w} \cdot \blank)\]
where $\phi_c$ is the canonical splitting on $R$ defined by
\[\phi_c(x^v)= \begin{cases}
x^{v\over p^e}& \text{if ${v\over p^e} \in M$}, \\
0 & \text{otherwise.}
\end{cases}
\]

Let us explain these claims carefully since this identification is critical for what follows.
Our first claim is that the map $\phi_c$ corresponds to the torus invariant divisor $\Delta_c = -K_X$.  It is sufficient to show that $\phi_c$ fixes every height-one prime torus invariant ideal (which implies that $\Delta_{\phi_c}$ contains each torus invariant divisor as a component) and does not fix any other height-one ideal (which implies that $\Delta_{\phi_c}$ is toric).  While both these statements are well-known to experts, we point out that the first statement is simply \cite[Proposition 3.2]{PayneFrobeniusSplitToric} while the second is a very special case of the proof of Proposition \ref{ismonomial} below.   So now suppose that $\Delta$ is a torus invariant divisor such that $(1-p^e)(K_X + \Delta)$ is Cartier and thus is equal to $\Div_X(x^w)$.  Therefore \[
(1-p^e)(K_X + \Delta) = \Div_X(x^w) + 0 = \Div_X(x^w) + (1-p^e)(K_X + (-K_X))
\]
Dividing through by $(1-p^e)$ gives us $\Delta = (-K_X) + {1 \over p^e - 1}\Div_X(x^{-w})$.  However, it is easy to see that given any map $\beta : F^e_* \O_X \to \O_X$ and any element $d \in F^e_* \Frac R$, the map $\alpha(\blank) = \beta(d \cdot \blank)$, if it is indeed a map $\alpha : F^e_* \O_X \to \O_X$, has associated divisor $\Delta_{\alpha} = \Delta_{\beta} + {1 \over p^e - 1} \Div_X(d)$.  In other words, the map $\phi_{\Delta}$ as described above does indeed correspond to $\Delta$.

\subsection*{Notation.} For an $\bR$-Weil divisor $D=\sum^r_{j=1}d_jD_j$ such that $D_j$'s are distinct prime Weil divisors, we define
\[\lceil D\rceil=\sum^r_{j=1}\lceil d_j\rceil D_j\ {\rm and}\ \lfloor D\rfloor =\sum^r_{j=1}\lfloor d_j\rfloor D_j,\]
where for each real number $x$, the round-up (resp. round-down) $\lceil x\rceil$ (resp. $\lfloor x\rfloor$) denotes the integer defined by $x\leq \lceil x\rceil <x+1$ (resp. $x_1<\lfloor x\rfloor\leq x$). We also define
\[D^{\geq k}=\sum_{d_j\geq k}d_jD_j.\]

\section{$\mathcal{C}^{\phi, \ba^t}$-fixed ideals on Toric Varieties}
\label{secTestModulesonToricVarieties}

In this section, we compute the fixed ideals of certain Cartier algebras (in the sense of Blickle) on toric varieties.

We fix $M$ to be a lattice and $\sigma$ to be a rational convex polyhedral cone in $M_{\bR}$, set $S=\sigma \cap M$, $R=k[S]$, and $X=\Spec(k[S])$. Let $d$ denote the dimension of $X$, and let $\ba$ be a monomial ideal on $X$. Let $\Delta$ be a toric divisor on $X$ such that $(1-p^e)(K_X+\Delta)=\Div_X(x^w)$ for some positive integer $e$ and some element $w\in M$.

Denote $ \phi_c: F^e_* \O_X \rightarrow \O_X$ the canonical splitting on $X$.
Consider a $p^{-e}$-linear map $\phi_{\Delta}(\blank) = \phi_c(x^{-w} \blank)$ ($w$ is determined by $\Delta$; when $\Delta$ is clear we will simply write $\phi$), \ie
\[\phi(x^u)=\begin{cases}  x^{\frac{u-w}{p^e}} & {\rm if\ }\frac{u-w}{p^e}\in M\\ 0 & {\rm otherwise}\end{cases}\]

Given a rational number $t >0$ that can be written without $p$ in its denominator, we describe all the ideals $I$ fixed the Cartier algebra $\mathcal{C}^{\phi, \ba^t}$ generated by $\phi$ and $\ba^t$, \ie the ideals $I$ satisfying
\[ \sum_{n >0} \phi^n \left( \overline{\ba^{t(p^{ne}-1)}}\cdot I \right)=I \]
where the sum runs through all $n>0$ such that $t(p^{ne}-1)$ is an integer.

\begin{lemma}\label{propertiesNewt}
Let $R$ be a normal toric algebra, and let $I$, $J$  be monomial ideals of $R$.
Denote $\Newt(I)$ the Newton polytope of an ideal $I$.
Then
\begin{enumerate}
\item $\overline{I} = \langle x^v \mid v \in \Newt(I) \cap M \rangle$ \cite{FultonToric}.
\item $\Newt(I)+\Newt(J)=\Newt(IJ)$; in particular, $\overline{I^n} =
\langle x^v| v \in (n\Newt(I)) \cap M \rangle$ for any positive integer $n$. \item If $I = \langle x^g | g \in \Gamma \rangle$, then $\Newt(I) = \Conv(\Gamma)+\sigma$ where $\Conv(\Gamma)$ is the convex hull of $\Gamma$.
\end{enumerate}
\end{lemma}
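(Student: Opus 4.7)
The plan is to handle the three parts in order of logical dependence: part (3) first as a direct definition-chase, part (2) as a Minkowski-sum manipulation built on (3), and part (1) by citing Fulton (as already indicated in the statement).

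For part (3), I would unpack the definition of $\Newt(I)$ as the convex hull in $M_{\bR}$ of the exponent vectors of monomials in $I$. Since $I = \langle x^g \mid g \in \Gamma \rangle$ and $R = k[S]$, the set of lattice exponents of monomials of $I$ is precisely $\Gamma + S$. Therefore
\[
\Newt(I) = \Conv(\Gamma + S) = \Conv(\Gamma) + \Conv(S) = \Conv(\Gamma) + \sigma,
\]
where I use the standard identity $\Conv(A+B) = \Conv(A) + \Conv(B)$ and the fact that $\sigma$, being a rational convex polyhedral cone, coincides with $\Conv(S)$ inside $M_\bR$ (its lattice-generator description makes this immediate).

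For part (2), write $I = \langle x^{g_i} \rangle_i$ and $J = \langle x^{h_j} \rangle_j$, so $IJ = \langle x^{g_i + h_j} \rangle_{i,j}$. Applying (3) to each ideal gives
\[
\Newt(I) + \Newt(J) = \Conv(\{g_i\}) + \Conv(\{h_j\}) + \sigma + \sigma,
\qquad
\Newt(IJ) = \Conv(\{g_i + h_j\}) + \sigma.
\]
These two expressions coincide, using $\sigma + \sigma = \sigma$ (since $\sigma$ is a cone) and once again $\Conv(A) + \Conv(B) = \Conv(A+B)$. The "in particular" assertion then follows by iterating to obtain $\Newt(I^n) = n\,\Newt(I)$ and invoking part (1) to translate this polytope statement into the generator description of $\overline{I^n}$.

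For part (1), I would simply cite \cite{FultonToric}; if an argument were desired, the standard one is to note that the integral closure of a monomial ideal is monomial, and then apply the valuative criterion edge-by-edge of $\sigma$ to identify $x^v \in \overline{I}$ with $v$ lying in the Newton polytope. The only place requiring any care in the whole lemma is the bookkeeping between Minkowski sums in $M_\bR$ and the lattice-point descriptions one extracts from them at the end; there is no genuine obstacle, and no step involves Frobenius or the map $\phi$, so the lemma is purely combinatorial.
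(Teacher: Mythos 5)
Your proposal is correct, and it takes a genuinely different route from the paper's proof. The paper proves part (2) first, working directly with the definition $\Newt(I) = \Conv\{a \in M \mid x^a \in I\}$: it expands $\Newt(I)+\Newt(J)$ as sums of two convex combinations, then ``allows repeats'' in the $u_i$ and $v_j$ to force both combinations to have the same weights, producing a single convex combination of exponents in $IJ$. Part (3) is then dispatched with one containment done directly and the other declared ``similar to (2).'' You invert the dependency: you prove (3) first, by observing that the exponent set of $I$ is $\Gamma + S$ and then applying the identities $\Conv(A+B)=\Conv(A)+\Conv(B)$ and $\Conv(S)=\sigma$; you then derive (2) from (3) using $\sigma+\sigma=\sigma$ together with $\Conv(A)+\Conv(B)=\Conv(A+B)$ once more. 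Your route is a bit more modular, as it isolates the Minkowski-sum-of-convex-hulls identity as the single workhorse and avoids the weight-matching manipulation; it also has the side benefit of making the ``other containment'' in (3) fully explicit rather than deferred. The paper's route is slightly more self-contained since it proves everything from the raw definition of $\Newt$ without invoking $\Conv(A+B)=\Conv(A)+\Conv(B)$ as a black box (that identity is, in effect, what its weight-matching argument establishes in the case at hand). One small point worth making explicit in your writeup: $\Conv(S)=\sigma$ is not quite immediate from the definitions; it uses that $\sigma$ is rational so each extremal ray contains lattice points, and that $0\in S$, so a scaling argument shows every point of $\sigma$ is a convex combination of $0$ and a far-out lattice point on each ray. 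But this is a standard fact and the gap is cosmetic, not substantive.
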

\begin{proof} We simply prove {\it{(2)}} and {\it{(3)}}.

For {\it{(2)}}, by definition $\Newt(I)= \Conv \{a \in M | x^a \in I \}$.  Then note that $\Newt(I) + \Newt(J)$ is simply
\[
\{ (s_1 u_1 + \dots + s_l u_l) + (t_1 v_1 + \dots + t_m v_m)\, |\, l,m > 0, \Sigma s_i = 1, s_i \geq 0, x^{u_i} \in I, \Sigma t_j = 1, t_j \geq 0, x^{v_j} \in J\}
\]
By allowing repeats in the $u_i$ and $v_j$ and possibly making some $s_i, t_j = 0$, we may assume that $l = m$ and that $s_i = t_i$ for all $i$.  Thus $\Newt(I) + \Newt(J)$ is equal to
\[
\{ s_1 (u_1 + v_1) + \dots + s_l (u_l + v_l) \, |\, l > 0, \Sigma s_i = 1, s_i \geq 0, x^{u_i} \in I, x^{v_i} \in J\}
\]
which is clearly equal to $\Newt(IJ)$ as desired.

We now prove {\it{(3)}}.
Every point in $\Newt(I)$ is of the form $\sum n_i a_i$ for some $a_i \in M$ with $x^{a_i} \in I$, $n_i >0$, and $\sum n_i=1$.
For each $i$, since $x^{a_i} \in I$, there exist $g_i \in \Gamma$ and $s_i \in S$ such that $a_i = g_i +s_i$. Therefore,
$\sum n_i a_i = (\sum n_i g_i)+ (\sum n_is_i) \in \Conv(\Gamma) +\sigma$. So $\Newt(I) \subseteq \Conv(\Gamma) + \sigma$.
The proof of the other containment is similar to {\it (2)} above.
\end{proof}

\begin{proposition}\label{ismonomial}
Let $R,\phi,\ba^t$ be as above. If $I$ is $\mathcal{C}^{\phi,\ba^t}$-fixed, then $I$ is monomial and $\{v | x^v \in I\}   \subseteq {w \over 1-p^e} +t\Newt(\ba) $.
\end{proposition}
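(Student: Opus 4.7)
The plan is to prove a slight strengthening of (b) which then delivers (a) as a corollary: \emph{for every $f \in I$ and every $v \in \Supp(f)$ one has $v - w' \in t\,\Newt(\ba)$}, where $w' := w/(1-p^e)$. Statement (b) is the specialization to $f = x^v$, and (a) follows from an explicit monomial-isolation trick once the support statement is in hand.

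To prove the support statement, fix $f \in I$ and $v_0 \in \Supp(f)$. Using $I = \sum_{n>0}\phi^n(\sJ_n\cdot I)$ with $\sJ_n := \overline{\ba^{t(p^{ne}-1)}}$ monomial, expand $f$ as a finite sum of terms $\phi^{n_k}(x^{u_k} g_k)$, where $x^{u_k}$ is a monomial of $\sJ_{n_k}$ and $g_k$ is taken from a fixed finite generating set $g_1,\ldots,g_m$ of $I$. Applying $\phi^n(x^u)=x^{(u-w_n)/p^{ne}}$ with $w_n := w(p^{ne}-1)/(p^e-1) = w'(1-p^{ne})$ and matching the $x^{v_0}$-coefficient produces an index $k$ with
\[
v_0 - w' \;=\; \frac{u^{(1)}+w^{(1)}-w'}{p^{n_1 e}},
\]
where $u^{(1)}$ lies in the monomial support of $\sJ_{n_1}$ and $w^{(1)}\in\Supp(g_{i_1})$. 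Iterating this argument on $(g_{i_1},w^{(1)})$ yields, for every $N$,
\[
v_0 - w' \;=\; \sum_{j=1}^{N}\frac{u^{(j)}}{p^{N_j e}} \;+\; \frac{w^{(N)}-w'}{p^{N_N e}}, \qquad N_j := n_1+\cdots+n_j,
\]
with each $w^{(j)}$ drawn from the finite set $\bigcup_i \Supp(g_i)$ and hence uniformly bounded.

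The key combinatorial observation is a telescoping: since $(p^{n_j e}-1)/p^{N_j e} = p^{-N_{j-1}e}-p^{-N_j e}$, Minkowski additivity (Lemma \ref{propertiesNewt}) places each partial sum $A_N := \sum_{j=1}^N u^{(j)}/p^{N_j e}$ inside $t(1-p^{-N_N e})\,\Newt(\ba)$. Writing $\Newt(\ba) = \Conv(\Gamma)+\sigma$ with $\Conv(\Gamma)$ compact, decompose $A_N = t(1-p^{-N_N e})\,c_N + \bar s_N$ with $c_N\in\Conv(\Gamma)$ and $\bar s_N\in\sigma$. The remainder in the iteration vanishes as $N\to\infty$ (boundedness of $w^{(N)}$), so $A_N\to v_0-w'$; extracting a convergent subsequence $c_N\to c\in\Conv(\Gamma)$ forces $\bar s_N\to (v_0-w')-tc$, which lies in the closed cone $\sigma$. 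Hence $v_0-w' = tc + \bigl((v_0-w')-tc\bigr) \in t\Conv(\Gamma)+\sigma = t\Newt(\ba)$.

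With the support statement in hand, I prove (a) by isolation. For $v_0\in\Supp(f)$ set $u_0 := (p^{ne}-1)(v_0-w')$. The identity $(p^{ne}-1)w' = -w\bigl(1+p^e+\cdots+p^{(n-1)e}\bigr)\in M$ combined with $v_0-w'\in t\Newt(\ba)\subseteq \sigma$ shows $u_0\in S$; the computation $u_0/(t(p^{ne}-1)) = (v_0-w')/t\in\Newt(\ba)$ gives $x^{u_0}\in\sJ_n$. A direct calculation yields $\phi^n(x^{u_0+v_0})=x^{v_0}$, and for $n$ large enough that $v-v_0\notin p^{ne}M$ for every other $v\in\Supp(f)$, each $\phi^n(x^{u_0+v})$ vanishes. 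Therefore $\phi^n(x^{u_0}\cdot f) = c_{v_0}^{1/p^{ne}}\,x^{v_0} \in \phi^n(\sJ_n\cdot I)\subseteq I$, so $x^{v_0}\in I$, proving (a). The main obstacle is the bookkeeping in the support statement: one must choose each successor from a fixed finite generating set of $I$ so that $w^{(j)}$ stays bounded, and must then invoke compactness of $\Conv(\Gamma)$ (rather than a naive set inclusion, which fails since $t(1-\epsilon)\Newt(\ba)\not\subseteq t\Newt(\ba)$ in general) to extract the limit inside the unbounded polytope $t\Newt(\ba)$.
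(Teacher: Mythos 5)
Your proof is correct and follows the same overall plan as the paper's: iterate the identity $I=\sum_{n>0}\phi^n(\overline{\ba^{t(p^{ne}-1)}}I)$ to expand $v_0-w'$ as a telescoping sum, take a limit to deduce the support condition, and then isolate a single monomial by hitting $f$ with $\phi^n(x^{u_0}\cdot{-})$ for $n\gg0$ so that the other terms die. The monomial-isolation step in the second half is essentially identical to what the paper does.

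The difference is in how the two arguments handle the remainder term and the limit, and here the paper's route is leaner. Where you keep the iterate $w^{(N)}$ separate and insist it be drawn from a fixed finite generating set $\{g_i\}$ of $I$ (so that $w^{(N)}/p^{N_N e}\to 0$ by boundedness), the paper simply observes that its analogue $m^{(k)}$ lies in $S$ and that $\Newt(J)+\sigma=\Newt(J)$ for any monomial ideal $J$, so the remainder is \emph{absorbed into the Newton polyhedron} rather than made to vanish — no fixed generating set and no boundedness are needed, and one also sidesteps the small subtlety (which your expansion implicitly uses) of pulling scalars out of $\phi^n$, which requires $k$ perfect or a little extra care. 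Likewise for the limit: you correctly note that the naive inclusion $t(1-\epsilon)\Newt(\ba)\subseteq t\Newt(\ba)$ can fail for unbounded polyhedra and so split off $\Conv(\Gamma)$ and pass to a convergent subsequence; the paper instead divides the containment $mp^{eK}+\frac{p^{eK}-1}{p^e-1}w\in t(p^{eK}-1)\Newt(\ba)$ by $t(p^{eK}-1)$ to land \emph{inside} $\Newt(\ba)$ itself, and then invokes only that $\Newt(\ba)$ is closed. Both are valid; the paper's is shorter and avoids the compactness extraction entirely. Nothing in your argument is wrong, but you could drop both the fixed-generating-set bookkeeping and the subsequence extraction by borrowing these two simplifications.
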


\begin{proof} For any element $h=\sum c_i x^{m_i} \in I$, fix any term $x^{m_i}$ and set $m=m_i$. Since
$$\sum_{n >0} \phi^n \left( \overline{\ba^{t(p^{en}-1)}}\cdot I \right) =I,$$ where the sum is taken over all integers $n>0$ such that $t(p^{en}-1)$ is an integer, there exist $n' >0$, $a' \in \Newt( \ba^{t(p^{en}-1)})$, and $m' \in S$ such that $x^{m'}$ is a term of some element in $I$ and that $\phi^{n'}( x^{a'}\cdot x^{m'}) = x^m$. Then
\[ m = {a'+m' - {p^{en'}-1 \over p^e-1} w \over p^{en'}}.\]

Repeating the same process $k$ times for $m'$ in the place of $m$, we can write
\[ m= {  \left( \sum^{k-1}_{i=1} p^{e\left(\sum^k_{j=i+1} n^{(j)}\right)} a^{(i)} \right) +a^{(k)}+ m^{(k)} - {p^{e\left(\sum^k_{j=1} n^{(j)}\right)}-1 \over p^e-1}w \over p^{e(\sum^k_{j=1} n^{(j)})}},
\]
where $a^{(i)} \in \Newt \left( \ba^{t\left(p^{(en^{(i)})}-1\right)} \right)$ and $m^{(k)} \in S$.

By Lemma \ref{propertiesNewt} and the fact that $\Newt(I)+S=\Newt(I)$, we have
\[
\begin{aligned}
&mp^{e\sum^k_{j=1} n^{(j)}}+ {p^{e\sum^k_{j=1} n^{(j)}}-1 \over p^e-1}w\\
=&\left( \sum^{k-1}_{i=1} p^{e\left(\sum^k_{j=i+1} n^{(j)}\right)} a^{(i)} \right) +a^{(k)}+ m^{(k)}\\
\in& \left( \sum^{k-1}_{i=1} p^{e\left(\sum^k_{j=i+1} n^{(j)}\right)}
 \Newt \left( \ba^{t\left(p^{(en^{(i)})} - 1\right)} \right) \right) + \Newt \left( \ba^{t\left(p^{en^{(k)}-1}\right)} \right)\\
 = & t \Bigg( \Big( \sum_{i=1}^{k-1} p^{e\left(\sum^k_{j=i+1} n^{(j)}\right)}  {\left(p^{(en^{(i)})}-1\right)} \Big) + (p^{(en^{(k)})} - 1)\Bigg) \cdot \Newt(\ba)\\
  = & t \Bigg( \Big( \sum_{i=1}^{k-1} \big(p^{e\left(\sum^k_{j=i} n^{(j)}\right)} - p^{e\left(\sum^k_{j=i+1} n^{(j)}\right)}\big)  \Big) + (p^{(en^{(k)})} - 1)\Bigg) \cdot \Newt(\ba)\\
  = & t \Bigg( \left(p^{e\left(\sum^k_{j=1} n^{(j)}\right)} - p^{(e n^{(k)})}\right) + (p^{(en^{(k)})} - 1)\Bigg) \cdot \Newt(\ba)\\
  =& t\left(p^{e(\sum^k_{j=1} n^{(j)})}-1\right) \cdot \Newt(\ba)\\
 \end{aligned}
 \]
Dividing by $p^{e\sum^k_{j=1} n^{(j)}}$ and letting $k$ go to infinity, we see that $m \in {w \over 1-p^e} + t \Newt(\ba)$.

Now for any $n>0$ such that $t(p^{en}-1)$ is an integer, $(p^{en}-1)m + {p^{en} -1 \over p^e-1}w\in t(p^{en}-1) \Newt(\ba)$ and hence by Lemma \ref{propertiesNewt}
\[x^{(p^{en}-1)m + {p^{en} -1 \over p^e-1}w} \in  \overline{ \ba^{t(p^{en}-1)}}.\]
Notice that for any term $x^{m_j}$ in $h \in I$,  $\phi^n( x^{(p^{en}-1)m + {p^{en} -1 \over p^e-1}w} \cdot x^{m_j}) = x^{m + { m_j -m \over p^{en}}}.$ Therefore, $\phi^n( x^{(p^{en}-1)m-{p^{en} -1 \over p^e-1}w} \cdot h) \in I$ is a nonzero constant multiple of $x^{m}$ for $n \gg 0$. So $x^{m} \in I$ as desired.
\end{proof}

Given any face $F$ of $t\Newt(\ba)$, we will use $\relint ({w \over 1-p^e}+F)$ to denote the relative interior of ${w \over 1-p^e}+F$. By relative interior $\relint(C)$ of a convex set $C$ with positive dimension we mean the interior of $C$ in the affine hull of $C$. The relative interior of a convex set $C$ can be characterized algebraically as follows.

\begin{theorem}[Theorem 3.5 in \cite{BrondstedConvexPolytopes}]
\label{characterization-relint}
For any convex set $C$, the following are equivalent
\begin{enumerate}
\item $x\in \relint(C)$;
\item for any line $A$ in the affine hull of $C$ with $x\in A$, there are points $y_0,y_1\in A\cap C$ such that $x=\delta y_0+(1-\delta)y_1$ for some $\delta\in (0,1)$;
\item for any point $y\in C$ with $y\neq x$, there exists $z\in C$ such that $x=\delta y+(1-\delta)z$ for some $\delta \in (0,1)$.
\end{enumerate}
\end{theorem}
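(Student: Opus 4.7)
The plan is to prove the cyclic chain $(1) \Rightarrow (2) \Rightarrow (3) \Rightarrow (1)$. The first two implications will be routine reductions to one-dimensional geometry, while $(3) \Rightarrow (1)$ is the main step and will require constructing an explicit open neighborhood of $x$ inside $C$ (relative to the affine hull of $C$).

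For $(1) \Rightarrow (2)$, I would start from the definition of the relative interior: there is an open ball $B$ in the affine hull of $C$ centered at $x$ with $B \subseteq C$. Any line $A$ in the affine hull through $x$ then meets $B$ in an open segment around $x$, and picking a point on each side of $x$ produces the required $y_0, y_1 \in A \cap C$ witnessing $x$ as a strict convex combination. For $(2) \Rightarrow (3)$, given $y \in C$ with $y \neq x$, I would apply (2) to the line $A$ through $x$ and $y$ to produce $u_0, u_1 \in A \cap C$ on opposite sides of $x$ along $A$; whichever of $u_0, u_1$ lies on the opposite side of $x$ from $y$ can play the role of $z$, and $\delta \in (0,1)$ is then obtained by solving a single linear equation.

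The main obstacle is $(3) \Rightarrow (1)$. First, note that (3) itself forces $x \in C$, since $x$ appears as a convex combination of points in $C$. Let $d$ be the dimension of the affine hull of $C$ and let $V$ denote its parallel linear subspace. Because $C$ affinely spans its affine hull and $x \in C$, I can choose $y_1, \dots, y_d \in C$ so that $v_i := y_i - x$ form a basis of $V$. Applying (3) to each $y_i$ provides $z_i \in C$ with $z_i - x = -\alpha_i v_i$ for some $\alpha_i > 0$. The key observation is then that any point $x + \sum_i c_i v_i$ with small $|c_i|$ admits the representation
\[
x + \sum_i c_i v_i \;=\; \Bigl(1 - \sum_i a_i - \sum_i \tfrac{b_i}{\alpha_i}\Bigr)\, x + \sum_i a_i\, y_i + \sum_i \tfrac{b_i}{\alpha_i}\, z_i,
\]
where $a_i = \max(c_i, 0)$ and $b_i = \max(-c_i, 0)$. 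This is a genuine convex combination of points of $C$ once $\sum_i a_i + \sum_i b_i/\alpha_i \leq 1$, a condition which holds uniformly for all $|c_i|$ less than some $\varepsilon > 0$ determined by the $\alpha_i$. Thus a small open box around $x$ in the affine hull lies in $C$, giving $x \in \relint(C)$.
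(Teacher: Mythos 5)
The paper does not actually prove this theorem; it quotes it from Br{\o}ndsted's book (Theorem 3.5 there), so there is no in-paper argument to compare against. Your proposal is a reasonable self-contained proof and the cyclic chain $(1)\Rightarrow(2)\Rightarrow(3)\Rightarrow(1)$ is the natural route. The real content is $(3)\Rightarrow(1)$, and the explicit box construction checks out: once $x \in C$ (forced by (3) whenever $C$ has a second point, and trivial when $C$ is a singleton), the translates $\{y - x : y \in C\}$ span the direction space of the affine hull, so the basis vectors $v_i$ exist; applying (3) to each $y_i$ places a point $z_i$ of $C$ at $x - \alpha_i v_i$ with $\alpha_i>0$; and the displayed identity really does express $x+\sum_i c_i v_i$ as a convex combination of $x, y_1,\dots,y_d, z_1,\dots,z_d$ once $\sum_i a_i + \sum_i b_i/\alpha_i \le 1$, which holds uniformly for $|c_i|$ small. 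This is a bit more hands-on than the usual textbook derivation, which typically first proves nonemptiness of the relative interior and a separate ``line-segment principle,'' but it gets to the same place more directly.

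There is one step that does not hold with the statement read literally. In $(2)\Rightarrow(3)$ you take the line $A$ through $x$ and $y$ and assert that (2) produces $u_0,u_1 \in A\cap C$ on opposite sides of $x$. But as transcribed, (2) does not force $y_0\neq y_1$: the degenerate choice $y_0=y_1=x$ satisfies $x=\delta y_0+(1-\delta)y_1$ for every $\delta\in(0,1)$. Concretely, for $C=[0,1]\times\{0\}\subseteq\bR^2$ and $x=(0,0)$, the only line in the affine hull through $x$ is the $x$-axis, and $y_0=y_1=x$ witnesses (2); yet (3) fails for $y=(1,0)$ and $x\notin\relint(C)$, so $(2)\Rightarrow(3)$ breaks. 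The fix is to add $y_0\neq y_1$ to (2), or equivalently to say $x$ lies in the open segment between $y_0$ and $y_1$, which is how Br{\o}ndsted actually phrases it. Your argument tacitly uses that stronger form; it is an imprecision in the transcription of the cited statement rather than in your conceptual approach, but without it the implication chain has a genuine hole.
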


For our purpose, the relative interior of a point is the point itself. For example, there are four faces of the following polyhedron.
\begin{center}
\begin{tikzpicture}
\begin{scope}[very thick]
\draw (0,0) -- (2,-1) (0,0)--(0.5,2);
\end{scope}
\fill[gray] (0, 0) -- (2, -1) -- (2, 2) -- (0.5, 2) -- cycle;
\draw (-0.3, -0.3) node{$\sigma_0$};
\draw (1.3, -1) node{$\sigma_1$};
\draw (0, 1.5) node{$\sigma_2$};
\draw (1.3, 1) node{$\sigma_3$};
\end{tikzpicture}
\end{center}
The relative interiors of the three positive dimensional faces are pictured as below.
\begin{center}
\begin{tikzpicture}
\begin{scope}[dashed]
\draw  (0.05,0.1) -- (0.5, 2) ;
\end{scope}
\begin{scope}[very thick]
\draw (0.07,0) -- (2, -1);
\end{scope}
\draw (0,0) circle (0.5ex);
\draw (1, -2) node{$\relint \sigma_1$};

\begin{scope}[dashed]
\draw (2.57, 0) -- (4.5, -1) ;
\end{scope}
\begin{scope}[very thick]
\draw (2.55,0.1) -- (3, 2);
\end{scope}
\draw (2.5,0) circle (0.5ex);
\draw (3.5, -2) node{$\relint \sigma_2$};

\fill[gray] (5.03, 0.02) -- (7.03, -0.98) -- (7.03, 2.02) -- (5.53, 2.02) -- cycle;
\begin{scope}[dashed]
\draw (5,0) -- (5.5, 2) (5, 0) -- (7, -1) ;
\end{scope}
\draw (6, -2) node{$\relint \sigma_3$};

\end{tikzpicture}
\end{center}

\begin{proposition}\label{containInterior}
Suppose $I$ is $\mathcal{C}^{\phi,\ba^t}$-fixed and fix a face $F$ of $t\Newt(\ba)$.  If $\{ v | x^v \in I \} \cap ({w \over 1-p^e}+F) \neq \emptyset$, then $\{ v | x^v \in I \}  \supseteq \relint({w \over 1-p^e}+F) \cap S$.
\end{proposition}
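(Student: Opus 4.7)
The plan is to use the fixedness relation $I = \sum_{n>0}\phi^n\!\left(\overline{\ba^{t(p^{en}-1)}}\cdot I\right)$ directly: for an arbitrary $v \in \relint\!\left(\tfrac{w}{1-p^e}+F\right) \cap S$, I will exhibit a single $n$, a single monomial $x^u$ from $I$ (the one guaranteed by the hypothesis), and a single $x^a \in \overline{\ba^{t(p^{en}-1)}}$ such that $\phi^n(x^a \cdot x^u) = x^v$, forcing $x^v \in I$. Since $I$ is monomial by Proposition \ref{ismonomial}, this suffices.

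Fix $u \in S$ with $x^u \in I$ and $u \in \tfrac{w}{1-p^e}+F$, and write $u = \tfrac{w}{1-p^e} + tu'$, $v = \tfrac{w}{1-p^e} + tv'$, where $u' \in F/t$ and $v' \in \relint(F/t)$ are viewed as points in the corresponding face of $\Newt(\ba)$. To arrange $\phi^n(x^{a+u}) = x^v$ I am forced to take $a = p^{en}v - u + \tfrac{p^{en}-1}{p^e-1}w$. Using $\tfrac{1}{1-p^e} = -\tfrac{1}{p^e-1}$ the two $w$-contributions cancel, leaving
\[
a \;=\; t(p^{en}v' - u').
\]
Since $u,v,w \in M$ and $\tfrac{p^{en}-1}{p^e-1} \in \bZ$, automatically $a \in M$, and $a+u = p^{en}v \in S$ guarantees $x^{a+u} \in R$ so that $\phi^n$ may be applied.

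By Lemma \ref{propertiesNewt}(2), verifying $x^a \in \overline{\ba^{t(p^{en}-1)}}$ reduces to showing
\[
\frac{a}{t(p^{en}-1)} \;=\; \frac{p^{en}v' - u'}{p^{en}-1} \;=\; v' + \frac{v'-u'}{p^{en}-1} \;\in\; \Newt(\ba).
\]
When $u' = v'$ this is immediate. Otherwise, since $v' \in \relint(F/t)$ and $u' \in F/t$, the equivalent characterization of relative interior in Theorem \ref{characterization-relint} implies that the ray emanating from $u'$ through $v'$ remains inside $F/t$ for a positive distance past $v'$; that is, there is some $\mu_0 > 1$ with $u' + \mu(v'-u') \in F/t \subseteq \Newt(\ba)$ for every $\mu \in [1,\mu_0]$. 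Thus any $n$ with $1 + \tfrac{1}{p^{en}-1} \le \mu_0$ will do.

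Finally, I must also arrange $t(p^{en}-1) \in \bZ$. Writing $t = r/s$ with $\gcd(p,s)=1$, the condition $s \mid p^{en}-1$ holds for all $n$ in an infinite arithmetic progression, so such $n$ can be taken arbitrarily large and both requirements met simultaneously. For this $n$ the construction gives $x^v = \phi^n(x^a \cdot x^u) \in \phi^n\!\left(\overline{\ba^{t(p^{en}-1)}}\cdot I\right) \subseteq I$. The only real obstacle is the geometric prolongation step, which falls out cleanly from the relative-interior characterization recalled just before the proposition once the calculation of $a$ has been reduced to the simple form $t(p^{en}v' - u')$.
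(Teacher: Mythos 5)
Your proof is correct and follows essentially the same route as the paper's. In both arguments, the monomial to feed into $\phi^n$ is $x^a$ with $a = p^{en}v - u + \tfrac{p^{en}-1}{p^e-1}w$, and the only real content is showing that $a \in t(p^{en}-1)\Newt(\ba)$ for suitable $n$; both you and the paper reduce this to the observation that, by Theorem \ref{characterization-relint}, the segment from $u$ through $v$ can be prolonged slightly past the interior point $v$ while remaining in the (translated) face, so that the point $\tfrac{p^{en}v-u}{p^{en}-1}$ lands in that face once $n$ is large. Your normalization via $u', v'$ is merely a cosmetic change of coordinates, and you are slightly more explicit than the paper about choosing $n$ simultaneously large and in the arithmetic progression making $t(p^{en}-1)$ integral, but that point is implicit in the paper's conventions.
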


\begin{proof} Choose $v_0 \in \{ v | x^v \in I \} \cap ({w \over 1-p^e}+F)$ and $v_0' \in \relint({w \over 1-p^e}+F) \cap S$ such that $v_0 \neq v_0'$. Choose $ \alpha_1 ,\dots, \alpha_k \in {w \over 1-p^e}+F$ so that $v_0' = n_0v_0 + \sum_{i=1}^k n_i \alpha_i$ where $0 < n_i <1$ and $\sum_{i=0}^k n_i =1$.
For $n \gg 0$, $p^{en} n_0 >1$ and
\[ p^{en} v_0' - v_0 = (p^{en}n_0 - 1) v_0 + \sum_{i=1}^k p^{en}n_i \alpha_i \in (p^{en}-1)({w \over 1-p^e}+F) \subseteq {p^{en}-1 \over 1-p^e}w +t(p^{en}-1) \Newt(\ba).\]
By Lemma \ref{propertiesNewt} $x ^{p^{en}v_0'-v_0 + {p^{en}-1 \over p^e-1}w} \in  \overline{ \ba^{t(p^{en}-1)}}$ and hence
$x^{v_0'} = \phi^n ( x^{p^{en} v_0'- v_0 + {p^{en}-1 \over p^e-1}w} \cdot x^{v_0}) \in I$ as desired.
\end{proof}

Now, we are ready to describe the $\mathcal{C}^{\phi,\ba^t}$-fixed ideals. Set $\sF = \{ \text{ faces of } t\Newt(\ba) \}$ (Here, we assume $t \Newt(\ba) \in \sF$). For $F \in \sF$, denote
$I_F :=  \langle  x^v | v \in \relint({w \over 1-p^e}+F')\cap S \text{ for some }F' \supseteq F \rangle.$

\begin{theorem}
\label{cartier-algebra-fixed-ideal}
$I$ is $\mathcal{C}^{\phi,\ba^t}$-fixed if and only if $I = \sum_{F \in \sG} I_F$ for some non-empty $\sG \subseteq \sF$.
\end{theorem}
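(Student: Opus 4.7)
The plan is to reduce the theorem to two assertions: (a) each individual $I_F$ is $\mathcal{C}^{\phi,\ba^t}$-fixed, and (b) for any $\mathcal{C}^{\phi,\ba^t}$-fixed ideal $I$, setting $\sG = \{F \in \sF : \{v : x^v \in I\} \cap \relint(\frac{w}{1-p^e}+F) \neq \emptyset\}$ yields $I = \sum_{F \in \sG} I_F$. Combined with Proposition \ref{propPropertiesOfFixedIdeals}(i) (sums of fixed ideals are fixed), (a) delivers the ``if'' direction and (b) is the ``only if'' direction.

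For (a), I would verify the two inclusions separately. For $I_F \subseteq \sum_{n > 0}\phi^n(\overline{\ba^{t(p^{en}-1)}} \cdot I_F)$, take a generator $x^u$ of $I_F$ (so $u - \frac{w}{1-p^e} \in \relint F'$ for some $F' \supseteq F$) and any $n > 0$ with $t(p^{en}-1) \in \bZ$; set $b = (p^{en}-1)(u - \frac{w}{1-p^e})$. Then $b \in (p^{en}-1)F' \cap M \subseteq t(p^{en}-1)\Newt(\ba) \cap M$, so $x^b \in \overline{\ba^{t(p^{en}-1)}}$ by Lemma \ref{propertiesNewt}, and a short calculation using $\phi^n(x^c) = x^{(c - \frac{p^{en}-1}{p^e-1}w)/p^{en}}$ gives $\phi^n(x^b \cdot x^u) = x^u$. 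For the reverse inclusion, suppose $\phi^n(x^b \cdot x^v) = x^u \neq 0$ with $x^b \in \overline{\ba^{t(p^{en}-1)}}$ and $v' := v - \frac{w}{1-p^e} \in \relint F'$ for some $F' \supseteq F$. The same formula yields
\[
u - \frac{w}{1-p^e} = \frac{b+v'}{p^{en}} = \Bigl(1 - \frac{1}{p^{en}}\Bigr) \cdot \frac{b}{p^{en}-1} + \frac{1}{p^{en}} \cdot v',
\]
presenting $u - \frac{w}{1-p^e}$ as a convex combination with \emph{positive} weights of $\frac{b}{p^{en}-1} \in t\Newt(\ba)$ and $v' \in \relint F' \subseteq t\Newt(\ba)$. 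By the standard polytope fact that the unique face $F''$ of $t\Newt(\ba)$ with $u - \frac{w}{1-p^e} \in \relint F''$ must contain both points appearing in any such convex decomposition, we get $v' \in F''$; since $v' \in \relint F'$, this forces $F' \subseteq F''$, hence $F'' \supseteq F$ and $x^u \in I_F$. This convex-geometric identification is the main technical obstacle.

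For (b), Proposition \ref{ismonomial} gives that $I$ is monomial with exponents contained in $\frac{w}{1-p^e} + t\Newt(\ba)$; since that polytope is the disjoint union of the relative interiors of its faces, the exponent set of $I$ splits along the faces accordingly. Proposition \ref{containInterior} ensures that whenever $I$ meets $\frac{w}{1-p^e} + F$, it contains every monomial with exponent in $\relint(\frac{w}{1-p^e}+F) \cap S$. From this I would deduce the upward-closure property: if $F \in \sG$, $F' \supseteq F$, and $\relint(\frac{w}{1-p^e}+F') \cap S \neq \emptyset$, then $F' \in \sG$. Unwinding the definition of $I_F$ and comparing generator sets on both sides then yields $I = \sum_{F \in \sG} I_F$.
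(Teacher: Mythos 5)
Your proposal is correct and takes essentially the same route as the paper: your (b) is the paper's forward direction, and your (a) together with Proposition~\ref{propPropertiesOfFixedIdeals}(i) gives its converse, and the key step in both is the same presentation of $u - \frac{w}{1-p^e}$ as a strict convex combination of a point of $t\Newt(\ba)$ and a point of $\relint F'$. Where you invoke the standard polytope fact that the face containing a point in its relative interior absorbs both endpoints of any strict convex combination of that point, the paper reaches the identical conclusion by explicitly constructing supporting functionals $f_{\theta}$ for the facets through $\tau$ and applying Theorem~\ref{characterization-relint}; the two arguments are equivalent.
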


\begin{proof}
First, suppose $\sum_{n >0} \phi^n \left( \overline{\ba^{t(p^{en}-1)}}\cdot I \right)=I $, where the sum is taken over all integers $n>0$ such that $t(p^{en}-1)$ is an integer.
Let $\sG$ be the subset of $\sF$ consisting of all the faces $F$ satisfying $\langle v | x^v \in I \rangle \cap \relint({w \over 1-p^e}+F) \neq \emptyset$. Then $I \supseteq \sum_{F \in \sG} I_F$ by Proposition \ref{containInterior}. By Proposition \ref{ismonomial}, $x^v \in I$ implies $v \in \relint({w \over 1-p^e}+F) \cap S$ for some $F \in \sG$. So $I \subseteq \sum_{F \in \sG} I_F$.

Conversely, suppose $I = \sum_{F \in \sG} I_F$ for some $\sG \subseteq \sF$.
We first show that  $$\sum_{n >0} \phi^n \left( \overline{\ba^{t(p^{en}-1)}}\cdot I \right)\supseteq I .$$
Let $x^{v} \in I$, then $v \in \relint({w \over 1-p^e}+F) \subseteq
{w \over 1-p^e} + t \Newt(\ba)$ for some face $F$ of $t \Newt(\ba)$.
For any $n>0$ such that $t(p^{en}-1)$ is an integer,
\[ (p^{en}-1)v \in {p^{en}-1 \over 1 - p^e}w +t(p^{en}-1) \Newt(\ba).
\]
Therefore by Lemma \ref{propertiesNewt}, \[ x^{(p^{en}-1)v +{p^{en}-1 \over  p^e-1}w } \in  \overline{\ba^{t(p^{en}-1)}}.
\]
So $ x^v = \phi^n\left( x^{(p^{en}-1)v +{p^{en}-1 \over  p^e-1}w } \cdot x^v \right) \in \phi^n\left(\overline{\ba^{t(p^{en}-1)}} \cdot I \right)
$
as desired.

For the other containment, notice that it suffices to show that
$\sum_{n >0} \phi^n \left( \overline{\ba^{t(p^{en}-1)}}\cdot I_F \right)\subseteq I_F $ for any $F$.
Suppose $x^u \in \sum_{n >0} \phi^n \left( \overline{\ba^{t(p^{en}-1)}}\cdot I_F \right)$ for some $F$, then there exists $n>0$ such that $t(p^{en}-1)$ is an integer and that $$p^{en}u = \alpha+ \beta - {p^{en}-1 \over p^e-1}w$$ for some $\alpha$ and $\beta$ such that $x^{\alpha} \in \overline{\ba^{t(p^{en}-1)}}$ and $x^{\beta} \in I_F$.
In particular, $\alpha \in \Newt(\ba^{t(p^{en}-1)}) = t(p^{en}-1) \Newt(\ba)$ (by Lemma \ref{propertiesNewt}) and $\beta \in {w \over 1-p^e}+\relint \gamma$ for some $\gamma \supseteq F$ (because $x^{\beta} \in I_F$).
Therefore,
\[\begin{aligned}
u- {w \over 1-p^e} &= {\alpha \over p^{en}} + { (\beta - {w \over 1-p^e})\over p^{en}} \\
&\in  {(p^{en}-1) \over p^{en}}  t\Newt(\ba) + {1 \over p^{en}}\relint \gamma \\
&\subseteq t \Newt(\ba).
\end{aligned}\]
In particular, $u - {w \over 1-p^e}  \in \relint \tau$ for some unique face $\tau$ in $\sF$. We claim that $\tau \supseteq \gamma$. This implies that $x^u \in I_{\tau}  \subseteq I_{\gamma} \subseteq I_F$ which finishes the proof.

To see why $\tau \supseteq \gamma$, write $u - {w \over 1-p^e} = {(p^{en}-1) \over p^{en}} a + { 1 \over p^{en}} b$ for some $a \in t \Newt(\ba)$ and $b \in \relint \gamma$. If $\tau = t \Newt(\ba)$, there is nothing to do. Suppose $\tau \neq t \Newt(\ba)$ and let $\sF_{\tau}$ be the set of all maximal faces in $\sF \setminus \{ t \Newt(\ba) \}$ which contain $\tau$. For each face $\theta \in \sF_{\tau}$, fix a non-zero linear functional $f_{\theta}$ such that $f_{\theta}(t\Newt(\ba)) \geq 0$ and $f_{\theta}(\theta)=0$. Then
\[   \tau = \left(\bigcap_{\theta \in \sF_{\tau}} \{ s \in M_{\bR} | f_{\theta}(s) =0 \} \right) \cap (t\Newt \ba).\]
Now, for each $\theta \in \sF_{\tau}$,
\[0 = f_{\theta}(u - {w \over 1-p^e}) = {(p^{en}-1) \over p^{en}} f_{\theta}( a) + { 1 \over p^{en}}f_{\theta}( b).
\]
Since $f_{\theta}(a)$ and $f_{\theta}(b)$ are non negative, we must have $f_{\theta}(a)= f_{\theta}(b) = 0$.
If $\gamma$ is a point, then it is clear that $\gamma=\{b\}$ and hence $f_{\theta}(\gamma)=0$. Assume that $\gamma$ is not a point. Since $b$ is in $\relint(\gamma)$, according to Theorem \ref{characterization-relint}, for each $g \in \gamma$, we can choose $g' \in \gamma$ so that $b=\delta g +(1-\delta)g'$ for some $\delta >0$. So for any $\theta \in \sF_{\tau}$, we have
\[0=f_{\theta}(b)=\delta f_{\theta}(g) +(1-\delta) f_{\theta}(g').\]
This implies $f_{\theta}(g)=0$ for all $g \in \gamma$ and all $\theta \in \sF_{\tau}$. Therefore,
\[   \gamma \subseteq \left(\bigcap_{\theta \in \sF_{\tau}} \{ s \in M_{\bR} | f_{\theta}(s) =0 \} \right) \cap (t\Newt \ba) = \tau.\]
\end{proof}

\begin{corollary}
Given a toric triple $(R,\phi,\ba^t)$ as above, the set of ideals $I$ satisfying the condition that
\[
\sum_{n>0}\phi^n(\overline{\ba^{t(p^{en}-1)}} I) = I
\]
is closed under intersection.
\end{corollary}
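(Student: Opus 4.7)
The plan is to verify directly that $\mathcal{C}^{\phi,\ba^t}_+(I \cap I') = I \cap I'$ whenever $I$ and $I'$ are both $\mathcal{C}^{\phi,\ba^t}$-fixed. The ``$\subseteq$'' inclusion is formal: since $I \cap I' \subseteq I$ and $I \cap I' \subseteq I'$, monotonicity of the operator $J \mapsto \sum_{n>0}\phi^n(\overline{\ba^{t(p^{en}-1)}}\cdot J)$ gives
\[
\mathcal{C}^{\phi,\ba^t}_+(I \cap I') \subseteq \mathcal{C}^{\phi,\ba^t}_+(I) \cap \mathcal{C}^{\phi,\ba^t}_+(I') = I \cap I',
\]
using that $I$ and $I'$ are fixed.

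For the reverse inclusion, the key observation is that by Proposition \ref{ismonomial} both $I$ and $I'$ are monomial, hence so is $I \cap I'$, and every monomial generator $x^v$ of $I \cap I'$ satisfies $v \in \frac{w}{1-p^e} + t\Newt(\ba)$ (inherited from $x^v \in I$). For any $n > 0$ with $t(p^{en}-1) \in \bZ$, set $\beta := (p^{en}-1)v + \frac{p^{en}-1}{p^e-1}w$; then $\beta = (p^{en}-1)\bigl(v - \tfrac{w}{1-p^e}\bigr) \in (p^{en}-1)\, t\Newt(\ba) = \Newt\bigl(\ba^{t(p^{en}-1)}\bigr)$, so $x^\beta \in \overline{\ba^{t(p^{en}-1)}}$ by Lemma \ref{propertiesNewt}, and a direct computation (exactly as in the ``$\supseteq$'' half of the proof of Theorem \ref{cartier-algebra-fixed-ideal}) gives $\phi^n(x^\beta \cdot x^v) = x^v$. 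Thus $x^v \in \phi^n\bigl(\overline{\ba^{t(p^{en}-1)}}\cdot (I \cap I')\bigr) \subseteq \mathcal{C}^{\phi,\ba^t}_+(I \cap I')$, and summing over monomial generators of $I \cap I'$ yields the claim.

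I expect no serious obstacle here, since the essential combinatorics has already been established in the proof of Theorem \ref{cartier-algebra-fixed-ideal}. One simply needs to recognize that the ``$\supseteq$'' direction of that theorem's proof only uses that the monomial generators of the ambient ideal lie in $\frac{w}{1-p^e} + t\Newt(\ba)$, a condition inherited by $I \cap I'$ from $I$. Alternatively, one could phrase the whole argument combinatorially: Theorem \ref{cartier-algebra-fixed-ideal} shows that fixed ideals correspond bijectively to upward-closed subsets of $\sF$, the exponent set of a fixed ideal being the disjoint union of the strata $\relint\bigl(\frac{w}{1-p^e}+F\bigr)\cap S$ it contains; since such unions are preserved under intersection and upward-closed subsets are closed under intersection, $I\cap I'$ is again of the required form.
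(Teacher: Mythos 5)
Your proof is correct, and it is worth noting that it offers two distinct routes where the paper leaves the deduction implicit (the corollary follows from Theorem~\ref{cartier-algebra-fixed-ideal} without any stated argument). Your second paragraph (the combinatorial rephrasing in terms of upward-closed subsets of $\sF$) is essentially the intended deduction: the exponent set of a $\mathcal{C}^{\phi,\ba^t}$-fixed ideal is a union of strata $\relint(\tfrac{w}{1-p^e}+F)\cap S$ over an upward-closed set of faces (this combines Propositions~\ref{ismonomial} and~\ref{containInterior}), and upward-closed sets are stable under intersection. (Your word ``bijectively'' is a mild overstatement --- the map from upward-closed subsets to fixed ideals need not be injective if some strata are empty --- but surjectivity is all that is needed.) Your first paragraph, however, is a genuinely lighter argument: it uses \emph{only} Proposition~\ref{ismonomial} and Lemma~\ref{propertiesNewt}, bypassing the full classification in Theorem~\ref{cartier-algebra-fixed-ideal}. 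The observation doing the work is that $\mathcal{C}^{\phi,\ba^t}_+(J)\supseteq J$ holds automatically for \emph{any} monomial ideal $J$ whose exponent set lies in $\tfrac{w}{1-p^e}+t\Newt(\ba)$, since for each such exponent $v$ and suitable $n$ (namely $n$ with $t(p^{en}-1)\in\bZ$, which exists as $t$ has no $p$ in its denominator) the monomial $x^{(p^{en}-1)v+\tfrac{p^{en}-1}{p^e-1}w}$ lies in $\overline{\ba^{t(p^{en}-1)}}$ and pairs with $x^v$ under $\phi^n$ to recover $x^v$; combined with the trivial $\subseteq$ from monotonicity, one gets that every such $J$ contained in a fixed ideal is itself fixed. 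This direct argument is shorter and arguably more transparent, while the combinatorial version buys a clean structural picture of the whole lattice of fixed ideals.
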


\begin{remark}
It also follows from the previous result that in the toric setting for pairs $(R,\phi)$, the set of $\phi$-fixed ideals agrees with the set of $(\phi^2 = \phi \circ F^e_* \phi)$-fixed ideals.  This is also true when $\phi$ is surjective, see for example \cite[Proposition 4.1]{SchwedeFAdjunction}, but it fails in general as the following example shows.
\end{remark}
The following example was clarified to us in conversations with Manuel Blickle, also compare with \cite{DieudonneLieGroupsAndLieHyperII} and \cite[Example 5.28]{BlickleThesis}.

\begin{example}
\label{exInfinitelyManyIdealsEventually}
Suppose that $S = \overline{\bF_5}[x,y,z]$ and $f = x^4 + y^4 + z^4$.  Consider the map $\Phi_S : F_* S \to S$ which sends $x^4y^4z^4 \mapsto 1$ and all other monomials $x^iy^jz^k$ to zero (as long as $0 \leq i,j,k \leq 4 = 5-1$).  Further consider the map $\phi : F_* S \to S$ defined by the rule $\phi(\blank) = \Phi_S(f^4 \cdot \blank)$.  It is easy to verify that $\phi$ induces a map on $R = S/(f)$.

Set $\bm = (x,y,z)$.  One can verify directly that both $\bm$ and $\bm^2 = (x^2, xy, xz, y^2, yz, z^2)$ are $\phi$-fixed (as is $(f)$).  Furthermore, $\phi(x) = 2x$, $\phi(y) = 2y$ and $\phi(z) = 2z$.  Therefore, for any elements $a,b,c \in \overline{\bF_5}$, $\phi(ax + by + cz) = 2 a^{1 \over 5} x + 2 b^{1 \over 5} y + 2 c^{1 \over 5} z$.  Thus the ideal $\bm^2 + (x)$ is also $\phi$-fixed, as is $\bm^2 + (ax + by + cz)$ for any elements $a, b, c \in \bF_5 \subset \overline{\bF_5}$ (this is still a finite set).

However, if one considers $\phi^2 = \phi \circ F_* \phi : F^2_* S \to S$, then one has $\phi^2(ax + by + cz) = 4 a^{1 \over 25} x + 4 b^{1 \over 25} y + 4 c^{1 \over 25} z$ so that $\bm^2 + (ax + by + cz)$ is $\phi^2$-fixed for any elements $a,b,c \in \bF_{5^2} \subseteq \overline{\bF_5}$.  Thus we have found ideals that are $\phi^2$-fixed but not $\phi$-fixed.  Even more, continuing in this way, one obtains that the set of $\phi^n$-fixed ideals can become arbitrarily large as $n$ increases.

As we saw, toric varieties do not exhibit this phenomena.
\end{example}

\begin{remark}
One can also ask what happens if $t \geq 0$, the coefficient of $\ba$, has a $p$ in its denominator.  Unfortunately, in that case the non-$F$-pure ideal (the largest compatible ideal) seems to typically coincide with the test ideal (the smallest non-$F$-pure ideal) and so the lattice of compatible ideals is uninteresting.  For further discussion, see \cite{FujinoSchwedeTakagiSupplements}.
\end{remark}

\begin{example} As suggested by the referee, we include an example to complement the discussion on page 2.

Consider $k[S]=k[x,xy,xy^2,xy^3]$ and as before the $p^{-e}$-linear map $\phi(\blank)=\phi_c(x^{-w}\cdot \blank)$ fixing $\ba = R$.
We compute the fixed ideals for various values of $w$ and $p^e$, some of these are pictured below.
\begin{center}
\begin{tikzpicture}[scale = 0.75]
\draw (0,0) circle (0.5ex);
\draw (1,0) circle (0.5ex);
\draw (1,1) circle (0.5ex);
\draw (1,2) circle (0.5ex);
\draw (1,3) circle (0.5ex);
\draw (2,0) circle (0.5ex);
\draw (2,1) circle (0.5ex);
\draw (2,2) circle (0.5ex);
\draw (2,3) circle (0.5ex);
\draw (2,4) circle (0.5ex);
\draw (2,5) circle (0.5ex);
\draw (2,6) circle (0.5ex);
\draw (1, -0.3) node{$x^1$};
\draw (0.7, 3.3) node{$xy^3$};
\begin{scope}[very thick]
\draw (0,0)--(2.5,7.5);
\draw (0,0)--(2.5,0);
\draw (1, -1.75) node{$\text{ }$};
\end{scope}
\end{tikzpicture}  
\hskip 24pt  
\begin{tikzpicture}[scale = 0.75]
\fill[gray] (1,2) -- (2.5, 6.5) -- (2.5,2) -- cycle;
\draw (0,0) circle (0.5ex);
\draw (1,0) circle (0.5ex);
\draw (1,1) circle (0.5ex);
\draw (1,2) circle (0.5ex);
\draw (1,3) circle (0.5ex);
\draw (2,0) circle (0.5ex);
\draw (2,1) circle (0.5ex);
\draw (2,2) circle (0.5ex);
\draw (2,3) circle (0.5ex);
\draw (2,4) circle (0.5ex);
\draw (2,5) circle (0.5ex);
\draw (2,6) circle (0.5ex);
\begin{scope}[very thick]
\draw (0,0)--(2.5,7.5);
\draw (0,0)--(2.5,0);
\end{scope}
\begin{scope}[thick, dotted]
\draw(1, 2) -- (2.5, 6.5);
\draw(1, 2) -- (2.5, 2);
\end{scope}

\draw (1.2, -1) node{$w = (-1,-2)$};
\draw (1.2, -1.5) node{$p = 2, e = 1$};
\end{tikzpicture}
\hskip 24pt  
\begin{tikzpicture}[scale = 0.75]
\fill[gray] (0.5,1) -- (2.5, 7) -- (2.5,1) -- cycle;
\draw (0,0) circle (0.5ex);
\draw (1,0) circle (0.5ex);
\draw (1,1) circle (0.5ex);
\draw (1,2) circle (0.5ex);
\draw (1,3) circle (0.5ex);
\draw (2,0) circle (0.5ex);
\draw (2,1) circle (0.5ex);
\draw (2,2) circle (0.5ex);
\draw (2,3) circle (0.5ex);
\draw (2,4) circle (0.5ex);
\draw (2,5) circle (0.5ex);
\draw (2,6) circle (0.5ex);
\begin{scope}[very thick]
\draw (0,0)--(2.5,7.5);
\draw (0,0)--(2.5,0);
\end{scope}
\begin{scope}[thick, dotted]
\draw(0.5, 1) -- (2.5, 7);
\draw(0.5, 1) -- (2.5, 1);
\end{scope}

\draw (1.2, -1) node{$w = (-1,-2)$};
\draw (1.2, -1.5) node{$p = 3, e = 1$};
\end{tikzpicture}
\hskip 24pt  
\begin{tikzpicture}[scale = 0.75]
\fill[gray] (0,-0.5) -- (2.5, 7) -- (2.5,-0.5) -- cycle;
\draw (0,0) circle (0.5ex);
\draw (1,0) circle (0.5ex);
\draw (1,1) circle (0.5ex);
\draw (1,2) circle (0.5ex);
\draw (1,3) circle (0.5ex);
\draw (2,0) circle (0.5ex);
\draw (2,1) circle (0.5ex);
\draw (2,2) circle (0.5ex);
\draw (2,3) circle (0.5ex);
\draw (2,4) circle (0.5ex);
\draw (2,5) circle (0.5ex);
\draw (2,6) circle (0.5ex);
\begin{scope}[very thick]
\draw (0,0)--(2.5,7.5);
\draw (0,0)--(2.5,0);
\end{scope}
\begin{scope}[thick, dotted]
\draw(0, -0.5) -- (2.5, 7);
\draw(0, -0.5) -- (2.5, -0.5);
\end{scope}

\draw (1.2, -1) node{$w = (0,1)$};
\draw (1.2, -1.5) node{$p = 3, e = 1$};
\end{tikzpicture}

\end{center}
\begin{itemize}
\item $w=(-1,-2)$: \\
When $p^e=2$, the $\phi$-fixed ideals are \\
$0, \langle x^2y^3,x^2y^4\rangle, \langle x^2y^2,x^2y^3,x^2y^4\rangle,
\langle x^2y^3,x^2y^4,x^2y^5\rangle,\langle x^2y^2,x^2y^3,x^2y^4,x^2y^5\rangle, \langle xy^2 \rangle.$\\
When $p^e=3$, the $\phi$-fixed ideals are $0, \langle xy^2 \rangle, \langle xy,xy^2\rangle$.\\
When $p^e \geq 4$, the only $\phi$-fixed ideals are $0$ and $\langle xy,xy^2 \rangle$.
\item $w=(1,2)$: The $\phi$-fixed ideals are $0,k[S]$.
\item $w=(1,0)$: The $\phi$-fixed ideals are $0,\langle xy,xy^2,xy^3\rangle,k[S]$.
\item $w=(0,1)$: \\
When $p^e=2$, the $\phi$-fixed ideals are $0,\langle x,xy \rangle,\langle x,xy,xy^2 \rangle$. \\
When $p^e \geq 3$, the $\phi$-fixed ideals are $0,\langle x,xy,xy^2 \rangle$.
\end{itemize}
\end{example}

\section{Intermediate adjoint ideals for triples}
\label{secIntAdjointIdealsTrio}

Suppose that $X$ is a normal variety and that $Z$ is a closed subset of $X$ such that $X \setminus Z$ is dense in $X$ and that $\Delta$ is a (often effective) $\bQ$-divisor such that $K_X + \Delta$ is $\bQ$-Cartier. Let $\ba$ is an ideal sheaf on $X$. Assume that the triple $(X, \Delta, \ba^t)$ admits a log resolution.

We consider log resolutions $\pi :  X' \to X$ of $X, Z, \Delta,\ba^t$ with $\ba\cdot\O_{X'}=\O_{X'}(-G)$.  In this context, we can write $K_{ X'} - \pi^* (K_X + \Delta)-tG = \sum_{i} a_i E_i$ (as is standard, for any proper birational map $\pi : \tld{X} \to X$, we assume that $\pi_* K_{\tld X} = K_X$).  For each such log resolution $\pi$, we have the following set of (possibly fractional) ideals:
\begin{align*}
 \sI_{Z}^{\pi}(X, \Delta,\ba^t)
= & \{ \pi_* \O_{ X'}(\lceil K_{X'} - \pi^*(K_X + \Delta) -tG+ E\rceil) | \\
& E \text{ is a reduced divisor satisfying $\pi(E) \subseteq Z$}  \\
& \text{also such that each component $E_i$ of $E$ has associated $a_i \in \mathbb{Z}$.}  \}
\end{align*}
\begin{definition}
We define
\[
\sI_{Z}(X, \Delta,\ba^t) := \bigcup_{\pi} \sI_Z^{\pi}(X, \Delta,\ba^t).
\]
We call this set the set of \emph{intermediate adjoint ideals with respect to $Z$}.
\end{definition}
We also give an alternative characterization of $\sI_Z^{\pi}(X, \Delta,\ba^t)$.
\begin{lemma}
\label{lemAltCharInterAdj}
The set of ideals $\sI_Z^{\pi}(X, \Delta,\ba^t)$ is equal to
\[
\begin{split}
 \{ \pi_* \O_{X'}(\lceil K_{X'} - \pi^*(K_X + \Delta) - tG+\varepsilon E\rceil) | E \text{ an effective divisor satisfying $\pi(E) \subseteq Z$} \\
\text{$\varepsilon$ satisfying $1 \gg \varepsilon > 0$.} \}
\end{split}
\]

\end{lemma}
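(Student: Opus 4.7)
The plan is to prove both inclusions by a direct coefficient-by-coefficient comparison, exploiting the fact that for a real number $a$ and a small positive $\delta$, $\lceil a + \delta \rceil$ equals $\lceil a \rceil$ unless $a$ is already an integer, in which case it equals $a+1$. Write
\[
K_{X'} - \pi^*(K_X+\Delta) - tG = \sum_i a_i E_i,
\]
where $E_i$ ranges over all prime divisors on $X'$ (so $a_i = 0$ for all but finitely many $i$).

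For the inclusion $\sI_Z^{\pi}(X,\Delta,\ba^t) \subseteq \{\text{RHS ideals}\}$: given a reduced divisor $E = \sum_{i \in J} E_i$ with $\pi(E) \subseteq Z$ and $a_i \in \mathbb{Z}$ for every $i \in J$, I simply take the \emph{same} divisor $E$ (now viewed as an effective divisor) and any $\varepsilon$ with $0 < \varepsilon < 1$. Then a componentwise computation shows
\[
\lceil a_i + \varepsilon\rceil = a_i + 1 = \lceil a_i \rceil + 1 \quad \text{for } i \in J,
\]
\[
\lceil a_i \rceil \quad \text{for } i \notin J,
\]
so that $\lceil K_{X'}-\pi^*(K_X+\Delta)-tG+\varepsilon E\rceil = \lceil K_{X'}-\pi^*(K_X+\Delta)-tG + E\rceil$, yielding the same pushforward ideal.

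For the reverse inclusion: given an effective divisor $E = \sum_i b_i E_i$ with $\pi(E) \subseteq Z$ and a sufficiently small $\varepsilon > 0$, define the reduced divisor
\[
F = \sum_{\substack{b_i > 0 \\ a_i \in \mathbb{Z}}} E_i.
\]
Clearly $\pi(F) \subseteq \pi(E) \subseteq Z$ and every component of $F$ has integer $a_i$, so $F$ is of the type allowed in the definition of $\sI_Z^{\pi}(X,\Delta,\ba^t)$. Choosing $\varepsilon$ small enough that $\varepsilon b_i < 1 - \{a_i\}$ whenever $b_i > 0$ and $a_i \notin \mathbb{Z}$ (possible because only finitely many $b_i$ are nonzero), a componentwise check gives
\[
\lceil a_i + \varepsilon b_i \rceil =
\begin{cases}
a_i + 1, & b_i > 0 \text{ and } a_i \in \mathbb{Z}, \\
\lceil a_i \rceil, & \text{otherwise,}
\end{cases}
\]
and this is exactly the coefficient of $E_i$ in $\lceil K_{X'}-\pi^*(K_X+\Delta)-tG+F\rceil$. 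Hence the two pushforward ideals coincide, placing the RHS ideal inside $\sI_Z^{\pi}(X,\Delta,\ba^t)$.

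There is no serious obstacle here; the only point that requires a moment's care is ensuring that the choice of $\varepsilon$ can be made uniformly small across the finitely many components of $E$ with $b_i > 0$, which is straightforward because only finitely many prime divisors appear. The essential content is just the elementary identity $\lceil a + \delta\rceil = \lceil a\rceil + \mathbf{1}_{a \in \mathbb{Z}}$ for $0 < \delta \ll 1$, applied divisor by divisor.
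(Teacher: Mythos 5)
Your proof is correct and makes explicit the coefficient-by-coefficient ceiling comparison that the paper dismisses with ``It is obvious.'' You are spelling out the elementary observation $\lceil a + \delta\rceil = \lceil a\rceil$ unless $a \in \mathbb{Z}$ (in which case it is $a+1$) for $0 < \delta \ll 1$, which is precisely the computation underlying the paper's claim.
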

\begin{proof}
It is obvious.
\end{proof}

We also recall several related definitions; the \emph{multiplier ideal} and the \emph{maximal non-LC ideal}.

\begin{definition}\cite{LazarsfeldPositivity2,FujinoSchwedeTakagiSupplements,KawamataSubadjunction2}
Suppose that $X$ is a normal variety over a field of characteristic zero, that $\Delta$ is an effective $\bQ$-divisor such that $K_X + \Delta$ is $\bQ$-Cartier, $\ba \subseteq \O_X$ is an ideal sheaf and $t \geq 0$ is a real number.  Further suppose that $\pi : X' \to X$ is a log resolution of $(X, \Delta, \ba)$ where we set $\ba \cdot \O_{X'} = \O_{X'}(-G)$.
\begin{itemize}
\item{} The \emph{multiplier ideal} $\mJ(X, \Delta, \ba^t)$ is defined to be \[
\pi_* \O_{X'}(\lceil K_{X'} - \pi^* (K_X + \Delta) - tG \rceil)\]
This ideal is independent of the choice of resolution $\pi$.  If $\mJ(X, \Delta, \ba^t) = \O_X$, then $(X, \Delta, \ba^t)$ is said to have \emph{Kawamata log terminal (or klt)} singularities.
\item{}  If $E$ the reduced divisor on $X'$ with support equal to $\Supp(G) \cup \Supp(\pi^{-1}_* \Delta) \cup \exc(\pi)$, then the \emph{maximal non-LC-ideal} $\mJ'(X, \Delta, \ba^t)$ is defined to be
    \[
    \pi_* \O_{X'}(\lceil K_{X'} - \pi^* (K_X + \Delta) - tG  + \varepsilon E\rceil )
    \]
    where $\varepsilon > 0$ is arbitrarily small.  This ideal is independent of the choice of resolution $\pi$ assuming that $1 \gg \varepsilon > 0$.  If $\mJ'(X, \Delta, \ba^t) = \O_X$, then $(X, \Delta, \ba^t)$ is said to have \emph{log canonical (or lc)} singularities.
\item{}  If $W \subseteq X$ is an irreducible closed subvariety, then $W$ is said to be a \emph{log canonical center} (or \emph{lc center}) of $(X, \Delta, \ba^t)$ if $(X, \Delta, \ba^t)$ is lc at the generic point of $W$ but not klt at the generic point of $W$.
\end{itemize}
\end{definition}

We prove a number of basic results about the sets $\sJ^{\pi}_Z(X, \Delta,\ba^t)$ and $\sJ_Z(X, \Delta,\ba^t)$.

\begin{lemma}
\label{LemPropertiesOfInterAdj}
Suppose that $X$, $Z$, $\Delta$, $\ba$ and $\pi : X' \to X$ are as above.  Then the following hold:
\begin{itemize}
\item[(i)]  If $Y \subseteq Z$ is closed and $\pi$ is also a log resolution of $Y$, then $\sI_Y^{\pi}(X, \Delta,\ba^t) \subseteq \sI_Z^{\pi}(X, \Delta ,\ba^t)$.
\item[(ii)]  For any Cartier divisor $L$ such that $\pi$ is a log resolution of $\Delta$ and $\Delta + L$, we have that
\[
\{ \O_X(-L) \tensor I | I \in \sI_Z^{\pi}(X, \Delta,\ba^t) \} = \sI_Z^{\pi}(X, \Delta+L,\ba^t)	
\]
\item[(iii)]  $\sI_Z^{\pi}(X, \Delta,\ba^t) = \{ \pi_* J | J \in \sI_{\pi^{-1}(Z)}^{\id}(X', \pi^*(K_X + \Delta) - K_{X'},(\ba\cdot\O_{X'})^t ) \}$.
\item[(iv)]   Suppose that $\pi' : X'' \to X'$ is a proper birational map such that $\pi \circ \pi' : X'' \to X$ is also a log resolution.  Then $\sI_Z^{\pi \circ \pi'}(X, \Delta,\ba^t) \supseteq \sI_Z^{\pi}(X, \Delta,\ba^t)$.
\item[(v)]  For any proper birational map $\tau : Y \to X$ with $Y$ normal, then
\[
\sI_Z(X, \Delta,\ba^t) = \{ \pi_* I | I \in \sI_{\tau^{-1}(Z)}(Y, -K_Y + \tau^*(K_X + \Delta),(\ba\cdot \O_Y)^t) \}.
\]
\item[(vi)]  If $\Delta$ is effective, $(X, \Delta,\ba^t)$ is log canonical and $Z$ contains the non-Kawamata-log terminal locus of $(X, \Delta,\ba^t)$ and also satisfies $Z \subseteq \Sing(X) \cup \Supp(\Delta)\cup \Supp(G)$, then $\sI_Z(X, \Delta,\ba^t)$ is the set of ideals defining all unions of log canonical centers of $(X, \Delta,\ba^t)$.
\item[(vii)]  If $Z = \Sing X \cup \Supp \Delta \cup V(\ba)$ and $\Delta \geq 0$, then the unique smallest element of $\sI_Z(X, \Delta,\ba^t)$ is the multiplier ideal $\mJ(X, \Delta,\ba^t)$ and the
unique largest element is the maximal non-LC-ideal $\mJ'(X, \Delta,\ba^t)$ of \cite{FujinoSchwedeTakagiSupplements}.
\item[(viii)]  The set $\sI_Z(X, \Delta,\ba^t)$ is finite.
\item[(ix)]  The set $\sI_Z(X, \Delta,\ba^t)$ is closed under intersection.
\end{itemize}
\end{lemma}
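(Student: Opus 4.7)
The plan is to reduce all nine parts to three basic tools: (a) the projection formula; (b) the standard identity $f_{*}\O_{Y}(\lceil K_{Y/X'}+f^{*}\Theta\rceil)=\O_{X'}(\lceil\Theta\rceil)$ for a proper birational morphism $f\colon Y\to X'$ of smooth varieties and any $\bQ$-divisor $\Theta$ on $X'$ (the independence-of-resolution device underlying the theory of multiplier ideals); and (c) the alternate characterization from Lemma \ref{lemAltCharInterAdj}, which lets one replace ``reduced $E$ with components of integer discrepancy'' by ``arbitrary effective $E$ with $\varepsilon$ small''. Parts (i) and (ii) are then immediate from the definition: (i) since $\pi(E)\subseteq Y\subseteq Z$, and (ii) since $\pi^{*}L$ is integral and passes through $\lceil\cdot\rceil$, after which the projection formula yields the equality of sets. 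For (vii), Lemma \ref{lemAltCharInterAdj} shows that $E=0$ yields $\mJ(X,\Delta,\ba^t)$ (the smallest element) while $E$ supported on the full divisor $\Supp(G)\cup\Supp(\pi^{-1}_{*}\Delta)\cup\Exc(\pi)$ yields $\mJ'(X,\Delta,\ba^t)$ (the largest).

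The heart of the argument is (iv). Using the alternate characterization, I would pick an effective $E$ on $X'$ with $\pi(E)\subseteq Z$ and set $E'':=(\pi')^{*}E$ on $X''$; then $(\pi\circ\pi')(E'')=\pi(E)\subseteq Z$. The identity
\[
K_{X''}-(\pi\pi')^{*}(K_{X}+\Delta)-tG''+\varepsilon E''\ =\ K_{X''/X'}+(\pi')^{*}\bigl(K_{X'}-\pi^{*}(K_{X}+\Delta)-tG+\varepsilon E\bigr)
\]
combined with tool (b) gives
\[
(\pi')_{*}\O_{X''}\bigl(\lceil K_{X''}-(\pi\pi')^{*}(K_{X}+\Delta)-tG''+\varepsilon E''\rceil\bigr)\ =\ \O_{X'}\bigl(\lceil K_{X'}-\pi^{*}(K_{X}+\Delta)-tG+\varepsilon E\rceil\bigr),
\]
and applying $\pi_{*}$ to both sides shows $\sI_{Z}^{\pi}\subseteq\sI_{Z}^{\pi\circ\pi'}$. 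Part (iii) then follows by taking the identity as the resolution on $X'$ with the triple $(X',\ \pi^{*}(K_X+\Delta)-K_{X'},\ (\ba\cdot\O_{X'})^t)$; part (v) by dominating both $\tau$ and $\pi$ with a common higher log resolution and invoking (iii) and (iv) on both sides; and part (viii) because (iv) reduces everything to counting reduced sub-divisors of the fixed divisor $(\pi^{-1}(Z))^{\mathrm{red}}$ on a single sufficiently high log resolution, of which there are only finitely many.

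For (vi), in the lc setting the log canonical centres are exactly the images of the discrepancy-$(-1)$ components on a sufficiently high log resolution, and these automatically map into $Z$ because the non-klt locus is contained in $Z$. Varying $E$ over reduced sums of such components then yields exactly the ideals cutting out arbitrary unions of lc centres, by standard subadjunction/Koll\'ar-type arguments (\cf \cite{FujinoSchwedeTakagiSupplements,KawamataSubadjunction2}). For (ix), I would fix a common high log resolution on which both $I_1$ and $I_2$ arise as $\pi_{*}\O_{X'}(\lceil\Lambda+E_j\rceil)$ with $\Lambda:=K_{X'}-\pi^{*}(K_{X}+\Delta)-tG$; then
\[
I_1\cap I_2=\pi_{*}\O_{X'}(\lceil\Lambda+(E_1\wedge E_2)\rceil),
\]
where $E_1\wedge E_2$ denotes the component-wise minimum (still reduced, with image in $Z$ and with integer discrepancies along its components), which lies in $\sI_{Z}^{\pi}$. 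The single genuine technical obstacle is the discrepancy computation in (iv); everything else is bookkeeping with $\lceil\cdot\rceil$, the projection formula, and the $\varepsilon$-form of the definition supplied by Lemma \ref{lemAltCharInterAdj}.
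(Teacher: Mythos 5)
Your handling of (i)--(vii) and (ix) tracks the paper's proof closely: (i), (iii), (vi), (vii) are essentially the paper's ``immediate from the definition'' steps spelled out, (ii) is the same projection-formula argument, (iv) is the same application of \cite[Lemma 9.2.19]{LazarsfeldPositivity2} via the $\varepsilon$-form of Lemma \ref{lemAltCharInterAdj}, (v) is the same ``dominate by a common resolution'' reduction, and (ix) is the same computation via the componentwise minimum $E_1 \wedge E_2$ on a common log resolution. These are all fine.

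However, your argument for (viii) has a genuine gap, and it is in fact circular. By (iv), the set $\sI_Z(X,\Delta,\ba^t) = \bigcup_\pi \sI_Z^\pi$ is a directed union of finite sets that increases as $\pi$ gets higher. Your claim that a \emph{single} sufficiently high log resolution captures the entire union is precisely equivalent to the finiteness you are trying to prove: if the union is finite then it stabilizes at some $\pi$, and conversely if it stabilizes at some $\pi$ it is finite. Nothing in (iv) alone rules out the possibility that higher and higher log resolutions keep producing genuinely new ideals --- indeed, if $(X,\Delta,\ba^t)$ is far from log canonical, there can be infinitely many exceptional divisors over $X$ with integer discrepancy $\leq -1$, and each new one could potentially change the pushforward. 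The paper's proof avoids this by first using (v) to reduce to $X$ smooth with $\ba \cdot \O_X$ and the divisor in simple normal crossings, then absorbing $\ba$ into the boundary, then using (ii) to subtract off a Cartier divisor so that all coefficients lie in $[0,1]$. After these reductions the pair is log canonical, and the ideals in $\sI_Z$ correspond to unions of log canonical centers, which are finite in number (\cf \cite{AmbroBasicPropertiesOfLCCenters}). You need some version of this reduction; without it the assertion that one resolution suffices is unsupported.

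A smaller note: in (vi) the statement really is a direct unwinding of the definition once one observes that a component $E_i$ with discrepancy $a_i$ changes $\lceil a_i \rceil$ to $\lceil a_i + \varepsilon \rceil$ exactly when $a_i$ is an integer, and in the lc setting the only negative integer discrepancy is $-1$; the invocation of subadjunction/Koll\'ar-type vanishing is heavier machinery than is needed here.
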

\begin{proof}
(i) is obvious.  (ii) is a direct consequence of the projection formula.  (iii) is clear from the definition.  To prove (iv), notice that for any divisor $E$ on $X'$ such that $\Supp E \subseteq \pi^{-1}(Z)$, we have that
\[
\O_{X'}(\lceil K_{X'} - \pi^*(K_X + \Delta) -tG+ \varepsilon E \rceil) = \pi'_* \O_{X''}(\lceil K_{X''} - \pi'^* \pi^*(K_X + \Delta) -t\pi'^{*}G+ \varepsilon \pi'^*  E) \rceil)
\]
by \cite[Lemma 9.2.19]{LazarsfeldPositivity2}.  The result then immediately follows from Lemma \ref{lemAltCharInterAdj}.  For (v), simply notice that by (iv), we may restrict ourselves to $\pi : X' \to X$ which factor through $\tau$.    (vi) is obvious from the definition.   (vii) is the definition of the multiplier ideal and maximal non-LC-ideal respectively.

Now we prove (viii).  By (v), we may assume that $X$ is smooth and that the support of $\ba$ is a divisor with simple normal crossings with $-K_Y + \tau^*(K_X + \Delta)$.  Thus we may incorporate $\ba$ into the divisor term.  On the other hand using (ii), we may assume that the coefficients of this divisor are between $0$ and $1$.  Thus we have reduced our situation to simply proving the finiteness of the set of log canonical centers, which is well known \cf \cite{AmbroBasicPropertiesOfLCCenters}.
For (ix), suppose that $I$ and $J$ are contained in $\sI_Z(X, \Delta,\ba^t)$.  By (iv), we may realize both ideals as push forward from a single log resolution, say $I = \pi_* \O_{X'}(\lceil K_{X'} - \pi^*(K_X + \Delta) -tG+ \varepsilon E \rceil)$ and $J = \pi_* \O_{X'}(\lceil K_{X'} - \pi^*(K_X + \Delta) -tG+ \varepsilon F \rceil)$ for some sufficiently small positive $\varepsilon$ where $E$ and $F$ are reduced divisors with image contained in $Z$. Let $E\wedge F$ denote the sum of the common components of $E$ and $F$, \ie, we can write $E=E\wedge F+E'$ and $F=E\wedge F+F'$ where $\Supp(E\wedge F)$, $\Supp(E')$ and $\Supp(F')$ are componentwise disjoint. Note that $E\wedge F$ is also a reduced divisor with $\pi(E\wedge F)\subseteq Z$. We will show that $I \cap J = \pi_* \O_{X'}(\lceil K_{X'} - \pi^*(K_X + \Delta)-tG + \varepsilon (E \wedge F) \rceil )$.

For the $\subseteq$ direction, suppose that $g \in I \cap J$, then $\Div_{X'}(g) + \lceil K_{X'} - \pi^*(K_X + \Delta)-tG + \varepsilon E \rceil \geq 0$ and $\Div_{X'}(g) + \lceil K_{X'} - \pi^*(K_X + \Delta)-tG + \varepsilon F \rceil \geq 0$.  Since $\Supp(E\wedge F)$, $\Supp(E')$ and $\Supp(F')$ are componentwise disjoint, it then immediately follows that
\[\Div_{X'}(g) + \lceil K_{X'} - \pi^*(K_X + \Delta)-tG + \varepsilon (E \wedge F)\rceil  \geq 0.\]  Conversely, suppose that $g \in \pi_* \O_{X'}(\lceil K_{X'} - \pi^*(K_X + \Delta) -tG+ \varepsilon (E \wedge F) \rceil )$.  Thus $\Div_{X'}(g) + \lceil K_{X'} - \pi^*(K_X + \Delta)-tG + \varepsilon (E \wedge F)\rceil \geq 0$ which immediately implies that $g \in I$ and $g \in J$ as well since $E \wedge F \leq E, F$.
\end{proof}

We now describe a method for producing intermediate adjoint ideals which will be crucial in the next section.

\begin{proposition}
\label{PropHowToConstructInterAdj}
Suppose that $(X = \Spec R, \Delta, \ba^t)$ is a triple, that $Z = \Sing X \cup \Supp \Delta \cup V(\ba)$ and $\Delta \geq 0$.  Then for any $0 \neq \bb \subseteq R$, $\mJ'(X, \Delta, \ba^t\bb^{\delta}) \in \sI_Z(X, \Delta,\ba^t)$ for sufficiently small $\delta > 0$.
\end{proposition}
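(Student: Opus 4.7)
The plan is to realize the maximal non-LC ideal directly as a push-forward coming from a log resolution of $(X,\Delta,\ba)$, identifying it as an element of $\sI_Z^\pi$ via Lemma~\ref{lemAltCharInterAdj}. First I would fix a log resolution $\pi : X' \to X$ of $(X,Z,\Delta,\ba\bb)$ that is an isomorphism over $X\setminus Z$; this is possible because $Z\supseteq \Sing X$ forces $X\setminus Z$ to be smooth, so no divisor need be extracted there. Write $\ba\cdot\O_{X'}=\O_{X'}(-G)$, $\bb\cdot\O_{X'}=\O_{X'}(-H)$ with $H=\sum m_i F_i$, and decompose $K_{X'}-\pi^*(K_X+\Delta)-tG = \sum_i a_i F_i$ into distinct prime components. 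Then by the definition of $\mJ'$ (and its independence of the choice of resolution for $0<\varepsilon\ll 1$),
\[
\mJ'(X, \Delta, \ba^t \bb^\delta) = \pi_*\O_{X'}\bigl( \lceil \textstyle\sum_i a_i F_i - \delta H + \varepsilon E\rceil \bigr),
\]
where $E$ is the reduced divisor with $\Supp E = \Supp G \cup \Supp H \cup \Supp \pi^{-1}_*\Delta \cup \exc(\pi)$.

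The natural candidate to plug into Lemma~\ref{lemAltCharInterAdj} is the effective divisor
\[
F := \sum_{F_i\subseteq\Supp E,\; a_i\in\bZ,\; m_i = 0} F_i.
\]
Its components avoid $\Supp H$ and therefore lie in $\Supp G \cup \Supp\pi^{-1}_*\Delta \cup \exc(\pi)$, so $\pi(F)\subseteq V(\ba)\cup\Supp\Delta\cup \Sing X = Z$; in particular $F$ is admissible for Lemma~\ref{lemAltCharInterAdj}. Choosing $\delta>0$ small enough that $\lceil a_i - \delta m_i\rceil = \lceil a_i\rceil$ for every non-integer $a_i$, and subsequently $\varepsilon>0$ with $\varepsilon<\delta m_i$ for every $m_i>0$, a componentwise comparison would establish
\[
\lceil \textstyle\sum_i a_i F_i - \delta H + \varepsilon E\rceil \;=\; \lceil\textstyle\sum_i a_i F_i + \varepsilon' F\rceil
\]
for any $\varepsilon'\in(0,1)$, by splitting into four cases on a given $F_i$: if $F_i\not\subseteq\Supp E$ then $a_i=m_i=0$ and both sides give $0$; if $F_i \subseteq \Supp E$ with $a_i\notin\bZ$ both sides give $\lceil a_i\rceil$; if $a_i\in\bZ$ with $m_i>0$ then $\varepsilon<\delta m_i$ makes the left side $a_i$ and $F_i\notin F$ makes the right side $a_i$; finally if $a_i\in\bZ$ with $m_i=0$ then the $+\varepsilon$ on the left yields $a_i+1$ and $F_i\in F$ gives the same on the right. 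Pushing forward and applying Lemma~\ref{lemAltCharInterAdj} would then give $\mJ'(X,\Delta,\ba^t\bb^\delta)\in\sI_Z^\pi(X,\Delta,\ba^t)\subseteq\sI_Z(X,\Delta,\ba^t)$.

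The main subtlety is the strict ordering of the auxiliary parameters: one must first shrink $\delta$ so that $-\delta H$ does not push any non-integer $a_i$ across an integer boundary, and then shrink $\varepsilon$ so that $\varepsilon<\delta m_i$ whenever $m_i>0$, while keeping $\varepsilon$ strictly positive to force the rounding-up on components with $m_i=0$. The only geometric input beyond this bookkeeping is the choice of $\pi$ as an isomorphism over $X\setminus Z$, which guarantees $\pi(\exc(\pi))\subseteq Z$ and hence $\pi(F)\subseteq Z$.
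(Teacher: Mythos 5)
Your overall strategy -- realize $\mJ'(X,\Delta,\ba^t\bb^{\delta})$ directly as $\pi_*\O_{X'}(\lceil K_{X'}-\pi^*(K_X+\Delta)-tG+\varepsilon F\rceil)$ for a suitable effective $F$ and invoke Lemma~\ref{lemAltCharInterAdj} -- is exactly the paper's, and your ceiling bookkeeping is carried out carefully.  The gap is in the step you flagged yourself as the ``only geometric input'': the claim that you can choose a log resolution $\pi$ of $(X,Z,\Delta,\ba\bb)$ that is an isomorphism over $X\setminus Z$.  This is false in general.  The ideal $\bb$ is an \emph{arbitrary} nonzero ideal of $R$; its zero locus $V(\bb)$ need not be contained in $Z=\Sing X\cup\Supp\Delta\cup V(\ba)$, and to have $\bb\cdot\O_{X'}$ invertible (which you need to even write down $\mJ'(X,\Delta,\ba^t\bb^\delta)$ from this model) you must in general blow up over points of $X\setminus Z$.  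Once that happens your proposed
\[
F=\sum_{\;F_i\subseteq\Supp E,\;a_i\in\bZ,\;m_i=0\;}F_i
\]
may pick up exceptional divisors lying over $X\setminus Z$ (for instance an extraneous blowup of a smooth center disjoint from $Z\cup V(\bb)$, which is still permitted in a log resolution, has $m_i=0$ and $a_i=\mathrm{codim}-1\in\bZ$).  For such an $F$, $\pi(F)\not\subseteq Z$, so $F$ is inadmissible and the containment $\mJ'(X,\Delta,\ba^t\bb^\delta)\in\sI_Z^\pi(X,\Delta,\ba^t)$ does not follow.

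The paper sidesteps this by choosing a different divisor to perturb by.  Rather than the full reduced divisor $E$ from the definition of $\mJ'$, it sets
\[
E=\Supp\bigl(\pi^*(K_X+\Delta)+tG+\delta H-K_{X'}\bigr)^{\geq 1},
\]
i.e.\ the reduced divisor supported on those $F_i$ whose coefficient $a_i$ in $K_{X'}-\pi^*(K_X+\Delta)-tG$ is $\leq -1$ (for small $\delta$).  Over any point of $X\setminus Z$ the variety is smooth and $\Delta$, $\ba$ are trivial, so every divisor over such a point has log discrepancy $\geq 0$; hence a coefficient $\leq -1$ forces $\pi(F_i)\subseteq Z$, without any assumption on $\pi$ over $X\setminus Z$.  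It then takes $F$ to be the components of this $E$ not appearing in $\Supp H$, for which $\pi(F)\subseteq Z$ is automatic, and verifies a ceiling identity analogous to yours (the ordering $1\gg\delta\gg\varepsilon>0$ plays the same role as your $\varepsilon<\delta m_i$).  So the fix is: replace your $F$ with the $m_i=0$ part of the $a_i\leq -1$ locus and argue $\pi(F)\subseteq Z$ via the discrepancy bound, rather than via any smallness property of $\exc(\pi)$.
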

\begin{proof}
Choose a log resolution $\pi : X' \to X$ of $(X, \Delta, \ba^t\bb^{\delta})$ such that $\ba \cdot \O_{X'} = \O_{X'}(-G)$ and $\bb \cdot \O_{X'}=\O_{X'}(-H)$.  Set
\[E = \Supp(\pi^*(K_X+\Delta) + tG+ \delta H - K_{X'})^{\geq 1}.\]
Note that $E=\Supp(\pi^*(K_X+\Delta) + tG -K_{X'})^{\geq 1}$ for sufficiently small $\delta>0$, and hence $\pi(E)\subseteq Z$. By definition,
\[
\mJ'(X, \Delta, \ba^t\bb^{\delta} )  = \pi_* \O_{X'}(\lceil K_{X'} - \pi^*(K_X + \Delta ) - tG - \delta H + \varepsilon E \rceil)
\]
where $0 < \varepsilon \ll \delta$.  Set $F$ to be the reduced divisor made up of all components of $E$ that do not appear in $\Supp(H)$ (it might be that $F = 0$).  Then we claim that $\pi(F)\subseteq Z$ (which is obvious) and also that
\begin{equation}
\label{eqnEqualityOfRounds}
\lceil K_{X'} - \pi^*(K_X + \Delta) - tG - \delta H + \varepsilon E \rceil = \lceil K_{X'} - \pi^*(K_X + \Delta) -tG + \varepsilon F \rceil
\end{equation}
again for $\delta \gg \varepsilon > 0$ sufficiently small.  This will complete the proof.  To see this second claim, consider $a_i$, the $E_i$-coefficient of $K_{X'} - \pi^*(K_X + \Delta) - tG$, where $E_i$ is a component of $E$.  There are two cases, if $E_i$ is a component of $H$, then before rounding up, the $E_i$-coefficient of the left side of Equation \ref{eqnEqualityOfRounds} is $a_i - \delta + \varepsilon$ while the $E_i$-coefficient of the right side is $a_i$, these have the same round-up since $1 \gg \delta \gg \varepsilon > 0$.  On the other hand, if $E_i$ does not appear in $H$, then along $E_i$, Equation \ref{eqnEqualityOfRounds} is already an equality and there is nothing to show.
\end{proof}

\section{Intermediate adjoint ideals on toric varieties}
\label{secIntAdjointOnToric}
In this section we will work only with toric varieties.
Throughout this section, we fix $M$ to be a lattice and $\sigma$ to be a rational convex polyhedral cone in $M_{\bR}$, set $S = \sigma \cap M$. Let $X$ be the affine toric variety $\Spec k[S]$ and suppose that $\Delta \geq 0$ is a torus-invariant $\bQ$-divisor on $X$ such that $K_X + \Delta$ is $\bQ$-Cartier. Let $\ba$ be a toric ideal of $\Spec k[S]$ and $t$ be a nonnegative real number.
Set $Z = \Supp \Delta \cup \Sing X \cup V(\ba)$, in this section we describe the set of ideals $\sI_Z(X, \Delta, \ba^t)$.

\begin{lemma}
The ideals of $\sI_Z(X, \Delta, \ba^t)$ are torus invariant.
\end{lemma}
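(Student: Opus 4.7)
The plan is to prove this via a finiteness-plus-connectedness argument: the torus $T=\Spec k[M]$ acts on the finite set $\sI_Z(X,\Delta,\ba^t)$, and since $T$ is connected, the action must be trivial. Concretely, the proof has two steps: first check that the torus sends intermediate adjoint ideals to intermediate adjoint ideals, and second conclude that each orbit is a single point.

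For the first step, fix $\lambda\in T$ and let $\sigma_\lambda\:X\to X$ be translation by $\lambda$. Because $\Delta$ is torus-invariant and $\ba$ is a toric ideal, $\sigma_\lambda^*(K_X+\Delta)=K_X+\Delta$ as $\bQ$-Cartier divisor classes, $\sigma_\lambda^*\ba=\ba$, and $\sigma_\lambda(Z)=Z$. Given any log resolution $\pi\:X'\to X$ of $(X,\Delta,\ba)$ with $\ba\cdot\O_{X'}=\O_{X'}(-G)$, the composition $\sigma_\lambda\circ\pi\:X'\to X$ is again a log resolution of the same data, since $\sigma_\lambda$ is an automorphism and the relevant divisor and sheaf data are preserved. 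Given $I=\pi_*\O_{X'}(\lceil K_{X'}-\pi^*(K_X+\Delta)-tG+\varepsilon E\rceil)\in \sI_Z^{\pi}(X,\Delta,\ba^t)$, the pushforward satisfies
\[
\sigma_{\lambda*}I=(\sigma_\lambda\circ\pi)_*\O_{X'}\bigl(\lceil K_{X'}-(\sigma_\lambda\circ\pi)^*(K_X+\Delta)-tG+\varepsilon E\rceil\bigr),
\]
and $(\sigma_\lambda\circ\pi)(E)=\sigma_\lambda(\pi(E))\subseteq\sigma_\lambda(Z)=Z$. Thus $\sigma_{\lambda*}I$ lies in $\sI_Z^{\sigma_\lambda\circ\pi}(X,\Delta,\ba^t)\subseteq \sI_Z(X,\Delta,\ba^t)$, showing that $T$ acts on $\sI_Z(X,\Delta,\ba^t)$.

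For the second step, pick any $I\in \sI_Z(X,\Delta,\ba^t)$ with finite generating set $f_1,\dots,f_n$. Write $\operatorname{Stab}(I):=\{\lambda\in T : \sigma_{\lambda*}I=I\}$. For each fixed $f\in R=k[S]$, the condition $\sigma_{\lambda*}f\in I$ is cut out by polynomial equations in $\lambda$ (the coefficients of $\sigma_{\lambda*}f$ in a basis complementary to $I$ must vanish), so $\{\lambda : \sigma_{\lambda*}I\subseteq I\}=\bigcap_{i}\{\lambda : \sigma_{\lambda*}f_i\in I\}$ is closed in $T$; applying the same reasoning to $\lambda^{-1}$ shows $\operatorname{Stab}(I)$ is a closed subgroup. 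By Lemma \ref{LemPropertiesOfInterAdj}(viii), the orbit $T\cdot I\subseteq \sI_Z(X,\Delta,\ba^t)$ is finite, so $T/\operatorname{Stab}(I)$ is finite. Since $T$ is irreducible (hence connected), this forces $\operatorname{Stab}(I)=T$, i.e. $\sigma_{\lambda*}I=I$ for all $\lambda\in T$. Therefore $I$ is torus-invariant. The only step requiring care is verifying that the $T$-action preserves the defining data of $\sI_Z^\pi$, but this is immediate from toric-invariance of $\Delta$, $\ba$, and $Z$; the finiteness result (viii) then does all the heavy lifting.
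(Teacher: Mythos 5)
Your proof is correct, but it takes a genuinely different route from the paper's. The paper argues \emph{structurally}: it uses parts (v) and (ii) of Lemma~\ref{LemPropertiesOfInterAdj} to push the question up to a toric log resolution $X'$, twists to normalize coefficients, and then reduces to the statement that unions of log canonical centers of a torus-invariant pair on a smooth toric variety are torus invariant (a combinatorial fact about the strata of a toric SNC divisor). You instead run a \emph{symmetry-plus-finiteness} argument: establish that translation by $\lambda\in T$ permutes $\sI_Z(X,\Delta,\ba^t)$, observe from Lemma~\ref{LemPropertiesOfInterAdj}(viii) that this set is finite, show each stabilizer is a closed subgroup of finite index, and conclude by connectedness of $T$ that the action is trivial. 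What your approach buys is that it sidesteps any explicit description of the ideals or of lc centers --- it only needs that the defining data $(X,\Delta,\ba,Z)$ is $T$-invariant, plus the finiteness result. What the paper's approach buys is more structural information (it effectively identifies the ideals with combinatorial strata, which the paper will exploit elsewhere). Two small remarks on polish: your closedness claim for $\{\lambda:\sigma_{\lambda*}f\in I\}$ is best phrased via the finite-dimensional $T$-stable subspace $V$ spanned by the monomials appearing in $f$, inside which $V\cap I$ is a linear subspace and $\lambda\mapsto\sigma_{\lambda*}f$ is a morphism $T\to V$; and for a non-algebraically-closed (in particular finite) ground field the conclusion should be drawn at the level of the group scheme $T$ rather than from $T(k)$ alone (equivalently, base change to $\bar k$ first, or note that the sets $T_{I,J}=\{\lambda:\sigma_{\lambda*}I=J\}$ for $J\in\sI_Z$ give a finite partition of $T$ into closed subschemes, so connectedness forces $T_{I,I}=T$). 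Neither of these affects the soundness of the idea.
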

\begin{proof}
First suppose that $\pi : X' \to X$ is a toric log resolution.  By Lemma \ref{LemPropertiesOfInterAdj}(v), it is enough to show that
$\sI_{\pi^{-1} Z}(X', -K_{X'} + \pi^*(K_X + \Delta), \ba^t\cdot \O_{X'})$ is a collection of torus invariant (fractional) ideals since the push forward of a torus invariant fractional ideal is still torus invariant.
However, using the trick of \ref{LemPropertiesOfInterAdj}(ii), it is easy to see that, up to twisting by a line bundle on, $\sI_{\pi^{-1} Z}(X', -K_{X'} + \pi^*(K_X + \Delta), \ba^t\cdot \O_{X'})$ is just a finite collection of log canonical centers of a torus invariant triple on $X'$ (again, by blowing up $\ba$, one may change the question into the study of the log canonical centers of a pair).  This completes the proof.
\end{proof}

\begin{remark}
In the toric setting and any characteristic, we have a sufficiently good theory of resolution of singularities to generalize the notions from Section \ref{secIntAdjointIdealsTrio} (alternately, one could define similar notions by considering all proper birational maps instead of log resolutions).  Therefore, since in this section we work only with toric varieties, we can now work in any characteristic (either characteristic $p > 0$ or characteristic zero).
\end{remark}

We now describe the non-LC ideal sheaf $\mJ'(X, \Delta, \ba^t)$ in the toric language.
First choose $l \in \bN$ such that $l(K_X + \Delta) = \Div_X(x^{m})$ for some $m \in M$.

\begin{proposition}\label{toricnonLC}
\[\mJ'(X,\Delta,\ba^t\bb^s) = \langle  x^v \mid v - {m \over l} \in t\Newt(\ba) + s\Newt(\bb)\rangle\]
where $\Newt(\ba)$ is the Newton polygon of $\ba$.
In particular, the ideal $\mJ'(X, \Delta,\ba^t)$ is generated by the monomials $x^v$ such that $v \in \sigma$ and $v - {m \over l}\in t\Newt(\ba)$.
\end{proposition}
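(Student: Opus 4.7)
The plan is to take a toric log resolution $\pi\colon X'\to X$ of $(X,\Delta,\ba\bb)$ with $\ba\cdot\O_{X'}=\O_{X'}(-G)$ and $\bb\cdot\O_{X'}=\O_{X'}(-H)$, and compute $\mJ'(X,\Delta,\ba^t\bb^s)$ directly from the standard toric formula for the push-forward of the round-up of a torus-invariant $\bQ$-divisor. For a ray $\rho$ of the fan $\Sigma$ of $X'$ with primitive generator $u_\rho\in N$, the coefficient of $D_\rho$ in $K_{X'}-\pi^*(K_X+\Delta)-tG-sH$ works out to $-1-\langle m/l,u_\rho\rangle-t\,v_\rho(\ba)-s\,v_\rho(\bb)$, where $v_\rho(\ba):=\min_{g\in\Newt(\ba)}\langle g,u_\rho\rangle$ and analogously for $\bb$. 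Letting $E$ be the reduced divisor supported on $\exc(\pi)\cup\Supp\pi_*^{-1}\Delta\cup\Supp G\cup\Supp H$, adding $\varepsilon E$ perturbs these coefficients by $+\varepsilon$ on rays $\rho\in E$; so $x^v\in\mJ'$ translates into one integer inequality per ray $\rho\in\Sigma(1)$.

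\textbf{Case analysis on rays.} A routine floor/ceiling manipulation (using $0<\varepsilon\ll 1$ to tip integer edge-cases in the correct direction) rewrites each inequality. For $\rho\in E$, it becomes $\langle v-m/l,u_\rho\rangle\geq t\,v_\rho(\ba)+s\,v_\rho(\bb)$. For $\rho\notin E$, since all exceptional rays lie in $E$, $\rho$ must be a ray of the fan $\sigma^\vee$ of $X$, and $v_\rho(\ba)=v_\rho(\bb)=0$; together with the vanishing of the coefficient of $D_\rho$ in $\Delta$ this forces $\langle m/l,u_\rho\rangle=-1$, and the inequality collapses to $\langle v,u_\rho\rangle\geq 0$, which is automatic for $v\in S$.

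\textbf{Recognizing the Newton polytope.} The essential input is that a toric log resolution of $\ba\cdot\bb$ must refine the normal fans of both $\Newt(\ba)$ and $\Newt(\bb)$, hence their common refinement, which is the normal fan of the Minkowski sum $\Newt(\ba)+\Newt(\bb)$ (Lemma~\ref{propertiesNewt}). Consequently,
\[
t\Newt(\ba)+s\Newt(\bb)\;=\;\bigcap_{\rho\in\Sigma(1)}\bigl\{w\in M_\bR\,:\,\langle w,u_\rho\rangle\geq t\,v_\rho(\ba)+s\,v_\rho(\bb)\bigr\},
\]
and combining this halfspace description with the case analysis yields $x^v\in\mJ'(X,\Delta,\ba^t\bb^s)$ iff $v\in S$ and $v-m/l\in t\Newt(\ba)+s\Newt(\bb)$. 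The ``in particular'' statement is the specialization $\bb=\O_X$, $s=0$. The delicate bookkeeping point---and the main obstacle---is to confirm that the halfspaces associated to rays $\rho\in\sigma^\vee(1)\setminus E$ supply exactly the ``trivial'' halfspaces $\langle\cdot,u_\rho\rangle\geq 0$ of the Minkowski sum, so that membership in the Minkowski sum is captured precisely by the nontrivial inequalities from rays in $E$ together with the automatic $v\in S$.
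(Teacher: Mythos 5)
Your proposal is correct and follows essentially the same route as the paper's proof: choose a toric log resolution, reduce membership in $\mJ'(X,\Delta,\ba^t\bb^s)$ to a ray-by-ray integer inequality via floor/ceiling arithmetic, and identify the resulting system with the halfspace description of $t\Newt(\ba)+s\Newt(\bb)$ (using that the log resolution's fan refines the normal fan of the Minkowski sum). The only cosmetic difference is that you make the $\rho\notin E$ case explicit, whereas the paper folds it into a single formula $\lceil -(1-\varepsilon)+b_i\rceil$ for the relevant coefficient and notes that the round-up condition and the un-rounded condition agree on integers; in both treatments the rays outside $E$ contribute only constraints already implied by $v\in S$.
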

\begin{proof}
Choose a log resolution of $(X, \Delta, \ba, \bb)$, $\pi : X' \to X$. Set $\ba \cdot \O_{X'} = \O_{X'}(-G)$ and $\bb \cdot \O_{X'} = \O_{X'}(-H)$.
Write $K_{X'} - \pi^*(K_X + \Delta) - tG -sH= \sum a_i E_i$ and set $E$ to be the reduced divisor whose components are made up of $E_i$ such that the corresponding $a_i \leq -1$.
It immediately follows that $x^v \in \mJ'(X,\Delta,\ba^t\bb^s)$ if and only if
\[ \Div_{X'}(x^v) + \lceil K_{X'} - \pi^*(K_X+ \Delta) -t G -sH + \varepsilon E  \rceil\geq 0.
\]

We claim that this is true if and only if $\Div_{X'}(x^v)-\pi^*(K_X+\Delta) -t G -sH\geq 0$ and we reason as follows.

Let $D_i$ be a torus invariant divisor on $X$ and let $c_i$ be the coefficient of $\Div_{X'}(x^v)$ on $D_i$.
Further set $b_i$ to be the coefficient of $-\pi^*(K_X+\Delta) -t G -sH$ along $D_i$.
It follows that the coefficient of $\lceil K_{X'} - \pi^*(K_X+ \Delta) -t G  -sH + \varepsilon E  \rceil$ along $D_i$ is $\lceil -(1 - \varepsilon) + b_i \rceil$.
Thus our claim is simply that $c_i \geq -b_i$ if and only if $c_i \geq \lfloor - b_i + (1 - \varepsilon)  \rfloor$ for $1 \gg \varepsilon > 0$. But this is obvious.

Notice that the integral closure $\overline{a} = \pi_*\O_{X'}(-G) = \langle x^v \mid v \in \Newt(\ba) \rangle$ and $\overline{b} = \pi_*\O_{X'}(-H) = \langle x^v \mid v \in \Newt(\bb) \rangle$. If $t >0$ is a rational number, we see that $x^v \in \mJ'(X,\Delta, \ba^t\bb^s)$ if and only if $v - {m \over l} \in t\Newt(\ba) + s\Newt(\bb)$ as stated.
The general case is achieved by taking a limit.
\end{proof}

We now transition to characteristic $p > 0$. Let $X$ be a normal toric variety and $\Delta$ be a toric-invariant divisor such that $\Div_X(x^w) = (1-p^e)(K_X + \Delta)$ for some positive integer $l$ and some element $w\in M$. Denote the $p^{-e}$-linear map corresponding to $\Delta$ by $\phi_{\Delta}$, or simply $\phi$ when $\Delta$ is clear. Choose a rational number $t \geq 0$ without $p$ in its denominator. We will show that $\sI_Z(X, \Delta,\ba^t)$ coincides with the ideals fixed by the Cartier algebra $\mathcal{C}^{\phi,\ba^t}$ generated by $\phi$ and $\ba^t$, \ie
\[I\in \sI_Z(X, \Delta,\ba^t)\]
if and only if
\[\sum_{n>0}\phi^n(\overline{\ba^{t(p^{en}-1)}}\cdot I)=I,\]
where the sum is taken over all $n$ such that $t(p^{en}-1)$ is an integer.

Set $P=t\Newt(\ba)$ and $\sF=\{ {\rm faces\ of\ }P\}$. For $F\in \sF$, denote
\[I_F:=\langle x^v | v \in\ \relint(\frac{w}{1-p^e}+F')\cap S \ {\rm for\ some\ }F'\supseteq F\rangle.\]

\begin{theorem}
\label{fixed-ideals-are-adjoint-ideals}
Suppose that $(X, \Delta,\ba^t)$ is as above and that $\phi: F^e_*  \O_X \to \O_X$ corresponds to $\Delta$.
Then every $\mathcal{C}^{\phi, \ba^t}$-fixed ideal appears in the set $\sI_Z(X, \Delta,\ba^t)$.
\end{theorem}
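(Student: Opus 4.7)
By Theorem~\ref{cartier-algebra-fixed-ideal}, we may write any $\mathcal{C}^{\phi,\ba^t}$-fixed ideal as $I = \sum_{F \in \sG} I_F$ for some $\sG \subseteq \sF$. Let $\sG' = \{F' \in \sF : F' \supseteq F \text{ for some } F \in \sG\}$ denote the upward closure of $\sG$; then $I$ is generated by the monomials $x^v$ with $v \in \sigma \cap M$ satisfying $v - \tfrac{w}{1-p^e} \in \relint F'$ for some $F' \in \sG'$. Since $I$ is an honest ideal of $R$ rather than a fractional ideal, only those $F' \in \sG'$ for which $\relint(\tfrac{w}{1-p^e} + F') \cap (\sigma \cap M) \neq \emptyset$ actually contribute; let $\sG'' \subseteq \sG'$ be the resulting pruned sub-collection. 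The plan is to realize $I$ as a single element of $\sI_Z^\pi(X, \Delta, \ba^t)$ for a single toric log resolution $\pi$.

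First, I would take $\pi \colon X' \to X$ to be a smooth toric refinement of a common refinement of the fan $\Sigma_X$ of $X$ and the normal fan of $P := t\Newt(\ba)$, subdivided further so that, for each face $F' \in \sG''$, the fan of $X'$ contains a ray $\rho_{F'}$ in the relative interior of the normal cone to $F'$ in $P$, with $F_{\rho_{F'}} = F'$. Set $E := \sum_{F' \in \sG''}D_{\rho_{F'}}$. Two conditions of $\sI_Z^\pi$ must be verified: the integrality $a_{\rho_{F'}} \in \bZ$ of the discrepancy coefficient, and the image condition $\pi(E) \subseteq Z$. Integrality is automatic from the pruning: choosing any lattice point $v_0 \in \relint(\tfrac{w}{1-p^e}+F') \cap (\sigma \cap M)$, the identity $\langle v_0 - \tfrac{w}{1-p^e},\, u_{\rho_{F'}}\rangle = tg_{\rho_{F'}}$ (since $v_0 - \tfrac{w}{1-p^e} \in \relint F' = F_{\rho_{F'}}$) forces $\langle \tfrac{w}{1-p^e},\, u_{\rho_{F'}}\rangle + tg_{\rho_{F'}} \in \bZ$, so $a_{\rho_{F'}} = -1 - \langle \tfrac{w}{1-p^e},\, u_{\rho_{F'}}\rangle - tg_{\rho_{F'}} \in \bZ$.

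To establish $\pi(E) \subseteq Z$: if $\rho_{F'} \in \Sigma_X^{(1)}$ is non-exceptional, then $D_{\rho_{F'}}^X \subseteq \Supp\Delta \cup V(\ba)$, for otherwise the coefficient $c_{\rho_{F'}}$ of $\Delta$ and $g_{\rho_{F'}}$ would both be zero, giving $\langle \tfrac{w}{1-p^e}, u_{\rho_{F'}}\rangle = -1$ and hence $\langle v_0, u_{\rho_{F'}}\rangle = -1$, which contradicts $v_0 \in \sigma$. If instead $\rho_{F'}$ is exceptional, then $\pi(D_{\rho_{F'}}) = \overline{O_\tau^X}$ for the minimal cone $\tau \in \Sigma_X$ containing $\rho_{F'}$, and a parallel case analysis shows that $\overline{O_\tau^X} \subseteq \Sing X \cup \Supp \Delta \cup V(\ba)$; should the available $\rho_{F'}$ fail this condition for some $F'$, we subdivide the fan further to supply an alternative choice lying inside a cone of $\Sigma_X$ whose rays include a ray in $\Supp\Delta \cup V(\ba)$. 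Finally, I would verify the pushforward identity $\pi_*\O_{X'}(\lceil K_{X'} - \pi^*(K_X+\Delta) - tG + E\rceil) = I$ by a ray-by-ray Newton polytope computation analogous to the proof of Proposition~\ref{toricnonLC}: the inclusion of $D_{\rho_{F'}}$ in $E$ relaxes the strict inequality at $\rho_{F'}$ to a weak one, and this translates exactly into membership in $\bigcup_{F' \in \sG''} \relint(\tfrac{w}{1-p^e}+F')$ after intersecting with $\sigma \cap M$.

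The main obstacle will be the combinatorial bookkeeping in the verification $\pi(E) \subseteq Z$: one must show that the pruning step, which is forced upon us because $I$ is a genuine (non-fractional) ideal of $R$, is exactly matched to the constraint that the corresponding rays $\rho_{F'}$ project into $Z$. The key insight is that a ``bad'' ray $\rho$ (one with $D_\rho \not\subseteq Z$) sits on the hyperplane $\langle \cdot, u_\rho\rangle = -1$ after the shift by $\tfrac{w}{1-p^e}$, so any face lying on it is automatically pruned. A secondary subtlety is the simultaneous subdivision of the fan to supply rays $\rho_{F'}$ corresponding to every face of $\sG''$ while keeping the resolution smooth and $\pi^*(K_X+\Delta) + tG$ SNC.
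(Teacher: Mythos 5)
Your approach is genuinely different from the paper's: the paper uses Proposition~\ref{PropHowToConstructInterAdj} (that $\mJ'(X,\Delta,\ba^t\bb^\delta)\in\sI_Z(X,\Delta,\ba^t)$ for any auxiliary ideal $\bb$ and small $\delta>0$) together with Proposition~\ref{toricnonLC} to realize $\sum_{F\in\sG}I_F$ as $\mJ'(X,\Delta,\ba^t\bb^\delta)$ for a cleverly chosen monomial ideal $\bb=\langle x^{n_Fb_F}\mid F\in\sG\rangle$, thereby never having to engineer the resolution or the divisor $E$ explicitly. You instead build the resolution and $E$ by hand. Unfortunately, your $E$ is wrong, and the error is in precisely the step you deferred as ``a ray-by-ray Newton polytope computation.''

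The divisor $E=\sum_{F'\in\sG''}D_{\rho_{F'}}$, with \emph{one} ray per face, does not produce the correct pushforward. Fix $F_0\in\sG''$ and a lattice point $q=v-\tfrac{w}{1-p^e}\in\relint F_0$, $v\in S$. For \emph{every} ray $\rho$ of $\Sigma_{X'}$ lying in the (closed) normal cone $N_{F_0}$, one has $\langle q,u_\rho\rangle=tb_\rho$ (equality, not strict), and moreover $\theta_\rho:=\langle\tfrac{w}{1-p^e},u_\rho\rangle+tb_\rho=\langle v,u_\rho\rangle\in\bZ$. Consequently, if $\rho$ is not a component of $E$, the rounded-up coefficient forces the strict inequality $\langle v,u_\rho\rangle\geq 1+\theta_\rho$, which $v$ fails. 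So $x^v$ drops out of $\pi_*\O_{X'}(\lceil\cdots\rceil)$ whenever $N_{F_0}$ contains any ray of $\Sigma_{X'}$ other than your chosen $\rho_{F_0}$ --- which is unavoidable once $N_{F_0}$ has dimension $\geq 2$ and must be subdivided to make $X'$ smooth. Concretely, take $X=\bA^2$, $\Delta=\{x=0\}$, $\ba=(x^3,y)$, $t=1$, so $\tfrac{w}{1-p^e}=(0,-1)$, $P=\Newt(\ba)$, and the vertex $(0,1)$ of $P$ is unpruned. The normal cone $N_{(0,1)}=\langle(1,0),(1,3)\rangle$ forces interior rays $(1,1),(1,2)$ in any smooth $\Sigma_{X'}$. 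With $\sG''=\{(0,1),\text{vertical edge},P\}$ one has $I=R$, yet your $E=D_{(1,1)}+D_{(1,0)}$ yields $\pi_*\O_{X'}(\lceil\cdots\rceil)=(x,y)\neq R$, because $D_{(1,2)}$ (and $D_{(1,3)}$) were omitted. The fix is to put in $E$ \emph{every} ray $\rho$ of $\Sigma_{X'}$ lying in $\bigcup_{F'\in\sG''}N_{F'}$; the integrality $\theta_\rho\in\bZ$ and the condition $\pi(D_\rho)\subseteq Z$ then hold for all of them by the same arguments you sketched (and the opposite inclusion in the pushforward identity also goes through because rays of $\Sigma_{X'}$ whose minimizing face is not in $\sG'$ lie outside this union). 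There is also a degenerate case you did not address: when $F'=P\in\sG''$, the normal cone $N_P=\{0\}$ contains no ray, so $P$ simply contributes nothing to $E$; this is consistent, since $I_P$ is the multiplier ideal and is realized with $E=0$.
\par
In short, the strategy is salvageable but the key computational step, which you left unverified, contains a real gap; the paper sidesteps this bookkeeping entirely by routing through the auxiliary ideal $\bb$ and the already-established Propositions~\ref{PropHowToConstructInterAdj} and \ref{toricnonLC}.
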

\begin{proof}
We first show that $I_{F} \in \sI_Z(X, \Delta,\ba^t)$ for each $F \in \sF$. Take $a \in \relint(F)$, then $n'a \in \relint(F') \cap M$ for some positive integer $n'$ and some $F' \supseteq F$.

By Proposition \ref{PropHowToConstructInterAdj}, $\mJ'(X, \Delta+ {1 \over n} \Div_X(x^{n'a}),\ba^t) \in \sI_Z(X, \Delta,\ba^t)$ for sufficiently large $n \in \N$. Notice that \[ n(p^e-1)(K_X + \Delta + {1 \over n} \Div_X(x^{n'a})) = -n \Div_X(x^w) + (p^e-1) \Div_X(x^{n'a}) = \Div_X( x^{-nw+(p^e-1)n'a}).\]
By Proposition \ref{toricnonLC}, $\mJ'(X, \Delta+ {1 \over n} \Div_X(x^{n'a}),\ba^t)$ is generated by the monomials $x^v$ such that
\[
v +\frac{1}{p^e-1}w  - {n' \over n} a \in\ t\Newt(\ba).
\]
Therefore, $\mJ'(X, \Delta+ {1 \over n} \Div_X(x^{n'a}),\ba^t)$ coincides with the ideal $I_{F}$ for sufficiently large $n$.

To complete the proof, we must show that for any non-empty subset $\sG$ of $\sF$, the ideal \[ \sum_{F \in \sG} I_{F} \in \sI_Z(X,\Delta,\ba^t).\]
For each $F \in \sG$, pick a point $b_{F}$ in the relative interior of $F$.
Also pick a positive integer $n_F$ and a face $F'$ such that $F' \supseteq F$ and $n_F b_F \in \relint (F') \cap M$.

Consider the ideal
$\bb = \langle x^{n_F b_{F}} \mid F \in \sG \rangle$ with integral closure $\bar{\bb}$.
We claim that for $0 < \delta \ll 1$,
\[
\sum_{F \in \sG} I_{F} = \mJ'(X,\Delta,\ba^t\bb^{\delta}) = \mJ'(X,\Delta,\bar{\ba}^t\bar{\bb}^{\delta}).
\]
Choose a toric log resolution $\pi : X' \rightarrow X$ for $X,\Delta,\ba,\bb$ so that $\ba \cdot \O_{X'} = \O_{X'}(-G)$ and $\bb\cdot \O_{X'}=\O_{X'}(-H)$ for some Cartier divisors $G$ and $H$.
Since for each $F \in \sG$, we have $\Div_{X'}(x^{n_Fb_{F}}) \geq H$, it follows that
\[ \begin{aligned}
I_F &= \mJ'(X, \Delta + \delta \Div_X(x^{n_Fb_{F}}),\ba^t)  \\
            &= \pi_* \O_{X'}(\lceil  K_{X'} - \pi^*(K_X +\Delta)-tG -\delta \Div_{X'}(x^{n_Fb_{F}}) + \varepsilon E \rceil) \\
            &\subseteq \pi_* \O_{X'}(\lceil  K_{X'} - \pi^*(K_X +\Delta) - tG -\delta H + \varepsilon E \rceil) \\
	    &=\mJ'(X,\Delta,\ba^t\bb^{\delta}).
\end{aligned}
\]
Thus we have proven one containment.

Given a face $\gamma$ of $t\Newt(\ba)$, set $T_{\gamma} = \{ x^v | v \in \relint(\frac{w}{1-p^e} + \gamma) \cap S \}$.  By Proposition \ref{toricnonLC}, $\mJ'(X,\Delta,\ba^t\bb^{\delta}) \subseteq \langle x^v | v \in {w \over 1-p^e}+t \Newt(\ba) \rangle$. Therefore, in order to show that
$\mJ'(X,\Delta,\ba^t\bb^{\delta}) \subseteq \sum_{F \in \sG} I_F$,
 it suffices to show that, if $\mJ'(X,\Delta,\ba^t\bb^{\delta})
\cap T_{\gamma} \neq \emptyset$   for some face $\gamma$ of $t\Newt(\ba)$, then $\gamma$ contains some face $F \in \sG$ (\ie $T_{\gamma} \subseteq I_F$).
Let $x^v$ be a nonzero element in $\mJ'(X,\Delta,\ba^t\bb^{\delta}) \cap T_{\gamma} $, then
$v+ \frac{w}{p^e-1} \in \relint \gamma$ and by Proposition \ref{toricnonLC},
$v+ \frac{w}{p^e-1} \in t\Newt(\ba) + \delta\Newt(\bb)$.
Suppose $\gamma$ does not contain any faces in $\sG$.
Then $\gamma \neq t\Newt(\ba)$. Let $\sF_{\gamma}$ be the subset of $\sF$ consisting of all facets (maximal faces)
$\tau \in \sF \setminus \{ t\Newt(\ba) \}$ satisfying $\tau \supseteq \gamma$. For each $\tau \in \sF_{\gamma}$, fix  a non-zero linear functional
$f_{\tau}$ such that $f_{\tau}(\tau)=0$ and $f_{\tau}(t\Newt(\ba)) \geq 0$.
Then \[   \gamma = \left(\bigcap_{\tau \in \sF_{\gamma}} \{ u \in M_{\bR} | f_{\tau}(u) =0 \} \right) \cap (t\Newt \ba).\]
  In particular, $f_{\tau}(v+ {w \over p^e-1} )= 0 $ for all $\tau \in \sF_{\gamma}$.\\
On the other hand, recall that (Lemma \ref{propertiesNewt}) for any monomial ideal $I=\langle x^{g} |g \in \Gamma \rangle$ in $k[S]$
\[ \Newt(I)= \Conv{\Gamma} + \sigma \] where $\Conv{\Gamma}$ is the convex hull of $\Gamma$. Note also that $\xi \sigma = \sigma$ for any $\xi >0$.
Now, suppose $\ba = \langle x^g | g \in \Gamma_{\ba} \rangle$. Then
\[\begin{aligned}
t\Newt(\ba) + \delta \Newt{\bb}
&=t \Conv( \Gamma_{\ba})+ t \sigma
+ \delta \Conv \{ n_Fb_F | F \in \sG \} + \delta \sigma \\
&= t\Newt(\ba) + \delta \Conv \{ n_Fb_F | F \in \sG \}.
\end{aligned}
\]
Write $v+{w \over p^e-1} = \alpha + \delta \sum_{F \in \sG} m_F(n_Fb_F)$ where $\alpha \in t \Newt(\ba)$, $m_F \geq 0$, and $\sum m_F = 1$.
Pick an $F_0$ so that $m_{F_0} \neq 0$. Since $\gamma$ does not contain $F_0$, there exists $\tau_0 \in \sF_{\gamma}$ such that $f_{\tau_0}(b_{F_0}) >0$. In particular,
$f_{\tau_0}(v+{w \over p^e-1}) = f_{\tau_0}(\alpha) + \delta \sum_{F \in \sG} m_Fn_Ff_{\tau_0}(b_F) >0$. This is a contradiction.
\end{proof}

Our next goal is to prove that every element of $\sI_Z(X, \Delta,\ba^t)$ is $\mathcal{C}^{\phi, \ba^t}$-fixed.

\begin{theorem}
\label{adjoint-ideals-are-fixed}
Suppose that $(X, \Delta,\ba^t)$ is as above and that $\phi : F^e_*  \O_X \to \O_X$ corresponds to $\Delta$ as in Section \ref{secTestModulesonToricVarieties}.  Suppose further that $t\geq 0$ is a rational number such that $t(p^e - 1)$ is an integer.  If $J \in \sI_Z(X, \Delta,\ba^t)$, then $J$ is $\mathcal{C}^{\phi, \ba^t}$-fixed.
\end{theorem}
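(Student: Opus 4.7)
By the first lemma of this section, $J$ is a torus-invariant (monomial) ideal. By Theorem \ref{cartier-algebra-fixed-ideal}, it therefore suffices to exhibit a collection $\sG\subseteq\sF$ of faces of $t\Newt(\ba)$ such that $J = \sum_{F\in\sG} I_F$. My plan is to read off $\sG$ from a toric log resolution and check the equality inequation-by-inequation.

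\emph{Step 1 (Log resolution form).} Using Lemma \ref{lemAltCharInterAdj}, I would first write
\[
J \;=\; \pi_*\O_{X'}\bigl(\lceil K_{X'} - \pi^*(K_X+\Delta) - tG + \varepsilon E\rceil\bigr)
\]
for some toric log resolution $\pi:X'\to X$ refining the normal fan of $t\Newt(\ba)$, with $E$ an effective toric divisor, $\pi(E)\subseteq Z$, and $0<\varepsilon\ll 1$.

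\emph{Step 2 (Ray-by-ray inequalities).} For each ray $\rho$ of $X'$ with primitive generator $u_\rho$, set $A_\rho := \tfrac{\langle w,u_\rho\rangle}{1-p^e} + t\,h_\ba(u_\rho)$, where $h_\ba(u)=\min_{v\in\Newt(\ba)}\langle v,u\rangle$ is the support function. A direct computation of the coefficient of $D_\rho$ in $K_{X'}-\pi^*(K_X+\Delta)-tG+\varepsilon E$, together with the fact that $\langle v,u_\rho\rangle\in\bZ$ for $v\in M$, yields that $x^v\in J$ iff
\[
\langle v,u_\rho\rangle \;\geq\; 1 + \lfloor A_\rho\rfloor - [\rho\in T] \qquad\text{for all }\rho,
\]
where $T$ is the set of rays $\rho$ with $A_\rho\in\bZ$ and positive $E$-coefficient.

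\emph{Step 3 (Combinatorial translation).} Each $v\in M$ with $v-\tfrac{w}{1-p^e}\in t\Newt(\ba)$ lies in the relative interior of a unique face $F_v$; because the fan of $X'$ refines the normal fan of $t\Newt(\ba)$, the rays $u_\rho$ with $\langle v,u_\rho\rangle = A_\rho$ are precisely those in the normal cone $N(F_v)$, and for the other rays $\langle v,u_\rho\rangle\geq \lfloor A_\rho\rfloor + 1$ by integrality of the pairing. Substituting into the inequalities from Step 2, the condition $x^v\in J$ becomes the purely combinatorial condition that every ray $\rho$ with $u_\rho\in N(F_v)$ lies in $T$. (Observe that $\relint(\tfrac{w}{1-p^e}+F_v)\cap M\neq\emptyset$ forces $A_\rho\in\bZ$ for $u_\rho\in N(F_v)$, so this makes sense.)

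\emph{Step 4 (Identifying $\sG$).} Set $\sG:=\{F_v : x^v\in J\}$. The inclusion $J\subseteq\sum_{F\in\sG}I_F$ is immediate from the definition of $F_v$. For the reverse inclusion, fix $F\in\sG$ via some $x^v\in J$ and pick any $v'\in\relint(\tfrac{w}{1-p^e}+F')\cap S$ with $F'\supseteq F$. Then $N(F')\subseteq N(F)$; by Step 3 applied to $v$, every ray in $N(F)$ (hence in $N(F')$) lies in $T$; so Step 3 applied to $v'$ gives $x^{v'}\in J$, and thus $I_F\subseteq J$.

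The main obstacle is Step 3, where one must handle the round-ups carefully and check that the combinatorial condition is well-posed (\ie that the fan of $X'$ is fine enough to see each normal cone $N(F)$ and that $A_\rho$ is an integer at every ``tight'' ray). Once this book-keeping is in place, the monotonicity of normal cones under face inclusion makes Step 4 essentially formal.
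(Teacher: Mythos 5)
Your proof is correct, but it takes a genuinely different route from the paper's. The paper's argument is entirely self-contained: it fixes the log-resolution description $J = \mathcal J(E) = \pi_*\O_{X'}(\lceil K_{X'} - \pi^*(K_X+\Delta) - tG + \varepsilon E\rceil)$, and then verifies directly that $\sum_{n>0}\phi^n(\overline{\ba^{t(p^{en}-1)}}\mathcal J(E))=\mathcal J(E)$ by two explicit valuation computations on $X'$ --- one showing the left side is $\subseteq\mathcal J(E)$ via the inequality $a_i+tb_i+\lceil -1-a_i-tb_i+\delta_{i,\varepsilon}\rceil\geq -1$, and one exhibiting, for each $x^v\in\mathcal J(E)$, the element $z=x^{(p^{en}-1)v+\frac{p^{en}-1}{p^e-1}w}\cdot x^v$ and checking that $x^{(p^{en}-1)v+\frac{p^{en}-1}{p^e-1}w}\in\overline{\ba^{t(p^{en}-1)}}$. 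It never invokes Theorem~\ref{cartier-algebra-fixed-ideal}.

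You instead reduce to the combinatorial classification in Theorem~\ref{cartier-algebra-fixed-ideal}: after using the torus-invariance lemma and the alternative description of $\sI^{\pi}_Z$, you translate membership $x^v\in J$ into ray-by-ray inequalities $\langle v,u_\rho\rangle\geq 1+\lfloor A_\rho\rfloor-[\rho\in T]$, and then observe that these collapse to the single normal-fan condition ``every ray in $N(F_v)$ lies in $T$.'' The round-up bookkeeping you flag in Step 3 is in fact fine: for $u_\rho\in N(F_v)$ the pairing $\langle v,u_\rho\rangle$ equals $A_\rho$ and is an integer, so the inequality forces $\rho\in T$; for $u_\rho\notin N(F_v)$ one has $\langle v,u_\rho\rangle\geq\lfloor A_\rho\rfloor+1$ (using integrality when $A_\rho\in\bZ$ and the ceiling otherwise), so the constraint is automatic. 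Anti-monotonicity of normal cones then gives Step 4.

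What each approach buys: the paper's computation is elementary and, because it produces an explicit preimage under $\phi^n$, it is constructive and independent of the classification theorem. Your argument is more structural --- by expressing $J$ directly as a sum $\sum_{F\in\sG}I_F$, it makes transparent \emph{why} the faces of $t\Newt(\ba)$ (equivalently, the cones of its normal fan, as seen by the rays of $X'$ sitting in $T$) are exactly the right combinatorial data, and it effectively subsumes the content of Theorem~\ref{fixed-ideals-are-adjoint-ideals} as well, giving a cleaner single route to the equality $\sI_Z(X,\Delta,\ba^t)=\{\sum_{F\in\sG}I_F : \sG\subseteq\sF\}$. The price is that you use Theorem~\ref{cartier-algebra-fixed-ideal} as a black box, whereas the paper's proof of this theorem is logically independent of it. Two minor points to watch: (a) you should note explicitly that one may refine $\pi$ (Lemma~\ref{LemPropertiesOfInterAdj}(iv)) so that the fan of $X'$ refines the normal fan of $\Newt(\ba)$ --- this is needed for Step 3, though it is also implicit in ``$\ba\cdot\O_{X'}$ is invertible''; and (b) if $J=0$ then $\sG=\emptyset$, which the statement of Theorem~\ref{cartier-algebra-fixed-ideal} nominally excludes, though the zero ideal is trivially $\mathcal{C}^{\phi,\ba^t}$-fixed, so this is harmless.
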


\renewcommand{\chi}{x}

\begin{proof}
Let $\mathcal{J}(E)$ denote $\pi_*\O_{X'}(\lceil K_{X'}-\pi^*(K_X+\Delta)- tG +\varepsilon E\rceil)$. First we show that $\mathcal{J}(E)$ is $\mathcal{C}^{\phi, \ba^t}$-stable, \ie $\sum_{n>0} \phi^n(\overline{\ba^{t(p^{en}-1)}}\mathcal{J}(E))\subseteq \mathcal{J}(E)$. It suffices to show that $\phi^n(x^u\cdot \mathcal{J}(E))\subseteq \mathcal{J}(E)$ for all $x^u\in \overline{\ba^{t(p^{en}-1)}}$.

Write $\frac{1}{1-p^e}\Div_{X'}(\chi^w)=\pi^*(K_X+\Delta)=\sum_ia_iD_i$, where $\{D_i\}$ is the set of toric prime Weil divisors on $X'$ and let $v_i$ be the first lattice point on the ray associated with $D_i$. Hence

\begin{equation}
\label{coefficients-w}
\langle w,v_i\rangle =(1-p^e)a_i.
\end{equation}

Write $G=\sum_ib_iD_i$. Set
\[\delta_{i,\varepsilon}=\begin{cases} \varepsilon &{\rm when}\ D_i{\rm  is\ a\ component\ of}E\\ 0 &{\rm otherwise}. \end{cases}\]
Since $K_{X'}=-\sum_iD_i$, we can write
\[\lceil K_{X'}-\pi^*(K_X+\Delta)- tG +\varepsilon E\rceil=\sum_i\lceil -1-a_i-tb_i+\delta_{i,\varepsilon}\rceil D_i .\]
Note that
\begin{equation}
\label{coefficients-JE}
a_i+tb_i+\lceil -1-a_i-tb_i+\delta_{i,\varepsilon}\rceil \geq -1
\end{equation}

Let $x^v$ be an arbitrary element of $\mathcal{J}(E)$. We wish to prove that $\phi^n(x^u\cdot x^v)$ is in $\mathcal{J}(E)$.  It is clear that \[\phi^n(x^u\cdot x^v)=\begin{cases} x^{\frac{u+v-\frac{p^{en}-1}{p^e-1}w}{p^{en}}} & {\rm if}\ \frac{u+v-\frac{p^{en}-1}{p^e-1}w}{p^{en}}\in M \\ 0 & {\rm otherwise}.\end{cases}\]
If $\phi^n(x^u\cdot x^v)=0$, then clearly $\phi^n(x^u\cdot x^v)\in \mathcal{J}(E)$. Assume that $\widetilde{u}:=\frac{u+v-\frac{p^{en}-1}{p^e-1}w}{p^{en}}$ is in $M$. Then we must have $\langle \tilde{u}, v_i\rangle \in \mathbb{Z}$ for each $v_i$. Now we have
\begin{align}
 & \langle \widetilde{u}, v_i \rangle + \lceil -1-a_i-tb_i+\delta_{i,\varepsilon}\rceil \notag\\
 &=\frac{1}{p^{en}}\langle u,v_i\rangle + \frac{1}{p^{en}}\langle v,v_i\rangle -\frac{1}{p^{en}}\left\langle \frac{p^{en}-1}{p^e-1}w,v_i\right\rangle + \lceil -1-a_i-tb_i+\delta_{i,\varepsilon}\rceil \notag\\
 &\geq \frac{1}{p^{en}}t(p^{en}-1)b_i - \frac{1}{p^{en}} \lceil -1-a_i-tb_i+\delta_{i,\varepsilon}\rceil + \frac{1}{p^{en}}(p^{en}-1)a_i + \lceil -1-a_i-tb_i+\delta_{i,\varepsilon}\rceil \notag\\
& = \frac{p^{en}-1}{p^{en}}(tb_i+a_i+\lceil -1-a_i-tb_i+\delta_{i,\varepsilon}\rceil)\notag\\
& \geq  \frac{p^{en}-1}{p^{en}}\cdot(-1),\ {\rm by\,} (\ref{coefficients-JE})\notag\\
&> -1\notag
\end{align}
But we already saw that $\langle \widetilde{u}, v_i\rangle \in \mathbb{Z}$ and hence $ \langle \widetilde{u}, v_i \rangle + \lceil -1-a_i-tb_i+\delta_{i,\varepsilon}\rceil $ must be an integer, therefore we must have
\[ \langle \widetilde{u}, v_i \rangle + \lceil -1-a_i-tb_i+\delta_{i,\varepsilon}\rceil \geq 0\]
i.e.
\[\Div_{X'}(\phi^n(x^u\cdot x^v))+\lceil K_{X'}-\pi^*(K_X+\Delta)- tG +\varepsilon E\rceil \geq 0.\]
So we have $\phi^n(x^u\cdot x^v)\in \mathcal{J}(E)$ as desired. This completes the proof that
\[
\sum_{n>0} \phi^n(\overline{\ba^{t(p^{en}-1)}}\mathcal{J}(E))\subseteq \mathcal{J}(E).
\]

Next we prove that, given any $x^v\in \mathcal{J}(E)$, there is an integer $n$ and an element $z\in \overline{\ba^{t(p^{en}-1)}}\cdot \mathcal{J}(E)$ such that $\phi^n_{\Delta}(z)=x^v$. Set \[z=x^{(p^{en}-1)v+\frac{p^{en}-1}{p^e-1}w}\cdot x^v.\]
It is clear that $\phi^n(z)=x^v$ and $x^v$ is already in $\mathcal{J}(E)$. It remains to show that $x^{(p^{en}-1)v+\frac{p^{en}-1}{p^e-1}w}$ is in $\overline{\ba^{t(p^{en}-1)}}=\pi_* \O_{X'}(-t(p^{en}-1)G)$.

It is clear we have that
\begin{align}
& \Div_{X'}(x^{(p^{en}-1)v+\frac{p^{en}-1}{p^e-1}w})\notag\\
&=\Div_{X'}(x^{(p^{en}-1)v})+\Div_{X'}(x^{\frac{p^{en}-1}{p^e-1}w})\notag\\
&=\Div_{X'}(x^{(p^{en}-1)v})+\sum_i \langle \frac{p^{en}-1}{p^e-1}w,v_i\rangle D_i \notag\\
&=\Div_{X'}(x^{(p^{en}-1)v})  - (p^{en}-1)\sum_ia_iD_i,{\rm because\ of\ }(\ref{coefficients-w})\notag\\
&\geq - (p^{en}-1)\lceil \left(\sum_i(-1-a_i-tb_i)D_i\right)+\varepsilon E\rceil -(p^{en}-1)\sum_ia_iD_i,{\rm\, since\ }x^v\in \mathcal{J}(E)\notag\\
&= - (p^{en}-1)\lceil \sum_i(-1-a_i-tb_i+\delta_{i,\varepsilon})D_i\rceil -(p^{en}-1)\sum_ia_iD_i\notag\\
&=(p^{en}-1)\sum_i(-\lceil -1-a_i-tb_i+\delta_{i,\varepsilon}\rceil -a_i)D_i.\notag
\end{align}

Since it is easy to check that $-\lceil -1-a_i-tb_i+\delta_{i,\varepsilon}\rceil -a_i\geq tb_i$ for $0<\varepsilon \ll1$, we have $\Div_{X'}(x^{(p^{en}-1)v+\frac{p^{en}-1}{p^e-1}w})\in \O_{X'}(-t(p^{en}-1)G)$ as desired. This finishes the proof of our theorem.
\end{proof}

\begin{corollary}
Suppose that $X$ is an affine toric variety in characteristic zero or characteristic $p > 0$, that $\ba$ is a toric ideal of $\O_X$ and $\Delta$ is an effective toric $\bQ$-divisor such that $K_X + \Delta$ is $\bQ$-Cartier so that $l(K_X + \Delta) = \Div(x^m)$ for some integer $l > 0$.  Set $\sF$ to be the set of all faces of $t\Newt(\ba)$ and for any $\tau \in \sF$, set \[ K_{\tau} = \langle x^v | v \in \relint( {m \over l} + \tau )\cap S\rangle .\]  Then a non-zero ideal $I \subseteq k[S]$ is in $\sJ_Z(X, \Delta,\ba^t)$ if and only if there exists some subset $\sG \subseteq \sF$ such that
\[I = \sum_{\tau \in \sG} \left( \sum_{\tau \subseteq \tau' \text{ in } \sF} K_{\tau'} \right)\]
\end{corollary}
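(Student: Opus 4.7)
The plan is to combine Theorems~\ref{fixed-ideals-are-adjoint-ideals}, \ref{adjoint-ideals-are-fixed}, and \ref{cartier-algebra-fixed-ideal} in a carefully chosen positive characteristic, and then transfer the conclusion to arbitrary characteristic using the fact that $\sI_Z(X,\Delta,\ba^t)$ depends only on the combinatorial data $(\sigma,\Delta,\ba,t)$. First I would pick a prime $p$ dividing neither $l$ nor the denominator of $t$, then choose $e \geq 1$ with $l \mid (p^e-1)$ and $t(p^e-1) \in \bZ$; such an $e$ exists by elementary number theory. Setting $w := \tfrac{1-p^e}{l}m \in M$ gives $(1-p^e)(K_X+\Delta) = \Div_X(x^w)$ and, crucially, $\tfrac{w}{1-p^e} = \tfrac{m}{l}$, so with $\phi = \phi_\Delta$ the associated $p^{-e}$-linear map, the ideal $I_F$ from Theorem~\ref{cartier-algebra-fixed-ideal} is precisely $\sum_{F \subseteq F' \in \sF} K_{F'}$ as in the corollary.

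Next I would chain the three theorems in this characteristic $p$ setting. Theorem~\ref{adjoint-ideals-are-fixed} (which requires $t(p^e-1) \in \bZ$) yields the inclusion $\sI_Z(X,\Delta,\ba^t) \subseteq \{\mathcal{C}^{\phi,\ba^t}\text{-fixed ideals}\}$; Theorem~\ref{fixed-ideals-are-adjoint-ideals} supplies the reverse inclusion; and Theorem~\ref{cartier-algebra-fixed-ideal} identifies the $\mathcal{C}^{\phi,\ba^t}$-fixed ideals as the sums $\sum_{F \in \sG} I_F$ with $\sG \subseteq \sF$ non-empty. Composing these gives the corollary in this particular characteristic $p$.

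To finish, I would verify that both sides of the equality are specified by the combinatorics of $(\sigma,\Delta,\ba,t)$ alone, independent of the characteristic of $k$. An element of $\sI_Z(X,\Delta,\ba^t)$ has the form $\pi_*\O_{X'}\bigl(\lceil K_{X'} - \pi^*(K_X+\Delta) - tG + \varepsilon E \rceil\bigr)$ for a toric log resolution $\pi$ (obtained from a regular subdivision of the fan, available in any characteristic) and a reduced torus-invariant divisor $E$ with $\pi(E) \subseteq Z$; the coefficients of $\pi^*(K_X+\Delta)$, $G$, and $E$ along the toric primes of $X'$, together with the rounding operation, produce a system of linear inequalities on $v \in M$ that involves neither $k$ nor its characteristic, and any two toric log resolutions admit a common toric refinement under which the resulting ideal is preserved by Lemma~\ref{LemPropertiesOfInterAdj}(iv). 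The sums $\sum_{\tau \in \sG} \sum_{\tau \subseteq \tau'} K_{\tau'}$ are likewise purely combinatorial. Hence the equality of these two collections of monomial ideals, proved above in characteristic $p$, propagates to every characteristic. The main obstacle is this characteristic-independence step: one must be careful that the combinatorial data really does determine the monomial ideals appearing in $\sI_Z$ in a uniform way, which is where Lemma~\ref{LemPropertiesOfInterAdj}(iv),(v) and the existence of toric resolutions in arbitrary characteristic are doing the essential work.
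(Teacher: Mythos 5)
Your argument is essentially the same as the paper's: identify $\frac{w}{1-p^e} = \frac{m}{l}$ so that $I_F = \sum_{F' \supseteq F} K_{F'}$, then chain Theorems~\ref{adjoint-ideals-are-fixed}, \ref{fixed-ideals-are-adjoint-ideals}, and \ref{cartier-algebra-fixed-ideal} in positive characteristic, and transfer to characteristic zero. Your version is somewhat more explicit than the paper's one-line ``reduction to characteristic $p \gg 0$'': you spell out the choice of $p$ and $e$ via elementary number theory, and you replace the general spreading-out machinery with the direct observation that for toric varieties both $\sI_Z(X,\Delta,\ba^t)$ (computed via toric log resolutions, justified by Lemma~\ref{LemPropertiesOfInterAdj}(iv),(v) and the torus-invariance lemma) and the combinatorial sum $\sum_{\tau \in \sG}\sum_{\tau \subseteq \tau'} K_{\tau'}$ are specified by $(\sigma,\Delta,\ba,t)$ independently of $k$; this is a perfectly valid and arguably cleaner way to phrase the characteristic-transfer step, but it is doing the same work and does not constitute a genuinely different route.
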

\begin{proof}
The statement in characteristic zero reduces to the characteristic $p > 0$ statement from reduction to characteristic $p \gg 0$.
For the characteristic $p > 0$ statement, suppose that $l(K_X + \Delta) = \Div(x^m)$ in characteristic $p \gg 0$.
Then if $\phi_{\Delta}(\blank) = \phi_c(x^{-w} \cdot \blank)$, we already saw that $\Delta = (-K_X) + {1 \over p^e - 1} \Div_X(x^{-w})$, thus $\Div(x^{-w}) = (p^e - 1)(K_X + \Delta)$ so that ${w \over 1 - p^e} = {m \over l}$ and the result follows
by Theorems \ref{cartier-algebra-fixed-ideal}, \ref{fixed-ideals-are-adjoint-ideals}, and \ref{adjoint-ideals-are-fixed}.
\end{proof}

\begin{corollary}
Suppose that $X$ is an affine toric variety in characteristic zero or characteristic $p > 0$ and that $\Delta$ is an effective toric $\bQ$-divisor such that $K_X + \Delta$ is $\bQ$-Cartier.  Then the elements of $\sJ_Z(X, \Delta,\ba^t)$ are closed under summation.
\end{corollary}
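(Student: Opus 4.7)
The plan is to leverage the immediately preceding corollary, which gives an explicit combinatorial parametrization of the ideals in $\sJ_Z(X,\Delta,\ba^t)$ by subsets $\sG \subseteq \sF$ of the face set of $t\Newt(\ba)$, via the formula
\[
I_{\sG} \;=\; \sum_{\tau \in \sG} \Bigl( \sum_{\tau \subseteq \tau' \text{ in } \sF} K_{\tau'} \Bigr).
\]
Given this, closure under summation is a purely formal observation: if $I \leftrightarrow \sG_1$ and $J \leftrightarrow \sG_2$, then $I + J \leftrightarrow \sG_1 \cup \sG_2$, which is again a subset of $\sF$. So the heart of the argument is already packaged in the previous corollary, and I only need to verify that the map $\sG \mapsto I_{\sG}$ is additive with respect to union in $\sG$. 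This is immediate from the definition.

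For a cleaner presentation, I would split into the two characteristic cases. In characteristic $p>0$ I would additionally note a second, more direct route: by Theorems~\ref{fixed-ideals-are-adjoint-ideals} and \ref{adjoint-ideals-are-fixed}, the set $\sJ_Z(X,\Delta,\ba^t)$ coincides with the set of $\mathcal{C}^{\phi_\Delta,\ba^t}$-fixed ideals, and Proposition~\ref{propPropertiesOfFixedIdeals}(i) says the latter is closed under sums (this part is tautological from the definition of being $\mathcal{C}^{\phi_\Delta,\ba^t}$-fixed, since $\mathcal{C}^{\phi_\Delta,\ba^t}_+(I+J) = \mathcal{C}^{\phi_\Delta,\ba^t}_+(I) + \mathcal{C}^{\phi_\Delta,\ba^t}_+(J) = I + J$). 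So in the positive-characteristic case there is nothing to do beyond citing these results.

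For characteristic zero I would argue exactly as in the proof of the preceding corollary: reduce to characteristic $p \gg 0$ (noting that $\sJ_Z(X,\Delta,\ba^t)$ is defined via a toric log resolution, and its combinatorial description from the preceding corollary depends only on $\Newt(\ba)$ and on the point $m/l$ determined by $l(K_X+\Delta) = \Div(x^m)$, neither of which involves the characteristic). Hence the characteristic-zero statement follows verbatim from the characteristic $p>0$ case.

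There is no real obstacle here: the only thing to be careful about is that the combinatorial parametrization from the previous corollary genuinely captures \emph{every} ideal in $\sJ_Z(X,\Delta,\ba^t)$ (which it does, as an ``if and only if'' statement), so that the sum of two such parametrized ideals is automatically of the same parametrized form. Thus the corollary is essentially a one-line consequence of the results already established.
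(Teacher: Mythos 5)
Your proposal is correct and matches the paper's intent: the corollary is stated without proof precisely because it is an immediate consequence of the preceding corollary's parametrization of $\sJ_Z(X,\Delta,\ba^t)$ by subsets $\sG\subseteq\sF$, under which $I_{\sG_1}+I_{\sG_2}=I_{\sG_1\cup\sG_2}$, together with reduction mod $p$ in characteristic zero. The alternate route you mention in characteristic $p>0$ (identifying $\sJ_Z$ with the $\mathcal{C}^{\phi_\Delta,\ba^t}$-fixed ideals via Theorems~\ref{fixed-ideals-are-adjoint-ideals} and \ref{adjoint-ideals-are-fixed} and then invoking Proposition~\ref{propPropertiesOfFixedIdeals}(i)) is also valid and is essentially the same content repackaged.
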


\bibliographystyle{skalpha}
\bibliography{CommonBib}

\end{document}